\documentclass[11pt,reqno]{amsart}
\usepackage[T1]{fontenc}
\usepackage{lmodern}
\usepackage{amsmath,amsfonts,amssymb,amsthm,mathtools,booktabs}
\usepackage{microtype}
\usepackage[colorlinks=true,urlcolor=blue,citecolor=red,linkcolor=blue]{hyperref}
\newtheorem{theorem}{Theorem}[section]
\newtheorem{lemma}[theorem]{Lemma}
\newtheorem{proposition}[theorem]{Proposition}
\newtheorem{corollary}[theorem]{Corollary}
\theoremstyle{definition}

\numberwithin{equation}{section}

\newcommand{\Gal}{\operatorname{Gal}}
\newcommand{\eps}{\varepsilon}
\newcommand{\Lam}{\Lambda'}
\newcommand{\Q}{\mathbb Q}
\newcommand{\Z}{\mathbb Z}

\newcommand{\F}{\mathbb F}
\newcommand{\C}{\mathbb C}
\newcommand{\OO}{\mathcal O}
\newcommand{\HTP}{\operatorname{HTP}}
\newcommand{\Nm}{\operatorname{N}}
\newcommand{\Tr}{\operatorname{Tr}}
\newcommand{\disc}{\operatorname{disc}}
\newcommand{\lcm}{\operatorname{lcm}}

\newcommand{\Sq}{\operatorname{Sq}}
\newcommand{\Supp}{\operatorname{Supp}}
\newcommand{\im}{\operatorname{Im}}

\def\l{\left}
\def\r{\right}
\def\bg{\bigg}
\def\({\bg(}
\def\){\bg)}
\def\t{\text}
\def\f{\frac}
\def\mo{{\rm{mod}\ }}
\def\pmod#1{\ (\mo\ #1)}

\def\lcm{\rm{lcm}}

\def\ls{\leq}

\def\sm{\setminus}

\begin{document}
\hbox{Preprint, {\tt arXiv:2610.20691}}
\medskip

\title[86 unknowns over the rationals]{86 unknowns for Hilbert's tenth problem over the rationals}
\author[Zhi-Wei Sun]{Zhi-Wei Sun}

\address{School of Mathematics, Nanjing
University, Nanjing 210093, People's Republic of China}
\email{zwsun@nju.edu.cn}

\keywords{Hilbert's tenth problem, rational solutions, bounded variables, elliptic curves, polynomial patterns in primes.
\newline \indent 2020 {\it Mathematics Subject Classification}. Primary 11U05, 03D35; Secondary 11D99, 11G05, 11N32, 11N36.
\newline \indent
Supported by the National Natural Science Foundation of China (grant no. 12371004).}

\begin{abstract} Recently, Hilbert's Tenth Problem over the field $\mathbb Q$ of rational numbers was solved negatively by OpenAI based on a five-contact height estimate and a pole-parity construction. In this paper, we prove further that there is no algorithm to decide for any polynomial $P(x_1,\ldots,x_{86})$ with integer coefficients whether the equation $P(x_1,\ldots,x_{86})=0$ has solutions with $x_1,\ldots,x_{86}\in\mathbb Q$. 
\end{abstract}
\maketitle

\section{Introduction}

Hilbert's tenth problem (HTP in short), posed in 1900, asks for an algorithm to decide for any polynomial equation
$$P(x_1,\ldots,x_n)=0$$
with integer coefficients whether it has integer solutions. This was finally solved negatively by
 Matiyasevich \cite{Mat} in 1970, based on the important work of Davis, Putnam and Robinson \cite{DPR} in 1961.
 
 For a ring $R$ containing $\Z$ and a fixed positive integer $n$, let $\HTP_n(R)$ be the problem of deciding whether an input polynomial in $\Z[x_1,\ldots,x_n]$ has a zero in $R^n$. 
Sun \cite{Sun11} proved that $\HTP_{11}(\Z)$ is undecidable, and recently 
 he \cite[Corollary 1.1]{Sun10} established the undecidability of $\HTP_{10}(\Z)$.

Let $\mathbb Q$ be the field of rational numbers.In 1949, Robinsion \cite{Rob} gave a 
first-order definition of $\Z$ in $\Q$ via the theory of ternary quadratic forms.
This was simplified by Poonen  \cite{Poonen} in 2009.
In 2016, Koenigsmann \cite{Koen} found that $\Q\sm \Z$ is diophantine over $\Q$, i.e.,
there is a polynomial $f(x_0,x_1,\ldots,x_n)$ with integer coefficients such that  $t\in\Q$ is not an integer if and only if $f(t,x_1,\ldots,x_n)=0$ for some $x_1,\ldots,x_n\in\Q$.
In 2024 Daans \cite{Daans} proved that we may take $n=10$; recently Sun \cite{Sun7}
showed further that we may even take $n=7$. If $\Z$ is diophantine over $\Q$, then
the undecidability of  HTP over $\Q$ follows from Matiyasevich's theorem on HTP over $\Z$.

Quite recently, OpenAI released a preprint \cite{OAI} in which HTP over $\Q$ is proved to be undecidable. The proof uses a five-contact height estimate and an existential predicate excluding odd poles.  We will state precisely the arithmetic inputs used there in Section~\ref{sec:inputs}.

In this paper, we go further by establishing the following result.

\begin{theorem}\label{thm:main} $\HTP_{86}(\Q)$ is undecidable.
\end{theorem}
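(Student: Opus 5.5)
The plan is to reduce $\HTP_{10}(\Z)$, which is undecidable by \cite[Corollary 1.1]{Sun10}, to $\HTP_{86}(\Q)$. We take the arithmetic inputs of \cite{OAI} that will be recorded in Section~\ref{sec:inputs} as given: the five-contact height estimate and the existential predicate excluding odd poles. From them we extract a diophantine-over-$\Q$ description of a set $S\subseteq\Q$ with two properties. First, $S$ contains every integer, or at least a set of integers rich enough to encode any integer unknown, say all squares or all nonnegative integers. Second, every rational solution of the resulting system forces the relevant unknowns to be integers. We do not try to show that $\Z$ itself is diophantine over $\Q$. It suffices to have a uniform family $P_S(t,y_1,\ldots,y_m)$ such that $P(x_1,\ldots,x_{10})=0$ has an integer solution if and only if
\[
P(x_1,\ldots,x_{10})^2+\sum_{i=1}^{10}P_S(x_i,y_{i,1},\ldots,y_{i,m})^2=0
\]
has a rational solution. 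Since a sum of squares over $\Q$ vanishes only when each term vanishes, this combines the constraints into a single equation without adding unknowns.

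The naive count is $10+10m$ unknowns, which is too large unless $m\le 7$. The main saving comes from not applying the integrality predicate to each variable separately. Instead we apply a single copy of the pole-parity predicate to one auxiliary quantity that simultaneously controls the denominators of all $x_i$. Concretely, I would encode integrality of $x_1,\ldots,x_{10}$ as integrality of a single quantity, for example $x_i=u_i$ with $\sum_i u_i^2 t^{2i}$ packaged in a form whose poles are controlled. Alternatively, I would first pass to a Matiyasevich-style representation in which all unknowns are nonnegative and bounded by one large unknown, so that only a bounded number of "integer tests" are needed. Each test consists of a point on the fixed elliptic curve from \cite{OAI}, needing two coordinates, plus the auxiliary unknowns for the five contacts and the parity condition. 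I would then count carefully: the unknowns for the elliptic-curve points, the unknowns witnessing local conditions (each norm or quadratic-form condition costs about $3$ unknowns), and the original $10$. The bookkeeping should be arranged so that the total is exactly $86$. Where possible, unknowns are shared across the five contacts, using the same multiplier for several contact conditions.

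The main obstacle is the economical conversion of the OpenAI construction into an explicit polynomial with few variables. That construction is stated qualitatively, as a first-order existential predicate, and each logical step, such as "$a\ne0$", disjunctions, and "$x$ has no odd pole at $p$", costs extra unknowns when made diophantine. For inequalities I would use the standard trick $a\ne 0\iff \exists b\,(ab=1)$. For disjunctions I would use products of polynomials. For positivity over $\Q$ I would use sums of four squares in the numerator, implemented as Lagrange's theorem applied to $x\cdot(\text{square})$, which needs $4$ unknowns. Because these costs compound, I would first write the predicate of \cite{OAI} as a conjunction of the smallest possible number of primitive conditions and then eliminate redundant witnesses. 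Finally, I would verify soundness in both directions. For completeness, given integer solutions we must produce rational witnesses, which uses the existence part of the height estimate. For soundness, rational witnesses must force integrality, which uses the no-odd-pole predicate together with the reduction to one controlled denominator.
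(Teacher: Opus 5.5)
There is a genuine gap at the foundation. Despite your disclaimer, you need $\Z$ itself to be diophantine over $\Q$. Suppose a uniform family $P_S$ made ``$P(x_1,\dots,x_{10})=0$ has an integer zero iff the sum-of-squares system has a rational solution'' true for every $P$. Then $S=\Z$: the test polynomials $x_1-a$ and $bx_1-a$ (with $\gcd(a,b)=1$, $b\ge2$) show $\Z\subseteq S$ and $a/b\notin S$. Your variants with squares or nonnegative integers also give $\Z$, after Lagrange's theorem and differences.

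A diophantine definition of $\Z$ in $\Q$ is the open problem recalled in the introduction. It is expected to be false: Mazur's conjecture on the real topology of rational points implies that no infinite subset of $\Z$ is diophantine over $\Q$.

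The inputs of Section~\ref{sec:inputs} do not supply it either.
The predicate $\Phi_0(B)$ only excludes negative \emph{odd} valuations at primes outside $S_0$. Even poles and poles at $S_0$ survive, so no single auxiliary quantity fed to it can certify integrality of $x_1,\dots,x_{10}$. Its completeness is moreover only established for small positive $B\in\mathcal D_2$ with large valuations at $S_0$ (Proposition~\ref{prop:phi}).
The height estimate of Lemma~\ref{lem:heightchange} is an upper bound used for soundness. It has no ``existence part'' that could give completeness.

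The missing idea is that no integrality test is ever made.
\begin{enumerate}
\item The integers are the indices $n_i$ of cyclic points $P_i=n_iP$ in $mE(F)=\Z P$ on the rank-one curve over $\Q(\sqrt2)$. Each point costs two rational coordinates, and there are no source unknowns $x_i$ at all.
\item The equation is tested only locally. Through Lemma~\ref{lem:packing}, the pole certificates at the slopes $t_r=A_rs$ act at each prime of the odd five-contact radical. There they force $L_i\equiv L'_i$ and $f(\mathbf L)\equiv0$ modulo $p^K\OO_{\mathcal P}$ for two anchored truncated-logarithm ratios. Hence $L_i\equiv n_i$ and $M_{S_K}(t_r)^K\mid f(\mathbf n)$ (Lemma~\ref{lem:local}).
\item If $f(\mathbf n)\ne0$, take $K=c\deg f$. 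The height estimate then bounds $h(s)$ and $h(A_rs)$, hence $h(x(P_i))$, linearly in $\max|n_i|$. Since $h(x(n_iP))=2n_i^2\widehat h(P)+O(1)$, this gives $|n_i|\le B_f$ (Proposition~\ref{prop:height}).
\item Soundness thus yields only boundedness, not $f(\mathbf n)=0$. The bounded false certificates are removed by the tuple guards $(P_1,\dots,P_{10})\ne([k_1]P,\dots,[k_{10}]P)$ for $|k_i|\le T$ with $f(\mathbf k)\ne0$.
\end{enumerate}

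This gives a family of queries indexed by $T$: all are soluble if $f$ has an integer zero, and some is insoluble otherwise (Proposition~\ref{prop:queries}). So the paper's argument is a Turing reduction run alongside a search for integer zeros, not the single-equation reduction you describe.

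Completeness then needs the simultaneous prime-pattern Theorem~\ref{an:thm:patterns}, so that one witness group serves every contact prime. The count $2\cdot21+1+2\cdot21+1=86$ comes from this structure, not from tuning $10+10m$. All nonzero guards are absorbed by the reciprocal polynomial of Section~\ref{sec:encoding} rather than by inverse variables $ab=1$.
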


To prove Theorem \ref{thm:main}, we use the height and Selmer-theoretic inputs from \cite{OAI}, and Sun's result \cite{Sun10} that $\HTP_{10}(\Z)$
is undecidable. 
Ten points in a fixed cyclic elliptic subgroup encode ten integer indices. At primes detected by the height estimate, two anchored logarithm ratios agree modulo high prime powers and therefore agree with those indices. We test the input polynomial on the ratios to force divisibility of its value at the integer index tuple. If this value is nonzero, the height estimate bounds every index. A sequence of finite tuple guards excludes all bounded false certificates. Running those rational queries alongside the enumeration of integer solutions gives a decision procedure relative to the rational oracle.

We explain five strategies that save variables. First, a rational projective change moves all five height contacts to finite points $\theta_0=0,\theta_1,\ldots,\theta_4$. Fix a rational normalizing point $\xi$ outside that set, and put
\begin{equation}\label{eq:introdetector}
 F_5(t)=\prod_{j=0}^4(t-\theta_j).
\end{equation}
For the twenty rational components $X_r$ of the elliptic $x$-coordinates, we define $A_r=X_r^2$ if $X_r\ne0$, and $A_r=1$ otherwise. In addition, we set $A_0=1$. These are computed expressions on finite branches. One slope $s$ gives all twenty-one arguments $A_rs$, and
\[
 2h(X_r)=h(A_r)\le h(s)+h(A_rs).
\]
The base argument $s$ and its certificate are used once, so only twenty-one pole predicates are required.

Second, the detectors
\begin{equation}\label{eq:intromultiplicative}
 R_r=\frac{A_r^{-5}F_5(A_rs)}{(A_rs-\xi)^6}
\end{equation}
have an especially useful prime-pattern expression. With a common square denominator $v$ and $s=u/v$, their numerators become products of the five contacts $u-(\theta_j/A_r)v$, divided by square factors. Across all slopes there are at most $1+4\cdot21=85$ distinct positions. The factor $A_r^{-5}$ is a rational square; it preserves valuation parity even at primes dividing $A_r$.

Third, all these positions can share one group of eleven elliptic witnesses. The substantive requirement is simultaneous compatibility of the reduction orders at \emph{different} primes in the same pattern. 
We use an extension of the Chinese Remainder Theorem (cf. \cite{Sun-book}) which states that a system of congruences $h\equiv a_j\pmod{n_j}$ $(j=1,\ldots,k$) is solvable exactly when $\gcd(n_j,n_l)\mid a_j-a_l$ for all $1\le j<l\ls k$.

The simultaneous prime construction needs more than positive-density polynomial recurrence. The polynomial prime-pattern results of Tao and Ziegler \cite{TZ,TZerr,TZpoly} provide the relevant transference framework. We adapt their asymptotic argument using the unconditional linear Artin-symbol theorem of Frei and Tinkov\'a \cite[Theorem 1.4]{FT}. At a fixed cutoff $L$, Gaussian ray-class fibres give a weighted lower bound of size $NT/(\log L)^{4k}$ for $k$ fixed positions. A Selberg sieve using the quadric congruence estimate of Browning and Loughran \cite{BL}, together with a large-prime geometric sieve based on \cite{BHB}, bounds the remaining bad weight by $O(NT/(L\log L))+o(NT)$. Choosing $L$ first makes the two estimates compatible. The fields in the Artin-symbol theorem remain fixed while the counting limit is taken.

Fourth, divide each local expression of required precision $k_j$ by $(t-\xi)^{k_j+1}$. This denominator is a unit at its contact primes and makes the expressions small at the real embeddings and the prime ideals above the fixed exceptional primes. A product of quadratic factors packages the local inequalities. With one witness group, its reciprocal trace tends to 2, and each predicate input is asymptotic to $1/(4A_r^6s)$. Uniform bounds independent of the later congruence moduli permit every preparation parameter to be zero. Each pole predicate needs only two structural witnesses. A separable finite-algebra construction and a reciprocal polynomial combine all remaining root choices and nonzero guards in one final variable.

Fifth, we remove the rational source tuple altogether. Instead of comparing two logarithm ratios $L_i$ and $L'_i$ to a free coordinate $a_i$ and imposing $f(\mathbf a)=0$, we impose local precision on
\[
 L_i-L'_i\quad(1\le i\le10)\ \ \t{and}\ \ f(L_1,\ldots,L_{10}).
\]
Two anchors ensure that these ratios are integral at the relevant prime ideal and congruent to the integer indices. For every bounded integer tuple $\mathbf k$ with $f(\mathbf k)\ne0$, exclude the entire point tuple $([k_1]P,\ldots,[k_{10}]P)$. For this source-tuple elimination, Sections~\ref{sec:local}, \ref{sec:sharing} and \ref{sec:reduction} give its local, completeness, and height arguments. 

There are ten root points and eleven shared local points, each with two free rational coordinate components. The resulting total is
\begin{equation}\label{eq:introcount}
 2(10+11)+1+2\cdot21+1=86.
\end{equation}
The temporary prime-pattern parameters and the analytic cutoff add no query variables. The sections below prove both the simultaneous prime theorem and its use in this count.

\section{Arithmetic data and the pole-parity predicate}\label{sec:inputs}

All heights below are absolute \emph{logarithmic} Weil heights. In particular, for coprime integers $a$ and $b>0$, we have $h(a/b)=\log\max(|a|,b)$, and in particular $h(0)=0$. 
For any prime $p$, we write $\F_p=\Z/p\Z$, set
$$\Z_{(p)}=\l\{\f ab:a,b\in\Z\ \t{and}\ p\nmid b\r\},$$
 and let $\Q_p$ be the field of $p$-adic numbers. 
   For $b\in\Q^\times=\Q\sm\{0\}$, we define its {\it support} 
$\Supp(b)$ as the set of primes $p$ with the $p$-adic valuation $v_p(b)$ of $b$ nonzero. 
As usual, we regard $v_p(0)$ as $+\infty$ for any prime $p$. We also set
  $$\Sq(\Q)=\{x^2:x\in\Q\}\ \ \t{and}\ \ \Q^{\times2}=\Sq(\Q)\setminus\{0\}.$$

The letter $p$ always denotes a rational prime. For a number field $L$, write $\OO_L$ for the ring of algebraic integers in $L$. We use $\mathcal P$ for a nonzero prime ideal of $\OO_L$. The notation $\mathcal P\mid p$ means that $\mathcal P$ divides the ideal $p\OO_L$ (equivalently, $\mathcal P\cap\Z=p\Z$). For $b\in L^\times=L\sm\{0\}$, let $\operatorname{ord}_{\mathcal P}(b)$ be the exponent of $\mathcal P$ in the principal fractional ideal $(b)$. Call $e_{\mathcal P}=\operatorname{ord}_{\mathcal P}(p)$ the ramification index, and define the normalized \emph{additive} valuation by
\[\qquad v_{\mathcal P}(0)=+\infty,
\ \ \t{and}\
 v_{\mathcal P}(b)=\frac{\operatorname{ord}_{\mathcal P}(b)}{e_{\mathcal P}} \ \t{for}\ b\in L^\times. 
\]
Thus $v_{\mathcal P}(L^\times)=e_{\mathcal P}^{-1}\Z$, and its values are integers when $\mathcal P$ is unramified. Write $L_{\mathcal P}$ for the completion and $\OO_{\mathcal P}$ for its valuation ring. 
For $b\in\Q^\times$ and $\mathcal P\mid p$, we have $v_{\mathcal P}(b)=v_p(b)$ and $|b|_p=p^{-v_p(b)}$. 

 For a number field $L$, $\Tr_{L/\Q}$ and $\Nm_{L/\Q}$ denote trace and norm respectively,  with respect to the field extension $L/\Q$. The origin of an elliptic curve is $O$, and $[n]Q$ denotes multiplication of a point by the integer $n$. The symbols $O(\cdot),o(\cdot),\ll,\asymp$ have their usual asymptotic meanings; subscripts specify allowed dependence of constants.

\subsection{A cyclic elliptic subgroup}
Fix
\begin{equation}\label{eq:curve}
 F=\Q(\sqrt2),\qquad d_0=75-53\sqrt2,\qquad
 E:y^2=x^3-d_0^2x.
\end{equation}
Let $\sigma$ be the nontrivial automorphism of $F$; a superscript $\sigma$ on a curve, point or expression means that its coefficients or coordinates are conjugated by this automorphism. 
For an ideal $\mathcal P\subset\OO_F$, write $\mathcal P^\sigma=\sigma(\mathcal P)$ for its conjugate ideal.
We have $d_0d_0^\sigma=7$. The rank calculation in \cite[Section 5]{OAI} gives a fixed even integer $m>0$ and a non-torsion point $P$ such that
\begin{equation}\label{eq:cyclic}
 mE(F)=\Z P.
\end{equation}
We recall the calculation to make the cyclic group in the construction explicit.

The ring $\OO_F=\Z[\sqrt2]$ is principal by Minkowski's bound $\sqrt2<2$. Its unit group modulo squares has order four, and the signs of $-1$ and $1+\sqrt2$ show that every totally positive unit is a square. For the two-isogenous curve $E':y^2=x^3+4d_0^2x$, the usual descent maps $\alpha,\alpha'$ to $F^\times/F^{\times2}$ satisfy
\[
 |\im\alpha|\cdot|\im\alpha'|=4\cdot2^{\operatorname{rank}E(F)};
\]
see \cite[Chapter X, Section 4]{Silv}. The image for $E$ is supported only at $(d_0)$ and in the unit classes, and hence has size at most eight. Its torsion classes are $1,-1,d_0,-d_0$. With $w=-5+4\sqrt2=d_0(1+\sqrt2)^3$, one has $w^3-w=-8d_0$ and
\[
 (-d_0w,2\sqrt2\,d_0^2)\in E(F).
\]
Its $x$-coordinate has opposite signs at the two embeddings, so its class is outside those four classes. Thus $|\im\alpha|=8$.

For this descent paragraph, use the integer-valued ideal valuations $\operatorname{ord}_{\mathcal P}$. For $E'$, every nonzero $x$ is totally positive, and it has even $\operatorname{ord}_{\mathcal P}(x)$ away from $2d_0$. At $\mathcal P=(\sqrt2)$, one has $\operatorname{ord}_{\mathcal P}(4d_0^2)=4$. If $a=\operatorname{ord}_{\mathcal P}(x)$ were odd, then $3a$ and $4+a$ would be distinct odd integers, so $x^3+4d_0^2x=y^2$ would have odd valuation, a contradiction. At $\mathcal P=(d_0)$ the same argument leaves only $\operatorname{ord}_{\mathcal P}(x)=1$ to check. Writing $x=d_0u$ in that case would require $u^2+4=0$ in $\F_7$, which is impossible. Thus $\operatorname{ord}_{\mathcal P}(x)$ is even at every nonzero prime ideal. Principality and positivity imply that $x$ is a square. The exceptional points also have trivial image, so $|\im\alpha'|=1$. The rank is one. Taking $m$ divisible by the exponent of the torsion subgroup proves \eqref{eq:cyclic}; it is even because $E$ has nontrivial rational two-torsion.

Every point in \eqref{eq:cyclic} has an affine preimage under $[m]$. For the origin one can use $(0,0)$, and a preimage of a nonzero point cannot be the origin. This observation will save coordinates in Section~\ref{sec:encoding}.

\subsection{Formal comparisons}
Put $z=-x/y$ on nonzero cyclic points and $z(O)=0$. Such points are not two-torsion, so both $x$ and $y$ are nonzero. Let
\[
 \ell(Z)=Z+\sum_{t\ge2}c_tZ^t\ \ \t{and}\ \ 
 \ell_K(Z)=Z+\sum_{2\le t\le K}c_tZ^t
\]
be the formal logarithm and its truncation. The coefficients are effectively computable from \eqref{eq:curve}.

\begin{lemma}\label{lem:formal}
There is a fixed finite set $S_E$ of primes such that the following holds. 
Suppose $p\notin S_E$, $\mathcal P\mid p$, and $nP,n'P$ reduce to $O$ modulo $\mathcal P$, where $n,n'\in\Z$ and $n'\ne0$. Then
\begin{equation}\label{eq:formalval}
 v_{\mathcal P}\left(\frac{z(nP)}{z(n'P)}\right)=v_p\l(\f n{n'}\r).
\end{equation}
If $p>K+1$ and $v_p(n/n')\ge0$, then $\ell_K(z(n'P))\ne0$ and
\begin{equation}\label{eq:formalapprox}
 v_{\mathcal P}\left(\frac{\ell_K(z(nP))}{\ell_K(z(n'P))}-\frac n{n'}\right)\ge K.
\end{equation}
For $p>K+1$, we have
\begin{equation}\label{eq:truncval}
 v_{\mathcal P}\left(\frac{\ell_K(z(nP))}{\ell_K(z(n'P))}\right)=v_p\l(\f n{n'}\r).
\end{equation}
The same statements hold for $E^\sigma,P^\sigma$, using the conjugate coefficients $\ell_K^\sigma$.
\end{lemma}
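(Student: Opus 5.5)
We work in the formal group $\hat E$ of $E$ over the completion $\OO_{\mathcal P}$, using the parameter $z=-x/y$. First we choose $S_E$. We take a Weierstrass model with coefficients in $\OO_F$ (the given one has $a_4=-d_0^2\in\OO_F$). We put into $S_E$ the primes below the primes of bad reduction, which divide $2d_0$, so $p\in\{2,7\}$. We also put in the primes that ramify in $F$ (only $2$), and the primes dividing the index $[E(F_{\mathcal P})_1:\cdots]$-type denominators that could occur. Since the model is integral and $p$ is unramified, $e_{\mathcal P}=1<p-1$ as soon as $p\ge3$.

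For $p\notin S_E$, the points reducing to $O$ modulo $\mathcal P$ are exactly those with $v_{\mathcal P}(z)\ge1$. The map $Q\mapsto z(Q)$ identifies this kernel of reduction with $\hat E(\mathcal P\OO_{\mathcal P})$, and the group law there is given by the formal group law $F(Z_1,Z_2)\in\OO_F[[Z_1,Z_2]]$.

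For \eqref{eq:formalval}, we use the standard structure of $\hat E(\mathcal P^r)$. We have $[n](Z)=nZ+O(Z^2)$, and the coefficients of the formal logarithm satisfy $c_t\in\frac1t\OO_F$. Since $e_{\mathcal P}=1<p-1$, the logarithm gives an isometric isomorphism $\hat E(\mathcal P)\to\mathcal P\OO_{\mathcal P}$ with $v_{\mathcal P}(\ell(z))=v_{\mathcal P}(z)$ (Silverman, IV.6.4). The same fact implies that $v_{\mathcal P}(z(kQ))=v_{\mathcal P}(z(Q))+v_p(k)$.

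Now let $r_0$ be the least positive index such that $r_0P$ reduces to $O$ modulo $\mathcal P$. Then $r_0\mid n$ and $r_0\mid n'$. Since $\ell$ is a homomorphism, $\ell(z(nP))=(n/r_0)\ell(z(r_0P))$. Taking valuations gives $v_{\mathcal P}(z(nP))=v_p(n/r_0)+v_{\mathcal P}(z(r_0P))$, and the same for $n'$. Taking the difference gives \eqref{eq:formalval}. The case $n=0$ gives $z=0$ on both sides, with both valuations equal to $+\infty$, which is consistent.

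For the truncation, write $z=z(nP)$ and $z'=z(n'P)$, with valuations $a,a'\ge1$. Then
\[
\ell(z)-\ell_K(z)=\sum_{t>K}c_tz^t,
\]
and each term has valuation at least $ta-v_p(t)$. For $p>K+1$ and $t\ge K+1$, the bound $t-v_p(t)\ge K+1$ for $t\ge K+1$ gives
\[
v_{\mathcal P}(\ell(z)-\ell_K(z))\ge K+a.
\]
This needs a little care: we use $ta-v_p(t)\ge a+(t-1)-v_p(t)\ge a+K$, which holds because $v_p(t)\le\log_p t<t-K$ when $p>K+1$ and $t\ge K+1$. We should double check the edge case $t=p$, where $v_p(t)=1$ and $t-1\ge K+1$, so it is fine. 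In particular $v_{\mathcal P}(\ell_K(z))=a$, by the isometry and the ultrametric inequality. This shows $\ell_K(z')\ne0$, since $a'<\infty$ because $n'P\neq O$ ($P$ is non-torsion and $n'\ne0$).

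Hence
\[
\frac{\ell_K(z)}{\ell_K(z')}=\frac{\ell(z)(1+\delta)}{\ell(z')(1+\delta')},
\]
where $v(\delta),v(\delta')\ge K$. Since $\ell(z)/\ell(z')=n/n'$, we get
\[
v_{\mathcal P}\Big(\frac{\ell_K(z)}{\ell_K(z')}-\frac n{n'}\Big)\ge v_p(n/n')+K\ge K,
\]
which is \eqref{eq:formalapprox}. The valuation statement \eqref{eq:truncval} follows from $v(\ell_K(z))=a$ and $v(\ell_K(z'))=a'$ together with \eqref{eq:formalval}. It also holds when $v_p(n/n')<0$, which explains why that hypothesis is dropped there.

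The conjugate statements follow by applying $\sigma$ throughout. Equivalently, one applies the same argument at $\mathcal P^\sigma$, since $\sigma$ maps $\OO_F$ to itself and carries valuations at $\mathcal P$ to those at $\mathcal P^\sigma$.

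The main obstacle is arranging that $p\notin S_E$ guarantees good reduction, non-ramification, and the integrality $t c_t\in\OO_F$ for the chosen model simultaneously. Once the model is integral at $\mathcal P$, the rest is the standard formal-group estimate.
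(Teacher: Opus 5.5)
Your proof is correct and is essentially the paper's argument: exclude $2$, $7$ and the ramified prime, use $v_{\mathcal P}(c_t)\ge -v_p(t)$ and the isometric logarithm homomorphism on the reduction kernel, bound the tail via $t-1-v_p(t)\ge K$, and divide by $\ell_K(z(n'P))$. Your detour through the minimal index $r_0$ is only a cosmetic variant of the paper's identity $[n'](nP)=[n](n'P)$. One harmless overstatement: $v_{\mathcal P}(z)\ge 1$ does not characterize the points reducing to $O$, since points near the $2$-torsion point $(0,0)$ also have positive $z$-valuation (the correct test is $v_{\mathcal P}(1/x)>0$). You only use the direction that points reducing to $O$ lie in the formal group, so nothing breaks.
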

\begin{proof}
Exclude 2, ramified primes, and bad reduction primes. The invariant differential has integral coefficients, so $v_{\mathcal P}(c_t)\ge-v_p(t)$. For $v_{\mathcal P}(Z)\ge1$ the logarithm converges, is a homomorphism on the formal kernel, and has valuation $v_{\mathcal P}(Z)$. The identity $[n'](nP)=[n](n'P)$ gives \eqref{eq:formalval}. If $p>K+1$, then for $t>K$ one has $t-1-v_p(t)\ge K$, and consequently
\[
 v_{\mathcal P}(\ell(Z)-\ell_K(Z))\ge v_{\mathcal P}(Z)+K.
\]
Apply this to both parameters, using \eqref{eq:formalval} and the integrality of $n/n'$, and divide by $\ell_K(z(n'P))$, whose valuation is $v_{\mathcal P}(z(n'P))$. This proves \eqref{eq:formalapprox}. For every formal parameter, $\ell_K(Z)$ has valuation $v_{\mathcal P}(Z)$, which also proves \eqref{eq:truncval}. These arguments prove the conjugate statements; compare \cite[Proposition 2.2]{OAI} and \cite[Chapter IV]{Silv}.
\end{proof}

\subsection{The height input in finite-contact coordinates}
Theorem 2.4 and Corollary 2.5 of \cite{OAI} give five distinct points
\[
 \mathcal C=\{\infty,\beta_1,\ldots,\beta_4\}\subset\mathbb P^1(\Q),
\]
a finite exceptional set, and constants $H,c_H>0$ bounding the logarithmic height by the $c_H$th power of the odd-contact radical. We first state the precise coordinate change used here. Choose $\gamma\in\Q\setminus\{\beta_1,\ldots,\beta_4\}$ and put
\begin{equation}\label{eq:contactchange}
 \varphi(t)=\gamma+1/t,\qquad
 \theta_0=0,\quad\theta_j=\frac1{\beta_j-\gamma}\ (1\le j\le4).
\end{equation}
Thus $\varphi^{-1}(\mathcal C)=\{\theta_0,\ldots,\theta_4\}$ consists of five finite points. Choose the positive integers $\alpha_j$ so that $L_j(U,V)=\alpha_j(U-\theta_jV)$ have coprime integral coefficients and, for $t=(u:v)$ outside these points, let $M_S(t)$ be the product of primes outside $S$ at which some normalized depth
\begin{equation}\label{eq:contactdepth}
 v_p(L_j(u,v))-\min\{v_p(u),v_p(v)\}\ \ \t{with}\ 0\le j\le4,
\end{equation}
is odd. There is no contact at infinity in this definition.

\begin{lemma}\label{lem:heightchange}
There is a fixed exceptional set $S_H$ such that for every fixed finite $S\supseteq S_H$ there is $H_S>0$ satisfying
\begin{equation}\label{eq:heightinput}
 h(t)\le H_S M_S(t)^{c_H}.
\end{equation}
The exponent is the one supplied by the original height theorem and is unchanged by enlarging $S$.
\end{lemma}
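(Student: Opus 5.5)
The plan is to transport the original height theorem, Theorem~2.4 and Corollary~2.5 of \cite{OAI}, through the M\"obius map $\varphi$ of \eqref{eq:contactchange}. I will use two facts: heights change by a bounded amount under a fixed rational automorphism of $\mathbb P^1$, and contact depths are preserved outside a finite set of primes. The original statement reads: there is an exceptional set $S_0$ and constants $H,c_H$ such that for $\tau\in\mathbb P^1(\Q)\setminus\mathcal C$,
\[
 h(\tau)\le H\, M^{\mathrm{orig}}_{S_0}(\tau)^{c_H},
\]
where $M^{\mathrm{orig}}_{S_0}$ is the product of primes outside $S_0$ at which some normalized contact depth of $\tau$ with a point of $\mathcal C$ is odd. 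For a point $\beta\in\mathcal C$ with coprime integral form $\ell_\beta$, that depth is $v_p(\ell_\beta(\cdot))-\min v_p$ of the coordinates. For $\infty$ this is the depth of the second coordinate.

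\emph{Step 1 (heights).} For $t\in\Q\setminus\{\theta_0,\ldots,\theta_4\}$, set $\tau=\varphi(t)$. Since $\varphi$ is induced by a fixed invertible integral $2\times2$ matrix $A$ acting on $(u:v)$, namely $(u:v)\mapsto(\gamma u+v:u)$ after clearing the denominator of $\gamma$, we have $|h(t)-h(\tau)|\le C_\varphi$ with $C_\varphi$ depending only on $A$. This uses the standard estimate for linear maps on primitive integer vectors: the gcd of the image vector divides $\det A$.

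\emph{Step 2 (depths).} Take $S_H=S_0\cup\{p:p\mid\det A\}\cup\{p:p\mid \alpha_j\text{ or a denominator of }\gamma,\beta_j\}$. For $p\notin S_H$, the matrix $A$ lies in $GL_2(\Z_{(p)})$. Hence if $(u,v)$ is $p$-primitive, so is $A(u,v)$. Moreover each linear form $L_j$ is, up to a $p$-unit, the pullback under $A$ of the form $\ell_\beta$ with $\beta=\varphi(\theta_j)$. In particular $\theta_0=0$ corresponds to $\infty$, since $u=0$ gives $\tau=(v:0)$. Therefore the depths \eqref{eq:contactdepth} at $p$ coincide with the original depths of $\tau$. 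This gives $M_S(t)=M^{\mathrm{orig}}_{S}(\tau)$ for $S\supseteq S_H$.

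\emph{Step 3 (enlarging $S$).} This is the main point needing care. The original bound is stated for $S_0$, while $M^{\mathrm{orig}}_S\le M^{\mathrm{orig}}_{S_0}$ can be smaller by the factor $\prod_{p\in S\setminus S_0}p$ at most. So $M^{\mathrm{orig}}_{S_0}\le D_S\,M^{\mathrm{orig}}_S$ with $D_S=\prod_{p\in S}p$.

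\emph{Combining.} Steps 1–3 give
\[
 h(t)\le C_\varphi+H D_S^{c_H}M_S(t)^{c_H}.
\]
To absorb $C_\varphi$ I must know $M_S(t)\ge1$, which holds trivially because $M_S(t)$ is a product of primes. Then $H_S=C_\varphi+HD_S^{c_H}$ works, and the exponent is unchanged.

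If the original theorem is only stated for $S_0$ itself, Step 3 covers enlargement. If it is stated for all $S\supseteq S_0$, Step 3 is unnecessary. The remaining check is that no $p\notin S_H$ creates a spurious contact from the point at infinity of the $t$-line. That point maps to $\gamma\notin\mathcal C$, and contact with $\gamma$ is not counted on either side, so nothing extra appears.
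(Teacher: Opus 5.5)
Your proposal is correct and follows the paper's proof: transport the cited height theorem through $\varphi$, use that outside finitely many primes the matrix and its inverse are $p$-integral, so that the pullbacks ($u$ for the infinity form and a unit multiple of $u-\theta_jv$ for the others) give the same normalized depths, and then absorb the bounded height change using $M_S(t)\ge1$. The only difference is the enlargement of $S$. The paper appeals to the cited corollary. You instead use the elementary bound $M^{\mathrm{orig}}(\tau)\le\bigl(\prod_{p\in S}p\bigr)M_S(t)$, where $M^{\mathrm{orig}}$ is formed with the original exceptional set. This makes that step self-contained, at the harmless cost of a larger constant $H_S$ with the same exponent $c_H$.
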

\begin{proof}
In homogeneous coordinates the map is $(u:v)\mapsto(\gamma u+v:u)$. Outside a fixed set its matrix and inverse are integral with unit determinant, so they preserve $\min\{v_p(u),v_p(v)\}$. The pullback of the original infinity form is $u$. The pullback of a form vanishing at $\beta_j$ is a fixed nonzero multiple of
\[
 (\gamma-\beta_j)u+v=(\gamma-\beta_j)(u-\theta_jv).
\]
After excluding fixed coefficient primes, these multiples and the primitive-form normalizations are units. The normalized contact depths, and hence the radicals, agree. Apply the cited theorem to $\varphi(t)$. The elementary projective height inequality $h(t)\le h(\varphi(t))+O_\gamma(1)$ absorbs the additive constant into $H_S$, since $M_S(t)\ge1$. The enlargement assertion follows from the cited corollary.
\end{proof}

Fix a positive integer $c\ge c_H$. Since $M_S(t)\ge1$, we shall use the weaker bound $h(t)\le H_SM_S(t)^c$. Choose $\xi\in\Q$ different from the five $\theta_j$, and $\Delta\in\Z_{>0}$ with $\Delta^2\theta_j\in\Z$ for every $j$. All these numbers are fixed arithmetic data. Enlarge the fixed exceptions to include all relevant coefficients and nonzero differences, including $\theta_j-\xi$.

\subsection{A pole predicate with its preparation parameter fixed}
Let $\mathcal D_2$ be the set of rationals whose reduced positive denominator is a square, equivalently those having no negative odd prime valuation; include zero. The finite multiplier list $\mathcal M\subset\Q^\times$ of \cite[Section 3.2]{OAI} has this property: for every $B\ne0$, some $\mu\in\mathcal M$ makes $\mu B$ positive, square in $\Q_2$, and of unit nonsquare class in $\Q_3$. The last condition refers to its square class and permits even nonzero 3-adic valuation. Such a finite list is obtained by weak approximation in the finite local square-class groups, as explained in the source. Fix
\[
 S_0=\{2,3\}\cup\bigcup_{\mu\in\mathcal M}\Supp(\mu).
\]
Write $\mathcal E_e:y^2=x(x-e)(x+3e)$. For $B\ne0$, define $\Phi_0(B)$ by choosing $\mu\in\mathcal M$, setting
\begin{equation}\label{eq:phioriginal}
 B'=\mu B,\qquad H'=B'(B'-B^2)=(\mu B)^2(1-B/\mu),
\end{equation}
and choosing $e\in\Q$ with $eH'\ne0$ and a rational point with $x$ in $B'\Q^{\times2}$ on each of $\mathcal E_e$ and $\mathcal E_{eH'}$. Set $\Phi_0(0)$ to be true. This is the source's pole predicate with its preparation parameter fixed at zero. Its second finite multiplier is no longer needed.

\begin{proposition}\label{prop:phi}
The predicate satisfies
\begin{equation}\label{eq:phiproperties}
 \Phi_0(B),\ p\notin S_0
 \quad\Longrightarrow\quad v_p(B)\notin\{-1,-3,-5,\ldots\}.
\end{equation}
There are fixed $\epsilon>0$ and integers $b_p\ge1$ for $p\in S_0$ such that
\begin{equation}\label{eq:smallcomplete}
 B\in\mathcal D_2,\quad 0<B<\epsilon,\quad
 v_p(B)\ge b_p\ (p\in S_0)
 \quad\Longrightarrow\quad\Phi_0(B).
\end{equation}
For each multiplier choice, the predicate has two free rational structural witnesses $e,z$ and two square conditions.
\end{proposition}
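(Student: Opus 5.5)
We prove the three assertions of Proposition~\ref{prop:phi} separately, following the structure of the pole predicate in \cite[Section 3]{OAI}. The witness count is read off from the definition. After fixing $\mu\in\mathcal M$, the quantities $B'$ and $H'$ are computed expressions in $B$. The remaining data are the free parameter $e$ and rational points on $\mathcal E_e$ and $\mathcal E_{eH'}$ whose $x$-coordinates lie in $B'\Q^{\times2}$. Write these as $x_1=B'z_1^2$ and $x_2=B'z_2^2$, so that the curve equations become two square conditions:
\[
x_1(x_1-e)(x_1+3e)\in\Sq(\Q),\qquad x_2(x_2-eH')(x_2+3eH')\in\Sq(\Q).
\]
As in the source, one of $z_1,z_2$ is eliminated, for instance by scaling $e$ so that $z_1=1$. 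This leaves the two structural witnesses $e,z$ and the two square conditions, as claimed.

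For \eqref{eq:phiproperties}, suppose $p\notin S_0$ and $v_p(B)=-(2k+1)$ with $k\ge0$. Since $\mu$ is a $p$-adic unit, $v_p(B')=-(2k+1)$, while $v_p(B'^2)$ and $v_p(B^2)$ both equal $-(4k+2)$. Hence
\[
H'=(\mu B)^2(1-B/\mu)\quad\text{has}\quad v_p(H')=-(4k+2)-(2k+1)=-(6k+3),
\]
which is odd. The argument then runs through the $2$-descent images at $p$. A point of $\mathcal E_e(\Q_p)$ with $x\in B'\Q_p^{\times2}$ forces $B'$ to lie in the local image of the connecting map $x\mapsto x\bmod$ squares. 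We then do a case split on $v_p(e)$ relative to $v_p(B')$. The good reduction case ($v_p(e)$ even) gives an image consisting only of units mod squares, which is impossible since $v_p(B')$ is odd. The multiplicative or odd-valuation case is handled by the valuation parity of the three factors $x$, $x-e$, $x+3e$. Applying the same analysis to $\mathcal E_{eH'}$, whose parameter has valuation shifted by the odd number $v_p(H')$, produces the contradiction: one of the two curves has the wrong parity class. This is exactly the mechanism of the source's odd-pole exclusion, and we expect to cite it with the preparation parameter at zero rather than re-derive it. The case analysis at $p$, including $p=3$-like degeneracies of $x+3e$, is the main obstacle. Since $p\notin S_0$ and $p$ is odd, the two nonzero roots $e,-3e$ differ by the unit multiple $4e$, so only generic valuation cases occur.

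For \eqref{eq:smallcomplete}, we construct witnesses. For small positive $B\in\mathcal D_2$, take $\mu$ so that $\mu B$ is positive, a $2$-adic square, and of the prescribed unit nonsquare $3$-adic class. Then $H'=(\mu B)^2(1-B/\mu)$ is positive, and for $B<\epsilon$ it is close to $(\mu B)^2$ in real and $p$-adic terms for $p\in S_0$, using $v_p(B)\ge b_p$. So $H'$ is a square times a factor $1-B/\mu$ that is a local square at all $p\in S_0$ and at $\infty$. At primes outside $S_0$, the condition $B\in\mathcal D_2$ controls the odd negative valuations. The source's completeness construction then produces $e$ and points on $\mathcal E_e$ and $\mathcal E_{eH'}$, for example by choosing $e$ so that $x=B'$ is a point up to squares: take $e$ with $(B'-e)(B'+3e)\in B'\Q^{\times2}$, which is a conic with a rational point found via the local-global principle, its local solvability being exactly what $\epsilon$ and $b_p$ guarantee. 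The thresholds $\epsilon$ and $b_p$ depend only on the finite list $\mathcal M$, since Hensel's lemma at $p\in S_0$ and the real sign need fixed precision. They are therefore fixed constants, as required.
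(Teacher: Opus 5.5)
Your soundness argument and witness count essentially match the paper's. For \eqref{eq:phiproperties} the decisive point is the one you reach at the end. Since $v_p(eH')=v_p(e)+3v_p(B)$, exactly one of $e,eH'$ has even valuation. Rescaling that curve by a square power of $p$ makes its parameter a unit $l$. For $p\ge5$ every nonzero $x$ on $y^2=x(x-l)(x+3l)$ then has even valuation: the right side has valuation $3v_p(x)$ if $v_p(x)<0$ and $v_p(x)$ if $v_p(x)>0$. This contradicts $x\in B'\Q_p^{\times2}$. Your separate analysis of the curve with odd parameter valuation is unnecessary. That curve has additive (not multiplicative) reduction and gives no contradiction by itself, so the case analysis you flag is not really an obstacle.

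The gap is in \eqref{eq:smallcomplete}. The one concrete mechanism you offer chooses $e$ so that $B'(B'-e)(B'+3e)$ is a square, via a conic and the Hasse principle. It involves only $\mathcal E_e$, where it is trivial anyway: $e=B'$ gives the point $(B',0)$, so that conic is rational. What must be produced is a single $e$ serving $\mathcal E_e$ and $\mathcal E_{eH'}$ simultaneously, i.e.\ a rational solution of \eqref{eq:phinonzero}--\eqref{eq:phisquaretwo}. That joint system is not a conic, so no Hasse-principle argument of the kind you describe applies. Producing the common parameter is exactly the Selmer-theoretic content of \cite[Lemmas 3.3--3.6]{OAI}. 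Citing the source is legitimate, and the paper does so, but then you must check its hypotheses, which you do not. Let $\beta,\theta$ be the positive squarefree representatives of $B',H'$. The hypotheses are the square-class conditions of \cite[Lemma 3.2]{OAI}:
$\beta$ and $\theta$ have disjoint supports avoiding $2$ and $3$, and $\beta\ne1$;
$\theta$ is a local square at every $p\mid\beta$, and $\beta$ is a local square at every $p\mid\theta$;
both are squares at $2$, and at $3$ the class of $\theta$ is square while that of $\beta$ is a unit nonsquare.

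Moreover, contrary to your last sentences, the conditions away from $S_0$ are not what $\epsilon$ and $b_p$ guarantee. Those constants (for instance $\epsilon=\frac12\min_{\mu>0}\mu$ and $b_p=3+\max(0,\max_\mu v_p(\mu))$) only make $1-B/\mu$ positive and a local square at $S_0$. This also needs the observation that the selected $\mu$ is positive because $B>0$. Outside $S_0$ the needed facts come from $B\in\mathcal D_2$ and the factorization $H'=\mu B^2(\mu-B)$. If $p\mid\beta$ with $p\notin S_0$, then $v_p(B)$ is odd, hence positive. So $1-B/\mu\in1+p\Z_p$ and $\theta$ is a local square at $p$. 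If $p\mid\theta$, then $p\notin S_0$. A positive $v_p(B)$ would give $v_p(H')=2v_p(B)$, and a negative one would give $3v_p(B)$, which is even because $B\in\mathcal D_2$. Hence $v_p(B)=0$ and $v_p(\mu-B)>0$, so $B'=\mu^2(B/\mu)$ is a local square at $p$. Your remark that $\mathcal D_2$ ``controls the odd negative valuations'' gestures at this but does not establish the mutual local-square conditions. Without them the cited construction cannot be invoked.
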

\begin{proof}
If $k=v_p(B)<0$ is odd and $p\notin S_0$, then $v_p(B')=k$ and $v_p(H')=3k$. The valuations of $e$ and $eH'$ have opposite parities. Scale the curve for the parameter with even valuation by a square power of $p$ to make that parameter a unit. For a unit $l$ and $p\ne2,3$, every nonzero $x$ on $y^2=x(x-l)(x+3l)$ has even valuation: a negative valuation gives $3v_p(x)$ on the right, and a positive valuation gives $v_p(x)$. This contradicts $x\in B'\Q_p^{\times2}$ and proves \eqref{eq:phiproperties}.

For completeness, one may take
\[
 \varepsilon=\frac12\min_{\substack{\mu\in\mathcal M\\\mu>0}}\mu\ \ \t{and}\ 
 \ 
 b_p=3+\max\bigl(0,\max_{\mu\in\mathcal M}v_p(\mu)\bigr).
\]
The set of positive multipliers is nonempty. Select $\mu$ for $B$ by the stated multiplier property. Since $B>0$, this $\mu$ is positive. The parenthesis $1-B/\mu$ is positive and is a square in every $\Q_p$ for $p\in S_0$: at odd primes it lies in $1+p\Z_p$, and at 2 it lies in $1+8\Z_2$. So $H'>0$ and $H'$ is locally square at $S_0$.

Let $\beta,\theta$ be the positive squarefree representatives of $B',H'$. Their supports avoid 2 and 3. Moreover $\beta\ne1$, because its class is nonsquare in $\Q_3$; thus $\beta$ has a nonempty prime support, as required in the cited construction. If $p\mid\beta$ and $p\notin S_0$, then $v_p(B)$ is odd and, because $B\in\mathcal D_2$, positive. Thus $1-B/\mu$ is a local square, so $\theta$ is locally square at $p$. At $p\in S_0$ this was already arranged. The two supports are disjoint.

Conversely, if $p\mid\theta$, then $p\notin S_0$ and $v_p(B)$ is even. In
\[
 H'=\mu B^2(\mu-B),
\]
a positive valuation of $B$ makes $v_p(H')=2v_p(B)$ even; a negative one makes it $3v_p(B)$ even. Therefore $v_p(B)=0$, and odd valuation of $H'$ requires $v_p(\mu-B)>0$. Consequently $B'=\mu B$ is locally square, being $\mu^2$ times an element of $1+p\Z_p$. Both $\beta,\theta$ are squares at 2, and at 3 the class of $\theta$ is square while that of $\beta$ is a unit nonsquare. These are exactly the square-class properties in \cite[Lemma 3.2]{OAI}. Lemmas 3.3--3.6 of that source supply a common parameter for the two curves and points of class $\beta$. Replacing $\theta$ by $H'$ changes the second curve by a rational square scaling, preserving the $x$-class. This proves \eqref{eq:smallcomplete}. This step uses the cited Selmer-theoretic input; no assertion that $\Phi_0$ contains all of $\mathcal D_2$ is made.

To normalize the coordinates, if the first point has $x=B'q^2$, $q\ne0$, scale both curves by $(x,y)\mapsto(x/q^2,y/q^3)$ and replace $e$ by $e/q^2$. The first $x$ becomes $B'$, and the second becomes $B'z^2$ for some $z\ne0$. Thus the equivalent conditions are
\begin{align}
 BeH'z&\ne0,\label{eq:phinonzero}\\
 A_1:=B'(B'-e)(B'+3e)&\in\Sq(\Q),\label{eq:phisquareone}\\
 A_2:=B'(B'z^2-eH')(B'z^2+3eH')&\in\Sq(\Q).\label{eq:phisquaretwo}
\end{align}
The right side of the second curve equation is $z^2A_2$. All scalings are reversible under the nonzero guard, and zero ordinates are allowed. The two square roots are finite choices; only $e,z$ are free structural parameters.
\end{proof}

\subsection{A fixed deeper anchor}

For the fixed set $S_0$ just defined, fix a positive integer $c_\ast$ divisible by 4 such that $[c_\ast]P$ lies in the formal neighborhood
\begin{equation}\label{eq:deepanchor}
 v_{\mathcal P}(x([c_\ast]P))\le-4
 \qquad({\mathcal P}\mid p,\ p\in S_0),
\end{equation}
with $v_{\mathcal P}(p)=1$, including ramified prime ideals. On the formal neighborhood of $O$, the integral equation \eqref{eq:curve} gives
\[
 v_{\mathcal P}(x(Q))=-2v_{\mathcal P}(z(Q)).
\]
For each $\mathcal P$ above a prime in $S_0$, the origin together with the formal points satisfying $v_{\mathcal P}(z(Q))\ge2$ is an open subgroup of the compact group $E(F_{\mathcal P})$. Indeed the integral formal group law preserves this condition, including at prime ideals of bad reduction; see \cite[Chapter IV]{Silv}. This open subgroup has finite index. A common multiple of these finitely many indices and 4 therefore gives an integer $c_\ast$ satisfying \eqref{eq:deepanchor}. One can find such a multiple by enumerating multiples of 4 and checking the finitely many $x$-coordinate valuations. The integer $c_\ast$ is fixed arithmetic data, not an unknown. Add its prime divisors to the exceptional set used for local soundness. This enlargement does not require enlarging $S_0$ or choosing the anchor again.

\section{A reciprocal trace of products for local disjunctions}\label{sec:local}

The following device combines local valuation conditions. We state it for a finite family of witnesses, although the final construction uses one group. The successful family member may depend on the prime and on the place above it; no rational variable selects that member.

\begin{lemma}\label{lem:packing}
Let $G,J\ge1$, let $R\in\Q^\times$, let $D_{g,j}\in F$ for $1\le g\le G$, $1\le j\le J$, and let $k_j\ge1$ be integers. Define
\begin{gather}
 W_g=\prod_{j=1}^J\left(1+R(D_{g,j}/R^{k_j})^2\right),\label{eq:packing}\\
 \Theta=\Tr_{F/\Q}\(\sum_{g=1}^G W_g^{-1}\)\ \ \t{and}\ \
 B=\f{R}{\Theta^2}.\label{eq:traceaggregate}
\end{gather}
Impose $W_g\ne0$ for every $g$, $\Theta\ne0$, and $\Phi_0(B)$. At every unramified prime $p\notin S_0$ with $\alpha=v_p(R)$ odd, one has $\alpha>0$, and some pair $(g,{\mathcal P})$ with ${\mathcal P}\mid p$ satisfies
\begin{equation}\label{eq:goodfamily}
 v_{\mathcal P}(D_{g,j})\ge k_j\qquad(1\le j\le J).
\end{equation}

Conversely, suppose there is a nonempty finite set $\mathcal T$ of rational primes that split in $F$, all exceeding $2G$, such that
\begin{equation}\label{eq:Rnormalized}
 v_p(R)=1\ \t{for}\ p\in\mathcal T,\ \ \t{and}\ \ v_p(R)\in2\Z\ \t{for}\ p\notin\mathcal T.
\end{equation}
If for each $p\in\mathcal T$ some pair $(g,{\mathcal P})$ with ${\mathcal P}\mid p$ satisfies \eqref{eq:goodfamily}, then every $W_g$ and $\Theta$ are nonzero, and $B\in\mathcal D_2$.
\end{lemma}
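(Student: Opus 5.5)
The plan is a direct valuation computation at the places above $p$. The key observation is about each factor $1+R(D_{g,j}/R^{k_j})^2$ in \eqref{eq:packing}. When $\alpha=v_p(R)$ is odd and $p$ is unramified, the second summand has valuation
\[
\alpha+2v_{\mathcal P}(D_{g,j})-2k_j\alpha ,
\]
which is odd, or $+\infty$ if $D_{g,j}=0$. So it never cancels the $1$ in valuation, and every factor, hence every $W_g$, has valuation $\min(0,\text{odd})\le0$ at each $\mathcal P\mid p$. In particular each $W_g^{-1}$ is $\mathcal P$-integral. Since $\Theta=\sum_g\bigl(W_g^{-1}+\sigma(W_g^{-1})\bigr)$ and $\sigma$ swaps the places above $p$ (or fixes the unique one), $v_p(\Theta)$ is at least the minimum of $v_{\mathcal P}(W_g^{-1})$ over $g$ and $\mathcal P\mid p$. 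Throughout, $v_p(B)=\alpha-2v_p(\Theta)$, so $v_p(B)$ is odd. By Proposition~\ref{prop:phi}, $\Phi_0(B)$ forbids it from being negative.

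\emph{Forward direction.} If $\alpha<0$, integrality of $\Theta$ gives $v_p(B)\le\alpha<0$ with $v_p(B)$ odd, contradicting \eqref{eq:phiproperties}. Hence $\alpha>0$.

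Now suppose, for contradiction, that no pair $(g,\mathcal P)$ satisfies \eqref{eq:goodfamily}. Then for each $g$ and each $\mathcal P\mid p$ there is a $j$ with $v_{\mathcal P}(D_{g,j})\le k_j-1$, using integrality of valuations at unramified $\mathcal P$. That factor's second summand then has valuation at most
\[
\alpha+2(k_j-1)-2k_j\alpha=-(2k_j-1)(\alpha-1)-1\le-\alpha .
\]
The other factors have valuation $\le0$, so $v_{\mathcal P}(W_g^{-1})\ge\alpha$ for all $g,\mathcal P$. Hence $v_p(\Theta)\ge\alpha$ and $v_p(B)\le-\alpha$, again negative and odd, which is the desired contradiction. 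The only point needing care here is the arithmetic inequality $(2k_j-1)(\alpha-1)+1\ge\alpha$ for $k_j\ge1$.

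\emph{Converse.} Take $p\in\mathcal T$, so $\alpha=1$ and $p$ splits, with residue fields $\F_p$ at both places above $p$. For each $g$ and $\mathcal P\mid p$, the reduction of $W_g^{-1}$ modulo $\mathcal P$ is $0$ or $1$:
\begin{itemize}
\item if some factor has a negative-valuation second summand, then $v_{\mathcal P}(W_g)<0$ and $W_g^{-1}\equiv 0$;
\item otherwise every second summand has positive (odd) valuation, so every factor, and hence $W_g$, is $\equiv1$.
\end{itemize}
Reducing $\Theta$ modulo a fixed $\mathcal P\mid p$ (and using $\sigma$ to move the conjugate terms to $\mathcal P^\sigma$), $\Theta\bmod p$ equals the number of pairs $(g,\mathcal Q)$ with $W_g^{-1}\equiv1\pmod{\mathcal Q}$. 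The good pair provided by hypothesis counts, since $v_{\mathcal P}(D_{g,j})\ge k_j$ with $\alpha=1$ gives second-summand valuation $\ge1$. So this count lies in $[1,2G]$, and it is nonzero modulo $p$ because $p>2G$. Thus $v_p(\Theta)=0$, $\Theta\ne0$, and $v_p(B)=1$.

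Each $W_g\ne0$, since every factor has finite valuation at such $\mathcal P$; a zero $D_{g,j}$ simply makes the factor equal to $1$. At every $p\notin\mathcal T$, $v_p(B)=v_p(R)-2v_p(\Theta)$ is even. Hence $B$ has no negative odd valuation, i.e.\ $B\in\mathcal D_2$. The main obstacle is the bookkeeping of the residue count at split primes, which is exactly why the hypotheses that primes in $\mathcal T$ split and exceed $2G$ are needed.
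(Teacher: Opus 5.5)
Your proof is correct and follows essentially the paper's argument. The forward direction is the paper's bound $v_p(B)\le\alpha+2\lambda$ phrased contrapositively: you show a failing factor has valuation at most $-\alpha$, whereas the paper picks a pair maximizing $v_{\mathcal P}(W_g)$ and deduces $v_{\mathcal P}(D_{g,j})\ge(k_j-\tfrac34)\alpha$. The converse is the same residue count in $\{1,\ldots,2G\}$ at split primes, and your use of $\sigma$ permuting the places above $p$ handles the split and inert cases uniformly, where the paper treats them separately.
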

\begin{proof}
At an unramified prime ideal ${\mathcal P}\mid p$, each nonzero $D_{g,j}$ gives the odd integer
\[
 b_{g,j,{\mathcal P}}=\alpha+2\bigl(v_{\mathcal P}(D_{g,j})-k_j\alpha\bigr).
\]
In particular this valuation is not zero. The valuation of the corresponding factor in \eqref{eq:packing} is therefore $\min(0,b_{g,j,{\mathcal P}})$. If $D_{g,j}=0$, its factor is 1 and contributes zero. Thus
\[
 \lambda_{g,{\mathcal P}}:=v_{\mathcal P}(W_g)
 =\sum_{D_{g,j}\ne0}\min(0,b_{g,j,{\mathcal P}})\le0.
\]
Put $\lambda=\max_{g,{\mathcal P}\mid p}\lambda_{g,{\mathcal P}}$. All inverse terms have valuation at least $-\lambda$. Taking their sum and trace gives
\[
 \tau:=v_p(\Theta)\ge-\lambda\ge0.
\]
For split $p$ this follows by summing over the two prime ideals above $p$. For inert $p$, trace in the unramified local extension preserves integrality after division by a rational integer power of $p$. Hence
\[
 v_p(B)=\alpha-2\tau\le\alpha+2\lambda.
\]
The left side is odd and cannot be negative by Proposition~\ref{prop:phi}. Consequently, $\alpha>0$ and $\lambda\ge-\alpha/2$. Choose a pair attaining $\lambda$. Each summand in the expression for $\lambda_{g,{\mathcal P}}$ is nonpositive, so for every nonzero $D_{g,j}$,
\[
 b_{g,j,{\mathcal P}}\ge\min(0,b_{g,j,{\mathcal P}})\ge\lambda\ge-\alpha/2.
\]
It follows that
\[
 v_{\mathcal P}(D_{g,j})\ge\l(k_j-\f34\r)\alpha>k_j-1,
\]
where the last inequality uses $k_j\ge1$ and the positive integer $\alpha\ge1$. Integral valuations prove \eqref{eq:goodfamily}; zero terms satisfy it automatically.

For completeness, fix $p\in\mathcal T$, so $\alpha=1$. At each prime ideal $\mathcal P\mid p$, every factor is either congruent to 1 modulo $\mathcal P\OO_{\mathcal P}$, or has negative valuation. Its valuation is never positive. Thus $W_g$ is either congruent to 1 or has negative valuation, and $W_g^{-1}$ has residue 1 or 0, respectively. At a pair satisfying \eqref{eq:goodfamily}, all the nonconstant terms in the factors have positive valuation, so the residue of $W_g^{-1}$ is 1. As $p$ splits, the trace sum modulo $p$ is therefore an integer
\[
 c_p\in\{1,2,\ldots,2G\}.
\]
Since $p>2G$, this is nonzero. The factor valuations at any one prime in the nonempty set $\mathcal T$ also show that every $W_g$ is a nonzero field element. We obtain $\Theta\ne0$ and $v_p(\Theta)=0$ for every $p\in\mathcal T$, so $v_p(B)=1$ there. At every other prime, $v_p(B)=v_p(R)-2v_p(\Theta)$ is even by \eqref{eq:Rnormalized}. Hence $B\in\mathcal D_2$.
\end{proof}

For $G=1$, the expression $R/\Theta^2$ is $R(\Nm W_1/\Tr W_1)^2$, the single-group norm/trace expression. For $G>1$ the sum of inverses expresses a local disjunction. Multiplying the individual predicate inputs would not give the same soundness property.

\subsection{Two anchors test the polynomial directly}
Fix a nonconstant polynomial $f\in\Z[X_1,\ldots,X_{10}]$, a precision $K\ge1$, and ten cyclic root points $P_i=n_iP$. The uniquely determined integers $n_i$ are proof objects, not rational query variables. Let $S_K$ contain $S_0,S_E,S_H$, all bad and ramified primes, the prime divisors of $c_\ast$, the contact-form coefficient and difference exceptions, and every prime at most $\max(K+1,10)$. It is fixed once $K$ and the arithmetic data are fixed. We will take $K=c\deg f$ in Section~\ref{sec:reduction}.

A witness group consists of
\[
 H_g=h_gP\ \ \t{and}\ \ U_{g,i}=z_{g,i}P\quad(1\le i\le10),
\]
with $H_g\ne O,[c_\ast]P$. Put $V_{g,0}=H_g-[c_\ast]P$ and $V_{g,i}=U_{g,i}-[c_\ast]P_i$. The integers $h_g,z_{g,i}$ also occur only in the proof. Define
\[
 \rho(Q)=\begin{cases}1/x(Q)&\t{if}\ Q\ne O,\\0&\t{if}\ Q=O,\end{cases}
\]
on cyclic points, and set
\begin{equation}\label{eq:directratios}
 L_{g,i}=\frac{\ell_K(z(U_{g,i}))}{\ell_K(z(H_g))}\ \ \t{and}\ \ 
 L'_{g,i}=\frac{\ell_K^\sigma(z(V_{g,i}^\sigma))}
 {\ell_K^\sigma(z(V_{g,0}^\sigma))}.
\end{equation}
Both truncated-logarithm denominators are required to be nonzero. These ratios are computed elements of $F$, not new unknowns. Write $\mathbf L_g=(L_{g,1},\ldots,L_{g,10})$.

Each group has the following 33 expressions $D_{g,j}$, in the displayed order and increasing index order within each row:
\begin{center}\small
\begin{tabular}{@{}p{8.8cm}rr@{}}
\toprule
Expressions & Number & $k_j$\\\midrule
$\rho(H_g),\rho(U_{g,i})$, $1\le i\le10$ &11&1\\
$\rho(V_{g,i}^\sigma)$, $0\le i\le10$ &11&1\\
$L_{g,i}-L'_{g,i}$, $1\le i\le10$ &10&$K$\\
$f(\mathbf L_g)$ &1&$K$\\\midrule
Total &33&\\\bottomrule
\end{tabular}
\end{center}
In particular there is no free rational source tuple and no exact equation $f(\mathbf L_g)=0$. The last row is a local precision test enforced by the pole certificates. The input polynomial has integer coefficients, so it can be evaluated in $F$ without introducing a new algebraic choice.

For a slope $t\notin\{\theta_0,\ldots,\theta_4,\xi\}$ and a specified nonzero rational square $\kappa$, put
\begin{equation}\label{eq:scaleddata}
 R=\kappa\frac{F_5(t)}{(t-\xi)^6},\qquad
 \widetilde D_{g,j}(t)=\frac{D_{g,j}}{(t-\xi)^{k_j+1}},
 \quad F_5(t)=\prod_{r=0}^4(t-\theta_r).
\end{equation}
For a nonempty finite set of group labels, its aggregate certificate uses Lemma~\ref{lem:packing} with $J=33$, input $R$, and the expressions $\widetilde D_{g,j}(t)$. The square factor $\kappa$ will be $1$ or $A^{-5}$, a computed expression rather than a new unknown. All inverses and group-law outputs are expressions, with their guards retained for compilation. Since $R\Theta\ne0$, the predicate input $B$ is nonzero. The final construction uses just one group.

\begin{lemma}\label{lem:local}
For every prime $p\notin S_K$, an aggregate certificate at $t$ implies
\begin{equation}\label{eq:localcongruence}
 p\mid M_{S_K}(t)\quad\Longrightarrow\quad
 p^K\mid f(n_1,\ldots,n_{10}).
\end{equation}
Here the radical uses the five finite contacts of \eqref{eq:contactchange}.
\end{lemma}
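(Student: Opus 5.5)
The plan is to pass from the odd contact at $p$ to an odd valuation of $R$. Lemma~\ref{lem:packing} then yields a place $\mathcal P\mid p$ and a group $g$ at which all 33 expressions are $\mathcal P$-adically deep. Finally, Lemma~\ref{lem:formal} turns these depths into a congruence for $f(n_1,\ldots,n_{10})$.

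\emph{Step 1: parity of $v_p(R)$.} Write $t=u/v$ in lowest terms. Then $F_5(t)=\prod_j(u-\theta_jv)/v^5$ and $(t-\xi)^6=(u-\xi v)^6/v^6$. Hence, up to the rational square $\kappa$ and fixed coefficients that are $p$-units because $p\notin S_K$,
\[
R\sim v\prod_{j=0}^4(u-\theta_jv)\,(u-\xi v)^{-6}.
\]
Suppose $p\mid M_{S_K}(t)$, so some normalized depth \eqref{eq:contactdepth} is odd, hence positive. The differences $\theta_j-\theta_l$ and $\theta_j-\xi$ are $p$-units, so $p\nmid v$ and exactly one contact form is divisible by $p$. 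Also $u-\xi v$ and the other four forms are $p$-units. Thus $v_p(R)$ is odd, and $t-\xi$ is a unit at every $\mathcal P\mid p$.

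\emph{Step 2: depth of the expressions.} Apply the first half of Lemma~\ref{lem:packing}; $p$ is unramified since $p\notin S_K$. This gives $(g,\mathcal P)$ with $v_{\mathcal P}(\widetilde D_{g,j})\ge k_j$. Since $t-\xi$ is a $\mathcal P$-unit, this means $v_{\mathcal P}(D_{g,j})\ge k_j$.

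The $\rho$-rows say that $H_g$ and the $U_{g,i}$ reduce to $O$ modulo $\mathcal P$. The conjugate rows say that $V_{g,0}^\sigma$ and the $V_{g,i}^\sigma$ reduce to $O$ modulo $\mathcal P$, that is, $V_{g,i}$ reduces to $O$ modulo $\mathcal P^\sigma$. So, at $\mathcal P$ in the $\sigma$-conjugate picture, Lemma~\ref{lem:formal} applies to the pairs $(z_{g,i},h_g)$. Likewise at $\mathcal P^\sigma$ it applies to $(z_{g,i}-c_\ast n_i,\,h_g-c_\ast)$, after transporting by $\sigma$. Put
\[
a_i=\frac{z_{g,i}}{h_g},\qquad a_i'=\frac{z_{g,i}-c_\ast n_i}{h_g-c_\ast}.
\]
By \eqref{eq:truncval}, $v_{\mathcal P}(L_{g,i})=v_p(a_i)$ and $v_{\mathcal P}(L'_{g,i})=v_p(a'_i)$.

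\emph{Step 3: integrality (main obstacle).} The hardest point is establishing $p$-integrality of $a_i$ and $a_i'$, so that \eqref{eq:formalapprox} applies. I would argue as follows.

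Suppose $v_p(a_i)<0$. Then $v_p(h_g)>0$. Since $c_\ast$ is a $p$-unit, $h_g-c_\ast$ is a $p$-unit, so $v_p(a_i')\ge0$. But $v_{\mathcal P}(L_{g,i}-L'_{g,i})\ge K\ge1$ forces $v_{\mathcal P}(L'_{g,i})=v_{\mathcal P}(L_{g,i})<0$, which is a contradiction.

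Symmetrically, suppose $v_p(a'_i)<0$. Then $v_p(h_g-c_\ast)>0$, so $h_g$ is a unit and $a_i$ is integral, and the same comparison gives a contradiction.

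With both integral, \eqref{eq:formalapprox} gives $L_{g,i}\equiv a_i$ and $L'_{g,i}\equiv a'_i$ modulo $\mathcal P^K$. Hence $a_i\equiv a'_i\pmod{p^K\Z_{(p)}}$.

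\emph{Step 4: the congruence.} The elementary identity
\[
(a_i-a'_i)(h_g-c_\ast)=c_\ast(n_i-a_i)
\]
holds. With $v_p(c_\ast)=0$ and $v_p(h_g-c_\ast)\ge0$, it yields $a_i\equiv n_i\pmod{p^K}$. Hence $L_{g,i}\equiv n_i\pmod{\mathcal P^K}$ for all $i$. Since $f$ has integer coefficients and all the $L_{g,i}$ are $\mathcal P$-integral, $f(\mathbf L_g)\equiv f(n_1,\ldots,n_{10})\pmod{\mathcal P^K}$. The last row gives $v_{\mathcal P}(f(\mathbf L_g))\ge K$. Therefore the rational integer $f(n_1,\ldots,n_{10})$ has $v_p\ge K$, which is \eqref{eq:localcongruence}.
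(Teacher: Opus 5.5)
Your proposal is correct and follows essentially the same route as the paper. You show that $v_p(R)$ is odd at the contact prime and use Lemma~\ref{lem:packing} to get a good pair $(g,\mathcal P)$. You get integrality of both index ratios because one of $h_g$, $h_g-c_\ast$ is a $p$-unit, together with the difference row and \eqref{eq:truncval}. You then use the anchor identity, where your $(a_i-a'_i)(h_g-c_\ast)=c_\ast(n_i-a_i)$ is just a rearrangement of the paper's $c_\ast n_i=hr_i-(h-c_\ast)r'_i$, to conclude $L_{g,i}\equiv n_i$.

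One point is left implicit: $h_g\ne0$ and $h_g\ne c_\ast$, which come from the point guards $H_g\ne O,[c_\ast]P$. Lemma~\ref{lem:formal} needs these because it requires a nonzero denominator index.
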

\begin{proof}
At an odd contact prime, $t$ is integral and congruent to exactly one $\theta_r$ with positive odd depth. Indeed, if $v_p(t)<0$, all five normalized depths are zero. The fixed difference exclusions show that $t-\xi$ and all other contact factors are units. Thus $v_p(R)$ is odd, since $\kappa$ is a square, and $v_{\mathcal P}(\widetilde D_{g,j})=v_{\mathcal P}(D_{g,j})$ at every prime ideal ${\mathcal P}\mid p$. Lemma~\ref{lem:packing} forces $v_p(R)>0$ and supplies a pair $(g,{\mathcal P})$ at which all 33 original expressions have the required precision. 

The first two rows make $H,U_i$ and $V_0^\sigma,V_i^\sigma$ formal at ${\mathcal P}$: at good reduction, $v_{\mathcal P}(1/x(Q))>0$ for a nonzero point means that it reduces to $O$. The remaining rows give
\begin{equation}\label{eq:directlocaltests}
 L_i-L'_i\in p^K\OO_{\mathcal P}\ \ \t{and} \ \ f(\mathbf L)\in p^K\OO_{\mathcal P}.
\end{equation}
Write $h=h_g$ and $z_i=z_{g,i}$, and define the rational index ratios
\[
 r_i=\f{z_i}h\ \ \t{and}\ \ r'_i=\f{z_i-c_\ast n_i}{h-c_\ast}.
\]
The two denominators are nonzero by the point guards. Since $p\nmid c_\ast$, at least one of $h$ and $h-c_\ast$ is a $p$-adic unit.

If $h$ is a unit, then $r_i\in\Z_{(p)}$ and Lemma~\ref{lem:formal} makes $L_i$ integral at $\mathcal P$. If $h-c_\ast$ is a unit, its conjugate version instead makes $L'_i$ integral. In either case \eqref{eq:directlocaltests} makes both ratios integral. Equation~\eqref{eq:truncval} now implies that both $r_i,r'_i$ are $p$-integral, so \eqref{eq:formalapprox} applies to both and gives
\[
 r_i\equiv L_i\ \ \t{and}\ \ r'_i\equiv L'_i\pmod{p^K\OO_{\mathcal P}}.
\]
Multiply by the respective integral denominators and subtract:
\begin{equation*}
 c_\ast n_i
 =h r_i-(h-c_\ast)r'_i
 \equiv hL_i-(h-c_\ast)L'_i
 \equiv c_\ast L_i\pmod{p^K\OO_{\mathcal P}}.
\end{equation*}
Division by the unit $c_\ast$ yields $L_i\equiv n_i\pmod{p^K\OO_{\mathcal P}}$ for every $i$. Zero numerators, including $U_i=O$ or $V_i=O$, are included in this argument.

Polynomial evaluation with integral coefficients preserves these congruences. Explicitly, for a monomial one can telescope the difference of the two products, and each difference of powers is divisible by $L_i-n_i$. Hence
\[
 f(\mathbf n)\equiv f(\mathbf L)\equiv0\pmod{p^K\OO_{\mathcal P}}.
\]
Since $f(\mathbf n)$ is a rational integer and $v_{\mathcal P}(p)=1$, this proves \eqref{eq:localcongruence}.  The successful group and prime ideal may vary with $p$; the integer tuple and the conclusion do not.
\end{proof}

\section{Simultaneous avoidance in quadratic prime patterns}\label{sec:primes}

For sufficiently large primes $p$ satisfying
\begin{equation}\label{eq:primecongruences}
 p\equiv1\pmod8\ \t{and}\  p\equiv3\pmod7,
\end{equation}
the two prime ideals of $\OO_F$ above $p$ give opposite quadratic twists of $y^2=x^3-x$: the product of their twisting parameters is 7, a nonsquare modulo $p$. Their reduction-group orders form the unordered pair
\begin{equation}\label{eq:orders}
 \{d_1(p),d_2(p)\}=\{p+1-2a,p+1+2a\},
\end{equation}
where we write $p=a^2+b^2$ with $a$ odd and $b$ even.
These are the complex-multiplication formulas used in \cite[Section 6.1]{OAI}. Both orders are divisible by four; their difference is $4a$, so exactly one has 2-adic valuation two. For large $p$, the orders are positive and less than $2p$, and divisibility by four rules out equality with $p$. Hence $p\nmid d_1(p)d_2(p)$.

For a Gaussian prime generator $\pi=a+bi$, write
\begin{equation}\label{an:eq:orders}
 d^+(p)=(a+1)^2+b^2\ \ \t{and}\ \  d^-(p)=(a-1)^2+b^2.
\end{equation}
Conjugation of $\pi$ does not change these integers. The congruence $\pi\equiv9\pmod{16}$ gives $v_2(d^+)=2$ and $v_2(d^-)=6$, so it fixes a common orientation: $d_1=d^+$ and $d_2=d^-$. The next theorem supplies all primes needed for one shared witness group.

\begin{theorem}\label{an:thm:patterns}
Let $c_1,\ldots,c_k$ be distinct integers, $k\ge2$. Let $q\ge1$, let $M$ be odd and squarefree with $7\nmid M$, and prescribe positive lower bounds $Y,H$. There are positive integers $u,w$, with $q\mid w$ and $u/w^2>H$, such that all
\[
 p_j=u+c_jw^2\quad(1\le j\le k)
\]
are primes exceeding $Y$. Each has a Gaussian generator $\pi_j$ satisfying
\begin{equation}\label{an:eq:ray}
 \pi_j\equiv9\pmod{16},\qquad
 \pi_j\equiv1+3i\pmod7,\qquad
 \pi_j\equiv i\pmod M.
\end{equation}
The self-gcds equal four. For $j\ne h$, any common factor of either order at $p_j$ and either order at $p_h$ is a power of two. In particular,
\begin{equation}\label{an:eq:allcross}
 \gcd(d^+(p_j),d^-(p_h))=4\qquad(1\le j,h\le k).
\end{equation}
All orders are coprime to $M$.
\end{theorem}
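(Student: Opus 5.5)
The plan is to prove Theorem~\ref{an:thm:patterns} by a weighted counting argument over pairs $(u,w)$ in a box
\[
 N<u\le 2N,\qquad T<w\le 2T,\qquad q\mid w,
\]
with $N\ge 2HT^2$, so that $u/w^2>H$ holds automatically. Each prime $p_j=u+c_jw^2$ splits in $\Z[i]$, and its orders are norms:
\[
 d^\pm(p_j)=\Nm(\pi_j\pm1).
\]
So every condition in the statement becomes a condition on the Gaussian generators $\pi_j$, which are linked through the linear forms $u+c_jw^2$. The first step reduces the local conditions to ray-class conditions. The condition $\pi_j\equiv i\pmod M$ gives $\Nm(\pi_j\pm1)\equiv\Nm(i\pm1)=2$ modulo every $\ell\mid M$, so the orders are coprime to $M$. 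The conditions modulo $16$ and $7$ fix the orientation and the splitting behaviour of \eqref{eq:primecongruences}. For the self-gcd, note that a common odd factor $\ell$ of $d^+$ and $d^-$ divides both $4a$ and $p+1\pm2a$. Hence $\ell\mid a$ and $\ell\mid b^2+1$. This is excluded by adding a further congruence on $\pi_j$ modulo each odd prime $\ell\le L$, where $L$ is a cutoff fixed first. Together with the $2$-adic computation $v_2(d^+)=2$, $v_2(d^-)=6$, this gives self-gcd exactly four. The cross conditions for primes $\ell\le L$ are handled the same way: modulo $W_L=\prod_{\ell\le L}\ell$ we restrict each $\pi_j$ to ray classes with $\pi_j\not\equiv\pm1$ modulo every prime ideal above $\ell$. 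Such fibres form a positive proportion of about $(\log L)^{-O(1)}$ of all classes, and they are compatible with \eqref{an:eq:ray} by the Chinese Remainder Theorem, since $7\nmid M$ and $M$ is odd.

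The second step is the main term. We count $(u,w)$ such that all $k$ values $u+c_jw^2$ are primes whose Gaussian generators lie in the chosen ray-class fibres modulo $16\cdot7\cdot M\cdot W_L$. Following the Tao--Ziegler transference for polynomial patterns \cite{TZ,TZpoly}, we compare a von Mangoldt-type weight restricted to each fibre with a pseudorandom majorant. The input that replaces the plain prime-counting linear-forms estimates is the linear Artin-symbol theorem of Frei and Tinkov\'a \cite[Theorem 1.4]{FT}. It gives the correct asymptotic count for linear systems of primes with prescribed Frobenius classes in the fixed ray-class field of $\Q(i)$. Since $L$, and hence the field, is fixed before $N,T\to\infty$, no uniformity in the field is needed. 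The target is a lower bound of order $NT/(\log L)^{4k}$ for the weighted count. This step is the main obstacle, because one has to check that the Tao--Ziegler argument goes through with Artin-symbol-restricted weights, in particular that the dual-function and Gowers-norm estimates survive the restriction to ray classes.

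The third step bounds the bad weight, meaning pairs for which some odd prime $\ell>L$ divides an order at $p_j$ and an order at $p_h$ with $j\ne h$. For a fixed $\ell$, the condition $\ell\mid\Nm(\pi_j+\varepsilon)$ and $\ell\mid\Nm(\pi_h+\varepsilon')$ is a pair of quadric congruences in the underlying variables. For $L<\ell\le N^{\delta}$ we use an upper-bound Selberg sieve for the prime conditions, together with the Browning--Loughran estimate \cite{BL}, which gives about $\ell^{-2+o(1)}$ for the density of solutions. Summing over $\ell>L$ gives $O(NT/(L\log L))$. For $\ell>N^\delta$ the condition describes a codimension-two subvariety, and the large-prime geometric sieve of \cite{BHB} gives $o(NT)$. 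Choosing $L$ large enough that $1/(L\log L)$ is much smaller than $(\log L)^{-4k}$, and then letting $N,T\to\infty$, the good weight is positive. This gives pairs $(u,w)$ with all $p_j>Y$ prime and satisfying all the gcd conditions, including \eqref{an:eq:allcross}.
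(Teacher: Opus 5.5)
Your outline has the same architecture as the paper. It uses good ray-class fibres at every odd $\ell\le L$, with at most four bad residues per norm fibre. It feeds the Tao--Ziegler transference with the Frei--Tinkov\'a theorem to get a weighted lower bound $\gg NT/(\log L)^{4k}$, with $L$ (and hence the ray class field) fixed first. It bounds the bad weight by $O(NT/(L\log L))+o(NT)$, using a Selberg sieve with Browning--Loughran for $L<\ell\le N^{\delta}$ and a geometric sieve beyond.

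\textbf{Gap: self-gcds at primes above $L$.} In your first step you assert that the congruences at odd $\ell\le L$, together with $v_2(d^+)=2$ and $v_2(d^-)=6$, give $\gcd(d^+(p_j),d^-(p_j))=4$. They do not. Write $p_j=a_j^2+b_j^2$. Any odd prime $\ell>L$ with $\ell\mid a_j$ and $\ell\mid b_j^2+1$ divides both orders, and nothing prevents such $\ell$ anywhere in $(L,\sqrt{p_j}\,]$. Your third step defines the bad weight only through cross pairs $j\ne h$, so these self-obstructions are never removed. The theorem's assertion that the self-gcds equal four is therefore unproved.

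The repair, as in Proposition~\ref{an:prop:fulltail}, is to put self-obstructions into the bad weight:
\begin{itemize}
\item For medium $\ell$, sieve in the variables $(a,b,w)$. The collision set modulo $\ell$ has at most $2\ell$ points, so its density is $\ll\ell^{-2}$. Use a Selberg sieve on the $k$ forms $a^2+b^2+\gamma_jw^2$, with ordinary lattice-point counts in residue classes in place of Browning--Loughran.
\item For large $\ell$, apply \cite[Lemma 2.1]{BHB} to the coprime pair $a$, $b^2+1$.
\end{itemize}

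\textbf{The large-prime cross estimate is not a direct citation.} The quadric theorem \cite[Theorem 1.2]{BHB} is stated for homogeneous auxiliary forms. Your expressions $(a+\varepsilon)^2+b^2$ and $(c+\eta)^2+d^2$ are not homogeneous, and homogenizing adds a variable and spoils the five-variable count.

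The paper proves the needed bound (Theorem~\ref{an:thm:tail}) by hand:
\begin{itemize}
\item It rewrites the quadric as $x_0x_1=\kappa w^2-xz$.
\item It treats primes dividing both $x_0$ and $x_1$ by divisor counting.
\item In the charts $t=a-c$ and $t=a+c$, it replaces the pair of order expressions by explicitly coprime pairs, such as $\kappa w^2+2t$ and $(t^2+x^2)(t^2+z^2)$.
\item It checks that this coprimality, including the absence of a common constant prime factor, survives the lattice substitution.
\item Only then does it apply \cite[Lemma 4.1]{BHB} and \cite[Lemma 2.1]{BHB}.
\end{itemize}
Your ``codimension two'' is the right heuristic, but this verification is where the work in that step lies.
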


The theorem concerns any fixed finite $k$; its constants need not be uniform in $k$, the coefficients, or $M$. Repeated positions can be deleted before applying it. Prescribed square-denominator dilations are included by replacing $c_j$ by $q^2c_j$ and then replacing the parameter by $qw$.

The proof uses the uniform quadric congruence estimate of Browning--Loughran \cite[Theorem 4.1]{BL}, the polynomial transference estimates of Tao--Ziegler \cite{TZpoly}, the unconditional linear Artin-symbol theorem of Frei--Tinkov\'a \cite[Theorem 1.4]{FT}, and the two elementary geometric-sieve inputs of Browning--Heath-Brown \cite[Lemmas 2.1 and 4.1]{BHB}. The needed nonhomogeneous large-prime estimate is proved in Appendix~\ref{an:app:large}. Section~\ref{an:sec:transfer} gives the transfer argument for the modified prime weights, rather than assuming that a linear theorem already covers quadratic patterns.

\subsection{Local counts and primitive quadric congruences}\label{an:sec:counts}
Fix $\kappa\in\Z\setminus\{0\}$ and write
\begin{equation}\label{an:eq:quadric}
 Q_\kappa=a^2+b^2-c^2-d^2-\kappa w^2.
\end{equation}
Set $G_\eps=(a+\eps)^2+b^2$, $H_\eta=(c+\eta)^2+d^2$, where $\eps,\eta\in\{1,-1\}$. For an odd $\ell\nmid\kappa$, the nonzero quadric residue set has $\ell^4-1$ elements. The collision set
\[
 \Omega_\ell=\{Q_\kappa=G_\eps=H_\eta=0\}\subset\F_\ell^5
\]
contains no zero vector and has at most $4\ell^2$ elements. If $\ell\equiv3\pmod4$, it has exactly one element. Indeed, in the split case the four branches $a+\eps=\pm ib$, $c+\eta=\pm id$ each leave a single equation, linear in $d$, in $b,d,w$. In the inert case both shifted norm-zero equations force $(a,b,c,d)=(-\eps,0,-\eta,0)$, and then $w=0$.

Let $L_j=a^2+b^2+\gamma_jw^2$, where the distinct integers $\gamma_1,\ldots,\gamma_k$ include $0,-\kappa$. Outside a fixed finite prime set $S$, containing the divisors of $2\kappa\prod_{i<j}(\gamma_i-\gamma_j)$, the fraction $g(p)$ of nonzero quadric residues on which $\prod_jL_j=0$ is
\begin{equation}\label{an:eq:localg}
 g(p)=\frac{k+2\chi(p)}p+O_k(p^{-2})\ \t{with}\  \chi(p)=\left(\frac{-1}p\right)=(-1)^{(p-1)/2}.
\end{equation}
Here and below $S$ may be enlarged by finitely many small primes. For an explicit verification, put $\nu_0=p+(p-1)\chi(p)$. The numerator of $g(p)$ is
\begin{equation}\label{an:eq:exactlocal}
 (p-1)(p-\chi(p))\bigl(2\nu_0+(k-2)(p-\chi(p))\bigr)+\nu_0^2-1,
\end{equation}
and the denominator is $p^4-1$. For an intermediate $L_j$, its zero count is $(p-1)(p-\chi)^2+\nu_0^2$; for either norm position it is $\nu_0((p-1)(p-\chi)+\nu_0)$. All pairwise intersections coincide with $w=0$ and both norms zero, a set of size $\nu_0^2$. Inclusion--exclusion and removal of the origin give \eqref{an:eq:exactlocal}.

Let $\psi$ be a nonnegative smooth function of compact support away from the singular point of the real cone. The real density is
\[
 \sigma_\infty(\psi)=\int_{Q_\kappa(\mathbf x)=0}
       \frac{\psi(\mathbf x)}{\|\nabla Q_\kappa(\mathbf x)\|}\,dS(\mathbf x),
\]
where $dS$ is Euclidean surface measure. For a prime $p$, the full cone density is
\[
 \sigma_p=\lim_{e\to\infty}p^{-4e}
       \#\{\mathbf x\bmod p^e:Q_\kappa(\mathbf x)=0\}.
\]
These are the normalizations in \cite[Section 4]{BL}. For squarefree $m$ supported outside $S$, choose arbitrary subsets $\Omega_p$ of the nonzero quadric residues. Browning--Loughran's estimate, followed by primitive-vector inversion, gives
\begin{equation}\label{an:eq:primitive}
 \sum_{\substack{\mathbf x\in\Z^5_{\rm prim},\ Q_\kappa(\mathbf x)=0\\
                  \mathbf x\bmod p\in\Omega_p\ (p\mid m)}}
 \psi(\mathbf x/B)
 =C_\kappa\sigma_\infty(\psi)B^3
     \prod_{p\mid m}\frac{|\Omega_p|}{p^4-1}
 +O_{\kappa,\psi,\varepsilon}(B^{5/2+\varepsilon}m^{5/2+\varepsilon}).
\end{equation}
The constant $C_\kappa>0$ is independent of $m,\psi$ and the residue subsets. Here $\Z^5_{\rm prim}$ denotes vectors with coordinate gcd one.

We detail the inversion because the collision conditions are not homogeneous. For a common scaling divisor $h$, the congruence subsets become $h^{-1}\Omega_p$. If $p\mid h$ there is no contribution, since $0\notin\Omega_p$; otherwise their cardinalities are unchanged. Apply \cite[Theorem 4.1]{BL} at scale $B/h$ and sum with $\mu(h)$, where $\mu$ denotes the M\"obius function in this sieve argument. The errors are summable since $\sum h^{-5/2-\varepsilon}<\infty$. The main sum contributes $\prod_{p\nmid m}(1-p^{-3})$, with a truncation error $O(B)$. At a good prime the full cone density is
\[
 \sigma_p=\frac{1-p^{-4}}{1-p^{-3}}.
\]
This identity follows by separating primitive residues from vectors divisible by $p$. Thus the primitive local factors are $1-p^{-4}$ and the prescribed-residue ratio is exactly $|\Omega_p|/(p^4-1)$. Finitely many bad-prime factors are absorbed into $C_\kappa$. The uniformity in the subsets in the published theorem justifies applying it to every $h^{-1}\Omega_p$.

\subsection{The intermediate prime range}\label{an:sec:medium}
We prove an upper bound retaining all $k$ prime conditions. Work in a region with both norm coordinates between fixed positive multiples of $B^2$, with every $L_j$ between such multiples, and with $|w|\le\tau B$, where $0<\tau\le\tau_0$. Choose a smooth majorant $\psi_\tau$ with this support enlarged by fixed factors and with
\begin{equation}\label{an:eq:realmass}
 \sigma_\infty(\psi_\tau)\ll_\kappa\tau.
\end{equation}
The implied constant can be uniform in $\tau$. To see this, use polar coordinates in $(a,b)$ and $(c,d)$ on the support. Integrating the delta constraint $r_1^2-r_2^2-\kappa w^2=0$ cancels the factor $r_2$ in the second polar measure. The remaining radial and angular integrals are bounded, and the $w$-interval has length $O(\tau)$. Bounds for derivatives of $\psi_\tau$ may depend on $\tau$; they will remain in error constants.

\begin{lemma}\label{an:lem:G}
For $g$ in \eqref{an:eq:localg}, after a fixed enlargement of $S$ we have $0<g(p)<1$. Put $h(p)=g(p)/(1-g(p))$, extend $h$ multiplicatively on squarefree integers, and omit a further arbitrary prime $\ell\notin S$. Then
\[
 G_\ell(z):=\sum_{\substack{d<z\text{ squarefree}\\p\mid d\Rightarrow p\notin S\cup\{\ell\}}}h(d)
 \gg (\log z)^k
\]
uniformly in $\ell$.
\end{lemma}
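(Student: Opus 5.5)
First, 0<g(p)<1 once S is enlarged. By \eqref{an:eq:localg} we have g(p)=(k+2\chi(p))/p+O_k(p^{-2}), so it suffices to check that k+2\chi(p)\ge k-2\ge0. For k\ge3 the numerator is positive. For k=2 the sign needs a separate check: the exact numerator \eqref{an:eq:exactlocal} gives g(p)>0 for every good p, since the term \nu_0^2-1\ge0 and the first product is nonnegative and positive for large p. The upper bound g(p)<1 holds for p larger than a constant depending on k. All smaller primes are put into S. Then h(p)=g(p)/(1-g(p)) is positive and h(p)=(k+2\chi(p))/p+O_k(p^{-2}).

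Next, compare G_\ell(z) with a standard sieve sum of dimension k. Since \chi(p) averages to zero over primes by Dirichlet's theorem for the modulus 4, Mertens-type estimates give
\[
\sum_{p<x,\ p\notin S}h(p)\log p=k\log x+O(1),\qquad \sum_{p\notin S}h(p)^2\log p<\infty .
\]
The first sum rests on \sum_{p<x}\chi(p)\log p/p=O(1), a consequence of L(1,\chi)\ne0. I will use the standard lower bound for such sums of multiplicative functions (Rankin, or the Halberstam--Richert Lemma 4.1 type estimate): if h\ge0 is supported on squarefree integers and satisfies a one-sided dimension condition \sum_{w\le p<x}h(p)\log p\le k\log(x/w)+A together with the above, then
\[
\sum_{d<z}\mu^2(d)h(d)\sim c_h(\log z)^k/k!,\qquad c_h=\prod_p(1+h(p))(1-1/p)^k>0 .
\]
This would be proved through the Dirichlet series \sum h(d)d^{-s}=\zeta(s+1)^kL(s+1,\chi)^2\cdot(\text{absolutely convergent factor}) near s=0, or by the elementary convolution argument. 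A cheaper route suffices for the lower bound alone: G(z)\ge\prod_{p<z^{1/k'}}(1+h(p)) minus a tail handled by Rankin's trick. Convergence of the product defining c_h uses only the O(p^{-2}) term and conditional convergence of \sum\chi(p)/p; the product is positive because every factor is.

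Uniformity in \ell is where I expect most of the care to lie. Removing \ell multiplies the singular constant by (1+h(\ell))^{-1}\ge(1+h_{\max})^{-1}, where h_{\max}=\sup_{p\notin S}h(p) is bounded once S is enlarged. For the finite-z statement I would use the pointwise inequality
\[
G(z)\le\sum_{m\in\{1,\ell\}}h(m)\,G_\ell(z/m)\le(1+h(\ell))\,G_\ell(z),
\]
which holds because G_\ell is increasing. It gives G_\ell(z)\ge G(z)/(1+h_{\max})\gg(\log z)^k, with constants independent of \ell. The main obstacle is the honest lower bound G(z)\gg(\log z)^k when \chi-oscillation is present, that is, getting a uniform constant for all z\ge2 and not just asymptotically. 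I would handle this with the Dirichlet-series asymptotic for z large together with positivity (G(z)\ge1) for bounded z.
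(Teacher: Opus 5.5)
Your proof is correct, but it is organized differently from the paper's. The paper never separates out $\ell$. It shows directly that the truncated Euler product $\prod_{p\le y,\,p\notin S\cup\{\ell\}}(1+h(p))\asymp(\log y)^k$ uniformly in $\ell$, since the single omitted factor $1+h(\ell)$ is bounded. It then takes $y=z^a$ and views the expanded product as a probability distribution on squarefree $d$. The mean of $\log d$ is $\sum_{p\le y}g(p)\log p\ll_k a\log z$, so Markov's inequality puts at least half the mass on $d<z$.

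You instead dispose of $\ell$ at the outset through the exact identity $G(z)=G_\ell(z)+h(\ell)G_\ell(z/\ell)\le(1+h(\ell))G_\ell(z)$. This makes uniformity in $\ell$ immediate. You then lower-bound the $\ell$-free sum in one of two ways. The first is the Halberstam--Richert asymptotic, which needs the two-sided dimension condition, i.e.\ $\sum_{w\le p<x}\chi(p)\log p/p=O(1)$, coming from $L(1,\chi)\ne0$. The second is Rankin's trick, which is essentially the paper's truncation argument with an exponential moment in place of the first moment.

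The asymptotic route gives the precise constant $c_h/k!$, but it imports a heavier theorem than the elementary Mertens-plus-Markov argument the paper needs. Your explicit check of $g(p)>0$ in the degenerate case $k=2$, $\chi(p)=-1$, where the exact formula gives $g(p)=2/(p^2+1)$, fills in a point the paper leaves implicit. One small wording fix: $c_h>0$ because the series $\sum_p\log\bigl((1+h(p))(1-1/p)^k\bigr)$ converges, not merely because each factor is positive.
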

\begin{proof}
The classical prime estimates
\[
 \sum_{p\le y}\frac1p=\log\log y+O(1),\qquad
 \sum_{p\le y}\frac{\chi(p)}p=O(1),\qquad
 \sum_{p\le y}\frac{\log p}{p}\ll\log y
\]
follow from Mertens' theorem and the prime number theorem in the fixed progressions modulo four; see \cite{Dav}. Together with \eqref{an:eq:localg} they give
\[
 \prod_{\substack{p\le y\\p\notin S\cup\{\ell\}}}(1+h(p))\asymp(\log y)^k
\]
uniformly in the omitted prime; its single factor is bounded. Take $y=z^a$ for a sufficiently small fixed $a>0$. Normalize the terms in this Euler product to a probability distribution on squarefree products $d$ of these primes. Its expected $\log d$ is $\sum g(p)\log p\ll_k\log y$. With $a$ small enough it is at most $\tfrac12\log z$. At least half the mass therefore has $d<z$, up to an immaterial adjustment of $a$. These terms occur in $G_\ell(z)$.
\end{proof}

For fixed $\ell\notin S$, impose the collision residues $\Omega_\ell$ and sieve $\prod_jL_j$ by primes outside $S\cup\{\ell\}$. Every vector with both norm coordinates prime is primitive, so \eqref{an:eq:primitive} applies to all vectors being counted. For squarefree $d$ coprime to $\ell$, the count with $d\mid\prod_j L_j$ has the form
\begin{equation}\label{an:eq:sievesequence}
 A_{\ell,d}=X_\ell g(d)+r_{\ell,d},\qquad
 X_\ell\ll\frac{\tau B^3}{\ell^2},\qquad
 r_{\ell,d}\ll_{\tau,\varepsilon} B^{5/2+\varepsilon}(\ell d)^{5/2+\varepsilon}.
\end{equation}
All fixed polynomial data may enter these constants, but $\ell,d$ do not.

For clarity, the required Selberg weights can be written explicitly:
\[
 \lambda_d=\frac{\mu(d)}{G_\ell(z)}
   \prod_{p\mid d}(1-g(p))^{-1}
   \sum_{\substack{r<z/d\\(r,d)=1}}h(r),
\]
with all integers in the sums squarefree and supported on the sieving primes; other weights are zero. They satisfy $\lambda_1=1$, $|\lambda_d|\ll_\varepsilon d^\varepsilon$, and
\begin{equation}\label{an:eq:selbergmain}
 \sum_{d,e<z}\lambda_d\lambda_e g([d,e])=G_\ell(z)^{-1}.
\end{equation}
Here $[d,e]=\lcm(d,e)$. The identity follows by writing the quadratic form as
\[
 \sum_r\frac1{h(r)}
       \(\sum_{r\mid d}\lambda_dg(d)\)^2
\]
and applying finite divisor inversion. The bound for $\lambda_d$ follows from $g(p)\ll_k1/p$ and the fact that its inner sum is at most $G_\ell(z)$.

On prime vectors, every $L_j\asymp B^2$ exceeds $z$, so the square of $\sum_{d\mid\prod L_j}\lambda_d$ is one. It bounds their indicator from above on the whole sequence. Applying \eqref{an:eq:selbergmain} and Lemma~\ref{an:lem:G} yields
\begin{equation}\label{an:eq:oneell}
 \#\{\text{prime vectors with collision at }\ell\}
 \ll \frac{\tau B^3}{\ell^2(\log z)^k}
 +O_{\tau,\varepsilon}\bigl(B^{5/2+\varepsilon}\ell^{5/2+\varepsilon}z^{7+4\varepsilon}\bigr).
\end{equation}
For the error, use $[d,e]\le de$ and sum the absolute values in \eqref{an:eq:sievesequence}; no cancellation of remainders is assumed.

Set $z=B^{1/100}$ and $\delta=1/100$. Sum \eqref{an:eq:oneell} over $L<\ell\le B^\delta$, with $L$ beyond $S$. Since $\sum_{\ell>L}\ell^{-2}\ll1/(L\log L)$, the main term is
\begin{equation}\label{an:eq:mediummain}
 \ll\frac{\tau B^3}{(\log B)^kL\log L}.
\end{equation}
With, for example, $\varepsilon=1/1000$, the summed error has exponent at most
\[
 \frac52+\varepsilon+\frac{7+4\varepsilon}{100}
       +\frac{7/2+\varepsilon}{100}<\frac{27}{10}.
\]
Thus its contribution, even after multiplication by $(\log B)^k$, is $O_\tau(B^{14/5})$. The constants in the main term are independent of $\tau,L$.

The self-gcd case uses an ordinary lattice sieve in $(a,b,w)$, because an odd prime dividing both orders divides $a$ and $b^2+1$. The collision set modulo $\ell$ has at most $2\ell$ elements. For the prime forms $a^2+b^2+\gamma_jw^2$, with exactly one $\gamma_j=0$, their union of zero loci has density
\[
 g_0(p)=\frac{k+\chi(p)}p+O_k(p^{-2}).
\]
Indeed its exact cardinality is $(k-1)p^2+(p-k+1)\nu_0$. The same Euler-product proof gives sieve denominator $\gg(\log z)^k$. For arbitrary residue subsets modulo $m$, smooth lattice counting has error $O_\tau(B^2m+Bm^2+m^3)$, obtained by summing the boundary error of the residue-class boxes. Taking $m=\ell[d,e]\le B^{3/100}$, and summing over $\ell,d,e$, gives an error smaller than $B^{27/10}$. This proves the counterpart of \eqref{an:eq:mediummain} for self-gcds as well.

\begin{proposition}\label{an:prop:fulltail}
Fix distinct integers $c_1,\ldots,c_k$. For $N=B^2$, consider
\[
 N/2<u\le N,\qquad 1\le w\le\tau B,
 \qquad p_j=u+c_jw^2,
\]
with $\tau_0$ small enough that all $p_j\asymp N$. Restrict, if desired, to any fixed ray-class conditions under which the norms have chosen Gaussian generators. Give each prime pattern weight $\prod_j\log p_j$. There are constants $L_0,C$ independent of $\tau,L$ such that the total weight of patterns with a self- or cross-order obstruction prime greater than $L\ge L_0$ is at most
\begin{equation}\label{an:eq:weightedtail}
 \frac{C\tau B^3}{L\log L}
       +O_\tau(B^{3-1/400}).
\end{equation}
An obstruction means an odd common divisor of $d_j^+,d_j^-$, or of an order at one position and an order at another.
\end{proposition}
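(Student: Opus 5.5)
We partition the obstruction primes $\ell>L$ into three ranges: the intermediate range $L<\ell\le B^\delta$ with $\delta=1/100$, a large range $B^\delta<\ell\le B^{2-\delta'}$, and a huge range near the size of the orders, which are $\asymp N=B^2$. Since every $p_j$ is prime and $p_j\asymp N$, the weight $\prod_j\log p_j$ is $\ll(\log B)^k$. It therefore suffices to count prime patterns and multiply by $(\log B)^k$.

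\textbf{Intermediate range.} Each pattern is encoded by Gaussian generators. Writing $p_j=a_j^2+b_j^2$ with $p_j-p_h=(c_j-c_h)w^2$, a cross obstruction between positions $j,h$ is a vector $(a,b,c,d,w)$ on a quadric $Q_\kappa$ with $\kappa=c_h-c_j$, lying in the collision set $\Omega_\ell$. The remaining positions $u+c_iw^2$ are forms $a^2+b^2+\gamma_iw^2$. This is exactly the setting of \eqref{an:eq:oneell}. After restricting to dyadic boxes in which the norm coordinates are $\asymp B^2$, the sum \eqref{an:eq:mediummain} bounds the cross contribution by $\tau B^3/((\log B)^kL\log L)+O_\tau(B^{14/5})$. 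Multiplying by $(\log B)^k$ and summing over the $O(k^2)$ pairs and the four sign choices $(\eps,\eta)$ gives the main term $C\tau B^3/(L\log L)$. Self-gcds are handled by the lattice-sieve counterpart already established. The ray-class restriction only removes vectors, so the upper bound persists.

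\textbf{Large range.} For $\ell>B^\delta$ we drop the sieve and count solutions directly. A common odd divisor $\ell$ of $G_\eps=(a+\eps)^2+b^2$ and $H_\eta=(c+\eta)^2+d^2$ means that $\ell$ divides two shifted norms of size $\ll B^2$. Fix $w$ and the residue data. Then $a+\eps+ib$ and $c+\eta+id$ are both divisible by a Gaussian prime $\lambda$ above $\ell$, so each lies in a lattice of index $\ell$. Counting pairs of lattice points in discs of radius $B$ subject to the single quadric relation $Q_\kappa=0$ gives roughly $B^{4-1}\cdot\ell^{-2}$ pairs per $w$, with divisor-type losses. This is the content of the nonhomogeneous large-prime estimate proved in Appendix~\ref{an:app:large}, which is built on \cite[Lemmas 2.1 and 4.1]{BHB}.

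Summing over $w\le\tau B$ and $\ell>B^\delta$ would give $\ll B^{3-\delta+\varepsilon}$, which is $O_\tau(B^{3-1/400})$ after absorbing the logarithmic weights. The same lattice argument in $(a,b,w)$ handles self-gcds, where $\ell\mid a$ and $\ell\mid b^2+1$. For $\ell$ comparable to $B^2$, the divisibility forces $G_\eps/\ell$ and $H_\eta/\ell$ to be bounded, so the count is a count of points on lower-dimensional varieties. Again $O(B^{3-c})$ follows from the geometric sieve input of \cite{BHB}.

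\textbf{Main obstacle.} The hard step is the large range: obtaining a uniform power saving in $\ell$ across all of $(B^\delta,B^2]$ for the nonhomogeneous shifted norms. The lattice count degenerates when $\ell$ exceeds $B$, since the lattices then have index larger than the box. There I would first try switching roles. I would fix the cofactors $G_\eps/\ell$ and $H_\eta/\ell$, which are $\le B^{2-\delta}$, and count representations by the divisor bound. Failing that, I would take the exponent $1/400$ from whatever saving the appendix estimate provides.
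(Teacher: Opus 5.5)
Your proposal follows the paper's proof: encode a pattern with a cross obstruction at positions $j,h$ injectively as a vector on $Q_\kappa$, bound primes $L<\ell\le B^{1/100}$ by \eqref{an:eq:mediummain} times $(\log B)^k$ over the finitely many pairs and sign choices (using the lattice-sieve analogue for self-gcds), drop the ray conditions, and send all larger primes to the Appendix. The ``main obstacle'' you flag does not arise: Theorem~\ref{an:thm:tail} counts vectors with \emph{some} common prime $\ell>L$, with no upper limit on $\ell$, because its first case and the box sieve \eqref{an:eq:box} (applied after fixing $t=a\mp c$ and covering $q\mid R$ by lattices) treat all such primes uniformly; so Corollary~\ref{an:cor:weightedlarge} with $\delta=1/100$ already gives $O(B^{3-1/200})$ for the whole range $\ell>B^{1/100}$, and your separate ``huge range'' and index-$\ell$ lattice heuristic are unnecessary (that heuristic's per-$w$ count should in any case be $B^2\ell^{-2}$, not $B^3\ell^{-2}$).
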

\begin{proof}
For a fixed cross pair, choose its two Gaussian representations and set $\kappa=c_j-c_h$. Their coordinates lie on \eqref{an:eq:quadric}; every other prime form has coefficient $\gamma_i=c_i-c_j$. A pattern maps injectively to such vectors after a deterministic choice of generators: $u$ is recovered from the first norm and $w$. There are finitely many pairs and sign choices. Apply the medium-prime estimate to each, multiplying by $O((\log B)^k)$. Apply Corollary~\ref{an:cor:weightedlarge} for primes greater than $B^{1/100}$. Its bound is $O(B^{3-1/200})$, which is absorbed by the displayed error. The self-gcd large-prime bound is included in Appendix~\ref{an:app:large}. Ray restrictions may be dropped in upper bounds. If $L>B^{1/100}$, the large-prime estimate alone suffices.
\end{proof}

\section{Polynomial patterns with fixed Artin conditions}\label{an:sec:transfer}
\subsection{The weights and the counting statement}
Let $\mathcal K/\Q$ be a fixed finite Galois extension. Write $\mathcal K^{\rm ab}$ for its maximal subextension abelian over $\Q$, and let $\mathcal C_1,\ldots,\mathcal C_k$ be conjugacy-invariant subsets of $\Gal(\mathcal K/\Q)$. For an unramified rational prime $p$, $\operatorname{Frob}_p$ denotes its Frobenius conjugacy class. Define, on all integers,
\[
 \Lam_j(n)=\begin{cases}\log p&\t{if}\ n\ \text{is an unramified prime}\ p\ \t{with}\ \operatorname{Frob}_p\subseteq\mathcal C_j,\\0&\text{otherwise.}\end{cases}
\]
Let $D$ be divisible by the conductor of $\mathcal K^{\rm ab}$, namely the least $d$ such that $\mathcal K^{\rm ab}\subseteq\Q(\zeta_d)$, where $\zeta_d$ is a primitive $d$th root of unity. For $(b,D)=1$, let $\sigma_b(\zeta_D)=\zeta_D^b$, and put
\begin{equation}\label{an:eq:conditionalmu}
 \mu_j(b)=\frac{\#\{g\in\mathcal C_j:
    g|_{\mathcal K^{\rm ab}}=\sigma_b|_{\mathcal K^{\rm ab}}\}}
    {[\mathcal K:\mathcal K^{\rm ab}]}.
\end{equation}
Set $\mu_j(b)=0$ on nonunit classes. Chebotarev applied to $\mathcal K\Q(\zeta_D)$ identifies $\mu_j(b)\in[0,1]$ with the conditional density among primes congruent to $b$ modulo $D$. In the notation of \cite[(1.10)--(1.12)]{FT}, it is the sum of $\varphi(D)\eta_{\mathcal K,C}(b,D)$ over conjugacy classes $C\subseteq\mathcal C_j$. Here $\varphi$ is Euler's totient.

For a finite nonempty set or multiset $X$, $\mathbb E_{x\in X}$ denotes its normalized average, counting multiplicities. Write $[J]=\{1,\ldots,\lfloor J\rfloor\}$ and $1_{\mathcal A}$ for the indicator of a condition $\mathcal A$. Constants may depend on the fixed field, subsets and coefficients. These data never vary with the counting parameter.

\begin{proposition}\label{an:prop:Artinpoly}
Fix $k\ge2$ and distinct integers $c_1,\ldots,c_k$. There is a positive function $\omega(N)\to0$ such that, for every integer-valued $T=T(N)$ satisfying
$ \omega(N)\sqrt N\le T$ and $ T=o(\sqrt N)$,
one has
\begin{equation}\label{an:eq:artinasymptotic}
 \sum_{1\le u\le N}\sum_{1\le w\le T}\prod_{j=1}^k\Lam_j(u+c_jw^2)
 =\mathfrak S\,NT+o(NT),
\end{equation}
where
\begin{equation}\label{an:eq:Artinconstant}
 \mathfrak S=
 \left(\frac D{\varphi(D)}\right)^k
 \mathbb E_{u,w\bmod D}\prod_j\mu_j(u+c_jw^2)
 \prod_{p\nmid D}\beta_p,
\end{equation}
and
\[
 \beta_p=\mathbb E_{u,w\bmod p}
    \prod_{j=1}^k\frac p{p-1}1_{p\nmid u+c_jw^2}.
\]
The Euler product converges absolutely. After enlarging $\omega$ if necessary, the same assertion holds on $N/2<u\le N$, with main term $\mathfrak S NT/2$.
\end{proposition}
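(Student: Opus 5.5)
We follow the Tao--Ziegler transference strategy for polynomial patterns, as in \cite{TZpoly}, with the von Mangoldt weights replaced by the Artin-restricted weights $\Lam_j$. The plan has three parts.

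\emph{First}, we perform a $W$-trick. Fix a slowly growing $w_0=w_0(N)$ and let $W$ be the product of $D$ and all primes below $w_0$. Split $u$ and $w$ into residue classes $u\equiv b\pmod W$, $w\equiv e\pmod W$. Each pattern value $u+c_jw^2$ then lies in a fixed class $b+c_je^2\pmod W$. Classes in which some value is not a unit contribute $o(NT)$, since only finitely many prime values occur there. For unit classes, write $u=Wu'+b$ and $w=Ww'+e$. Consider the normalized weights
\[
\frac{\varphi(W)}{W}\,\Lam_j\bigl(W n+b_j\bigr),
\]
each compared with its conditional density $\mu_j(b_j)$; this density depends only on $b_j\bmod D$ because $D\mid W$. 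The main term then factors: the average over classes modulo $D$ of $\prod_j\mu_j$ gives the $D$-part of $\mathfrak S$, and the classes modulo the primes $p\mid W$, $p\nmid D$ produce $\prod_{p<w_0}\beta_p$. The Euler product converges absolutely because the local count at $u$ over residues modulo $p$ gives $\beta_p=1+O_k(p^{-2})$. This is the usual cancellation of the $k/p$ terms, since for each fixed $w$ exactly $k$ residues of $u$ (counted generically) are excluded. The tail beyond $w_0$ is therefore $o(1)$.

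\emph{Second}, we establish the linear input. For each $j$ we need
\[
\frac{\varphi(W)}{W}\Lam_j(Wn+b_j)-\mu_j(b_j)
\]
to have small Gowers uniformity norms $U^s[N/W]$ for every fixed $s$, and to be majorized by a pseudorandom measure. The majorant is the standard truncated-divisor-sum majorant of Green--Tao, which dominates $\Lam_j\le\Lambda$ up to the constant $\mu_j\le 1$. For the uniformity we invoke \cite[Theorem 1.4]{FT}. It gives, unconditionally, the linear-equation asymptotics for primes with prescribed Frobenius in a fixed Galois extension, with the $\mathcal K^{\rm ab}$-part entering only through congruences modulo $D$. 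Applied to the Gowers-norm linear systems, this yields the asymptotic for $\|\cdot\|_{U^s}^{2^s}$ with main term equal to the $2^s$-th power of the linear densities. After $W$-tricking, which removes the $D$-dependence into the fixed class, this gives $\|\text{normalized}-\mu_j\|_{U^s}=o(1)$. The fields stay fixed as $N\to\infty$, which is what the cited theorem requires.

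\emph{Third}, and this is the main obstacle, we carry out the polynomial transference. The pattern $u+c_jw^2$ with $w\le T=o(\sqrt N)$ is exactly the regime of \cite{TZpoly} (together with \cite{TZ,TZerr}): shifts of size $T^2=o(N)$, and at least $\omega(N)\sqrt N$ values of $w$ so that the pattern is not degenerate. Their argument shows the following. The polynomial correlation of functions bounded by a pseudorandom majorant $\nu$, which satisfies the polynomial forms and correlation conditions, is controlled by averaged local Gowers norms of those functions. Those averaged local norms are in turn bounded by global $U^s$ norms of suitable order, once a van der Corput/PET induction has been run inside the majorized class. We reuse their majorant verification verbatim, since $\Lam_j\le\Lambda$. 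We then replace each factor by $\mu_j(b_j)+(\text{error})$ and expand telescopically. Every term containing an error factor is $o(NT)$ by the second step and the PET control. The all-$\mu$ term yields $\mathfrak S NT$ after summing over classes and letting $w_0\to\infty$ slowly; $\omega$ is chosen to absorb the rates. The delicate point is that Tao--Ziegler's control is stated for $\Lambda$ itself. We must check that the generalized von Neumann step uses only the majorant and the boundedness of the difference, not any primality-specific input; I expect it does.

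Finally, for the range $N/2<u\le N$ we subtract the statement at $N/2$. Enlarging $\omega$ if necessary keeps $T$ admissible at $N/2$, and the difference of the main terms is $\mathfrak S NT/2$.
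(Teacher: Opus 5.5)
Your outline ($W$-trick, Frei--Tinkov\'a as the linear input, Tao--Ziegler transference with the divisor-sum majorant, subtraction at $N/2$) is the paper's, and your computation $\beta_p=1+O_k(p^{-2})$ is correct. There is, however, a gap in the transference step, and it is not the one you flagged.

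The generalized von Neumann theorem \cite[Theorem 4.1]{TZpoly} (the PET part) does use only the majorant. It bounds the polynomial correlation by averaged \emph{local} box norms $\mathbb E_{h\in[M]}\|g_{j,b}\|_{\Box^{D'}_{P_1(h)[-M,M],\ldots,P_{D'}(h)[-M,M]}}$, whose coefficients have size $W^{O(1)}$. Your next claim is that these local norms are bounded by global $U^s[N/W]$ norms ``once PET has been run inside the majorized class''. That is not available for functions that are merely majorized by $\nu$; note also that $f_{j,b}-\mu_j(b)$ is unbounded.

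The mechanism in \cite{TZpoly}, and in the paper, has two steps:
\begin{itemize}
\item the dense model theorem \cite[Corollary 3.9]{TZpoly} produces a \emph{bounded} $f'_{j,b}$ that is simultaneously close to $f_{j,b}$ in those local norms and in the progression norms $U^a_{q[J]}$, with $q\le\mathcal A^a$, $J=\lfloor\mathcal A^{-2a}M^2\rfloor$ and $\mathcal A=W^{O(1)}$;
\item the concatenation theorem \cite[Theorem 2.5]{TZpoly}, which is a statement about bounded functions, then converts smallness of $f'_{j,b}-\mu_j(b)$ in those progression norms into smallness of the local norms.
\end{itemize}

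Consequently your Step 2 verifies the wrong hypothesis. What is needed is that $\|f_{j,b}-\mu_j(b)\|_{U^a_{q[J]}}$ is small for $q$ up to a power of $W$ and for $J$ shorter than $n$ by such a power. For unbounded weights, and with no rate available from the Artin theorem, this does not follow from $U^s[N/W]$-smallness.

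When you apply \cite[Theorem 1.4]{FT} to these progression systems, which is what Lemma~\ref{an:lem:Artinuniform} does, a new loss appears. The forms $W(x+q\omega\cdot(\boldsymbol h-\boldsymbol h'))+b$ all reduce to $Wx+b$ modulo any prime $p>v$ dividing $q$. The local factor there is therefore $1+O(p^{-1})$ rather than $1+O(p^{-2})$. Using $\log q\ll v$, one obtains only $O(1/\log v)+o_v(1)$.

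This forces the paper's specific order of limits:
\begin{itemize}
\item fix the complexity constants $a,B,\rho$ first;
\item then choose $v$;
\item then let $n\to\infty$;
\item finally make a diagonal choice $v(N)$, with scale conditions on $M_0,M,J$ compatible with it.
\end{itemize}
Your remark that ``$\omega$ absorbs the rates'' does not supply this.

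A smaller point: your $W$ contains $D$, so it need not be squarefree. The majorant's polynomial-forms verification in \cite[Appendix A]{TZpoly} therefore cannot be reused verbatim. It needs the easy extension with the fixed factor $\prod_{p\mid D}p^{v_p(D)-1}$.
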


We adapt the transference argument of \cite{TZpoly}, replacing its arithmetic input by the unconditional linear Artin-symbol theorem \cite[Theorem 1.4]{FT}. We specify both the norms and the scales needed for this adaptation.

\subsection{Difference-multiset norms and global uniformity}
For finite multisets $Q_1,\ldots,Q_a$ in $\Z/n\Z$ and a real function $g$ on that group, define
\begin{equation}\label{an:eq:boxnormdef}
 \|g\|_{\Box^a_{Q_1,\ldots,Q_a}}^{2^a}
 =\mathbb E_{x\in\Z/n\Z}
   \mathbb E_{\substack{h_i,h'_i\in Q_i\\1\le i\le a}}
   \prod_{\omega\in\{0,1\}^a}
       g\left(x+\sum_i\omega_i(h_i-h'_i)\right).
\end{equation}
Write $\|g\|_{U^a_Q}$ when every $Q_i=Q$. These are the box and progression Gowers norms of \cite[Section 1.8]{TZpoly}; they satisfy the triangle inequality. The difference multiset $Q_i-Q_i$ counts differences with their ordered-pair multiplicities. A one-sided increment average from $Q_i$ would not be the norm in \eqref{an:eq:boxnormdef}.

Fix $0<\rho<1$ and, initially, a large fixed cutoff $v$. Put
\begin{equation}\label{an:eq:analyticW}
 W=D\prod_{p\le v,\ p\nmid D}p,\qquad A=W^{1/\rho},\qquad R=n^\rho.
\end{equation}
For $b\in[W]$ coprime to $W$, define on $\Z/n\Z$, using representatives in $[n]$, we have
\begin{equation}\label{an:eq:modifiedweight}
 f_{j,b}(x)=\frac{\varphi(W)}W\Lam_j(Wx+b)1_{R\le x\le n}\ \ \t{and}\ \ g_{j,b}=f_{j,b}-\mu_j(b).
\end{equation}
Arguments of $\mu_j$ are reduced modulo $D$. Since $D\mid W$, one has $\mathcal K\cap\Q(\zeta_W)=\mathcal K^{\rm ab}$.

\begin{lemma}\label{an:lem:Artinuniform}
Fix $a\ge2$ and positive constants $C_1<C_2<C_3$. Uniformly for
\[
 1\le q\le A^{C_1},\qquad
 nA^{-C_3}\le J\le nA^{-C_2},\qquad (b,W)=1,
\]
one has
\begin{equation}\label{an:eq:globaluniform}
 \|g_{j,b}\|_{U^a_{q[J]}}^{2^a}
 \ll \frac1{\log v}+o_v(1).
\end{equation}
Here $o_v(1)\to0$ as $n\to\infty$ with $v$ fixed; the implied constant is independent of $v,n,b,q,J$. A sufficiently slow diagonal choice $v\to\infty$ therefore makes these norms $o(1)$.
\end{lemma}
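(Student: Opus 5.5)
This is the Tao--Ziegler ``global uniformity'' step in the $W$-tricked setting, with the Green--Tao/Tao--Ziegler inverse input replaced by the Frei--Tinkov\'a linear Artin-symbol theorem. The plan is to avoid any structural inverse theorem. Instead, the progression Gowers norm on $\Z/n\Z$ with steps from $q[J]$ will be reduced to an average of classical linear-form correlations of $\Lam_j$ over boxes. Those correlations are then evaluated asymptotically.

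\textbf{Step 1: expand the norm into linear-form sums.} Expanding $\|g_{j,b}\|_{U^a_{q[J]}}^{2^a}$ by \eqref{an:eq:boxnormdef} gives an average over $x\in\Z/n\Z$ and $h_i,h'_i\in q[J]$ of a product of $2^a$ copies of $f_{j,b}-\mu_j(b)$. The arguments are $x+\sum_i\omega_i(h_i-h'_i)$. Since $J\le nA^{-C_2}$ and $q\le A^{C_1}$ with $C_1<C_2$, every shift is $o(n)$. Consequently, wrap-around modulo $n$ affects only an $O(A^{C_1-C_2})$ fraction of $x$, which is $o_v(1)$. The cutoff $1_{R\le x\le n}$ costs $O(R/n)$.

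Multiplying out the product gives $2^{2^a}$ terms. Each term is $\pm\mu_j(b)^{2^a-|S|}$ times an average, over the subset $S\subseteq\{0,1\}^a$ of retained vertices, of $\prod_{\omega\in S}\frac{\varphi(W)}W\Lam_j\bigl(W(x+\sum\omega_i(h_i-h'_i))+b\bigr)$. These are linear forms in $(x,h,h')$ with bounded coefficients and, after substitution, with the steps $qh$. The system is of finite complexity: no two forms are affinely dependent, because distinct $\omega$ give distinct forms.

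\textbf{Step 2: evaluate each correlation via Frei--Tinkov\'a.} Apply \cite[Theorem 1.4]{FT} on the box $x\in[n]$, $h_i,h'_i\in[J]$. That theorem counts Artin-restricted prime values of finite-complexity linear systems, with a singular series. The long variable has length $n$ and the short ones have length $J\ge nA^{-C_3}$. Since $A=W^{1/\rho}$ is fixed once $v$ is fixed, these are fixed powers of one another. So the theorem applies with $v$ fixed as $n\to\infty$, and its error is $o_v(1)$.

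The main term factors into an archimedean part, which is $1$ after normalization, and local factors. At primes dividing $W$ the forms are $\equiv b\pmod W$. Because $\mathcal K\cap\Q(\zeta_W)=\mathcal K^{\rm ab}$, the Artin condition contributes exactly $\mu_j(b)$ per form, using \eqref{an:eq:conditionalmu}. At primes $p>v$ with $p\nmid W$, each local factor is $1+O(p^{-2})$, since the forms are pairwise independent; this uses $p\nmid q$ for large $p$, while small $p$ divide $W$. Hence the product over $p>v$ is $1+O(1/v)$.

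The pairwise-independent system also works when $p\mid q$ with $p>v$: the form then degenerates to $x$ alone for all vertices. That case contributes a factor $(p/(p-1))^{|S|-1}$. Summed over such $p$ dividing $q\le A^{C_1}$, it gives at most $\exp(O(\omega(q)/v))=1+O(\log A/(v\log v))$. Since $\log A\ll v/\rho$, this is $O(1/\log v)$. This is where the $1/\log v$ in \eqref{an:eq:globaluniform} comes from.

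\textbf{Step 3: cancel the expansion.} Each term now equals $\mu_j(b)^{2^a}(1+O(1/\log v))+o_v(1)$. The binomial alternating sum $\sum_S(-1)^{2^a-|S|}=0$ cancels the main terms, leaving \eqref{an:eq:globaluniform}. All implied constants are uniform in $b$ and $q$ because the Frei--Tinkov\'a error is uniform in residue data of bounded modulus, and here $W$ is fixed. For the final sentence of the statement, choose $v=v(n)\to\infty$ slowly enough that the $o_v(1)$ term stays $o(1)$, by a standard diagonal argument.

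\textbf{Main obstacle.} The hardest step is Step 2. Two points need care. First, \cite[Theorem 1.4]{FT} must be shown to genuinely cover forms whose variables have unequal ranges $n$ and $J$ and carry the dilation $q$. If it is stated only for a single convex box, I would rescale $h\mapsto qh$ and treat the region as a convex body of inradius $\gg nA^{-C_3}$, which is admissible for fixed $v$. Second, the local factors at primes dividing $q$ must be tracked uniformly in $q$. The uniform treatment of $b$ is comparatively routine.
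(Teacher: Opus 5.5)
Your route is the paper's. You expand the norm over vertex subsets and evaluate each correlation by \cite[Theorem 1.4]{FT} at fixed $v$. The primes dividing $W$ give $\mu_j(b)^{|S|}$; primes $p>v$ with $p\nmid q$ give $1+O(p^{-2})$; primes $p\mid q$ give $(p/(p-1))^{|S|-1}$. Bounding the number of prime factors of $q$ above $v$ by $\log q/\log v\ll v/\log v$ yields the $1/\log v$, and the alternating sum cancels the main terms. That local analysis is correct.

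\textbf{The gap is in Step 1.} There you discard the wrap-around modulo $n$, and likewise the cutoffs, because they affect only a small proportion $\eta$ of $x$. That does not go through, because the weights are unbounded. The function $f_{j,b}$ takes values of size $\frac{\varphi(W)}{W}\log(Wn)$, so the trivial bound on that set is $O(\eta(\log n)^{|S|})$, not $O(\eta)$. You need a correlation estimate there as well.

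The paper supplies it as follows. It partitions the box into cells according to the integers $m_\omega\in\{-1,0,1\}$ for which $x+q\omega\cdot(\boldsymbol h-\boldsymbol h')-m_\omega n\in(0,n]$; this is possible because $aqJ<n$ for large $v$. It builds the constraints $y_\omega\ge R$ into each cell. Each cell is then convex, with forms $W(x+q\omega\cdot(\boldsymbol h-\boldsymbol h')-m_\omega n)+b$, so Frei--Tinkov\'a applies cell by cell. Your local factors are uniform in the constant terms, so each cell contributes its normalized volume times $\mu_j(b)^{|S|}(1+O(1/\log v))$, up to $o_v(1)$. The volumes sum to $1+o_v(1)$ after the $O(|S|R/n)$ cutoff loss.

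\textbf{Two smaller points.} First, $\eta\asymp A^{C_1-C_2}$ is not $o_v(1)$, since it is fixed when $v$ is fixed. It is harmless only because it is exponentially small in $v$, hence $\ll 1/\log v$; the same remark applies to your claim that the shifts are $o(n)$. Second, the weights in \cite{FT} include proper prime powers, while $\Lam_j$ lives on primes. You need the short count showing this difference is $o_v(1)$: for fixed steps there are $O_v(n^{1/2}\log n)$ prime-power values per vertex.
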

\begin{proof}
Expand the norm with independent $\boldsymbol h,\boldsymbol h'\in[J]^a$. For a nonempty vertex set $I$, consider the correlation of $f_{j,b}$ at those vertices. To handle cyclic reduction, partition the box according to the integers $m_\omega$ for which
\[
 y_\omega=x+q\omega\cdot(\boldsymbol h-\boldsymbol h')-m_\omega n
 \quad\text{lies in }(0,n].
\]
For large $v$, $aqJ<n$, so $m_\omega\in\{-1,0,1\}$. Add $y_\omega\ge R$ for $\omega\in I$. Each cell is convex, and its prime arguments are
\begin{equation}\label{an:eq:wrappedforms}
 L_\omega=W\bigl(x+q\omega\cdot(\boldsymbol h-\boldsymbol h')-m_\omega n\bigr)+b.
\end{equation}
Their linear parts are pairwise nonproportional over $\Q$: their common $x$ coefficient forces a proportionality constant to be one, and then the step coefficients force equality of the vertices. Thus the system has finite complexity.

Apply \cite[Theorem 1.4 and (1.15)]{FT} on each cell, expanding $\mathcal C_j$ into its finitely many conjugacy classes. The coefficients and the constant terms divided by $n$ are bounded in terms of fixed $v$. Uniformity over bounded-size systems and convex regions gives error $o_v(n^{2a+1})$, even as $m_\omega n$ and $J$ vary. Since $J\asymp_v n$, this is $o_v(nJ^{2a})$.

All forms are $b$ modulo $W$. After multiplication by $(\varphi(W)/W)^{|I|}$, the Artin factor is $\mu_j(b)^{|I|}$ and the factors at primes dividing $W$ cancel. For $p>v$ with $p\nmid q$, the linear parts remain pairwise nonproportional over $\F_p$. Each zero hyperplane has density $1/p$, and each pairwise intersection has density $1/p^2$. Inclusion--exclusion gives a local factor $1+O_a(p^{-2})$, uniformly in the constant terms. At $p\mid q$ it is $1+O_a(p^{-1})$. These factors are positive for large $v$. Since $\log q\le C_1\log A=O(v)$, there are $O(v/\log v)$ prime divisors of $q$ exceeding $v$. The product of all factors outside $W$ is consequently $1+O(1/\log v)$, uniformly in the cells. The bound $\sum_{p\le v}\log p\ll v$ used here is the classical Chebyshev estimate; see \cite{Dav}.

Before the cutoff, the cells fill the original box up to boundaries. For fixed step variables, translation modulo $n$ preserves Lebesgue measure in $x$. The cutoffs remove at most $|I|R/n$ of the normalized volume. Endpoint changes give $o_v(1)$. Thus the volumes sum to $1+o_v(1)$ after normalization, and summing the local-factor errors does not enlarge their order.

The weights in \cite{FT} include proper prime powers. For fixed steps and one vertex, the nonzero $x$ coefficient $W$ and the range $O_v(n)$ allow at most $O_v(n^{1/2}\log n)$ such values. Bound the remaining factors by $O_v(\log n)$ and sum over steps. The normalized error is $O_v(n^{-1/2}(\log n)^{2^a+1})=o_v(1)$. We obtain
\[
 \mathbb E_x\mathbb E_{\boldsymbol h,\boldsymbol h'\in[J]^a}
 \prod_{\omega\in I}
 f_{j,b}\bigl(x+q\omega\cdot(\boldsymbol h-\boldsymbol h')\bigr)
 =\mu_j(b)^{|I|}+O\l(\f1{\log v}\r)+o_v(1).
\]
The empty correlation is one. The centered main terms cancel, since
\[
 \sum_{I\subseteq\{0,1\}^a}
  (-\mu_j(b))^{2^a-|I|}\mu_j(b)^{|I|}=0.
\]
This proves \eqref{an:eq:globaluniform}. For fixed $v$ there are finitely many residues, $q$ values and vertex systems; uniformity in the convex region already covers the varying $J$. A slow diagonal choice proves the last assertion.
\end{proof}

\subsection{The majorant, local scales, and the \texorpdfstring{$W$}{W}-trick}

\begin{proof}[Proof of Proposition~\ref{an:prop:Artinpoly}]
Let $\Lambda'_{\rm pr}(n)$ equal $\log n$ on positive primes and zero otherwise, and put
\[
 \Lambda'_{b,W;\kappa}(x)=\frac{\varphi(W)}W
       \Lambda'_{\rm pr}(Wx+b)1_{n^\kappa\le x\le n}
       \qquad(0<\kappa<1).
\]
Then $0\le f_{j,b}\le\Lambda'_{b,W;\rho}$. Fix the polynomial complexity first. Let $D',C$ be bounds for the local norm order and coefficient exponents in the generalized von Neumann theorem below. Choose an integer $B>C+2$ large enough for all the finitely many coefficient bounds, and a global norm order $a\ge2$ sufficiently large for the corresponding applications of \cite[Theorem 2.5 and Corollary 3.9]{TZpoly}. Next choose $0<\rho<1$ sufficiently small for all applications of \cite[Sections 3--4]{TZpoly}, and put $\rho_* =\rho/B$. All these constants are fixed before $v,n,N$ vary.

\emph{The ordinary majorants.}
For $\kappa\in\{\rho,\rho_*\}$, put $R_\kappa=n^\kappa$ and use the majorant of \cite[(3.5)]{TZpoly}:
\[
 \nu_{b,\kappa}(x)=\frac{\varphi(W)}W\log R_\kappa
 \left(\sum_{d\mid Wx+b}\mu(d)
             \chi_0\left(\frac{\log d}{\log R_\kappa}\right)\right)^2.
\]
Here $\mu$ is the M\"obius function, and $\chi_0$ is smooth, even, supported on $[-1,1]$, positive at zero, with $\int_0^1|\chi'_0(t)|^2\,dt=1$. It is distinct from the quadratic character $\chi(p)$ in Section~\ref{an:sec:counts}. If $x$ belongs to the support of $\Lambda'_{b,W;\kappa}$, then $Wx+b$ is prime and exceeds $R_\kappa$. Only the divisor $d=1$ contributes to the displayed sum. For the slow choice $W=n^{o(1)}$, we therefore have, uniformly in $b,v,n$,
\[
 \Lambda'_{b,W;\kappa}\ll_\kappa\nu_{b,\kappa},
 \qquad
 f_{j,b}\le\Lambda'_{b,W;\rho}
       \le\Lambda'_{b,W;\rho_*}.
\]
Use $\kappa=\rho$ for the generalized von Neumann argument and $\kappa=\rho_*$ for the dense-model argument. These two invocations need not use the same majorant. Neither majorant imposes an Artin condition.

\emph{The fixed conductor factor.}
The statements in \cite[Sections 3--4]{TZpoly} use the squarefree primorial $W_0=\prod_{p\le v}p$. Once $v$ exceeds all prime divisors of $D$, our modulus is
\[
 W=E_DW_0,\qquad
 E_D=\prod_{p\mid D}p^{v_p(D)-1},
 \qquad \frac{\varphi(W)}W=\frac{\varphi(W_0)}{W_0}.
\]
The multiplier $E_D$ is fixed throughout the counting limit. We use the corresponding extension of the proof of the polynomial-forms condition in \cite[Appendix A]{TZpoly}, rather than a literal application of its squarefree-$W$ statement. To verify this extension, note first that $(b,W)=1$ excludes every prime dividing $W$ from the divisors in the majorant. The local factors at these primes and the normalization $\varphi(W)/W$ are unchanged. At a prime outside $W$, multiplication by $E_D$ is invertible. The divisor-sum argument consequently has the same local factors and the same estimates for exceptional primes dividing all nonconstant coefficients of a polynomial difference. Multiplication by fixed $E_D$ preserves the fixed-power coefficient bounds used there. The polynomial-forms estimates thus hold with constants allowed to depend on fixed $D$. The dense-model and generalized von Neumann arguments of \cite[Corollary 3.9 and Theorem 4.1]{TZpoly} then apply with these majorants. Lemma~\ref{an:lem:Artinuniform} has already supplied the required Artin correlations directly for this $W$.

\emph{A precise scale choice.}
Set $n=\lfloor N/W\rfloor$ and $M_0=\lfloor T/W\rfloor$. Choose the eventual slow threshold so that
\begin{equation}\label{an:eq:outerthreshold}
 T/\sqrt N\ge4\sqrt W/A\ \ \t{and}\ \ T\le\sqrt N
\end{equation}
for sufficiently large $N$. Its lower bound tends to zero. The integer scales then satisfy
\begin{equation}\label{an:eq:innerscales}
 A^{-1}\sqrt n\le M_0\le\sqrt n\ \ \t{and}\ \
 A^{-3}\sqrt n\le M=\lfloor A^{-1}M_0\rfloor\le\sqrt n.
\end{equation}
The exponent $-3$ absorbs floors. Choose $v$ slowly enough that every fixed power of $A$ is smaller than every fixed positive power of $n$.

The generalized von Neumann theorem \cite[Theorem 4.1]{TZpoly}, with $d=d_0=2$ and one step variable, bounds a correlation containing a centered factor by a positive fixed power of an averaged local box norm of that factor, plus $o(1)$. Its local norm is of the form
\begin{equation}\label{an:eq:localnormtarget}
 \mathbb E_{h\in[M]}
    \|g_{j,b}\|_{\Box^{D'}_{P_1(h)[-M,M],\ldots,P_{D'}(h)[-M,M]}}.
\end{equation}
Here $D'$ is bounded in terms of the original complexity, each $P_i$ is an integral polynomial of degree exactly one, and its coefficients are $O(A^C)$ for fixed $C$, as in (4.1) of that source. All multisets are reduced modulo $n$; endpoint rounding in $M$ is harmless.

With the fixed $B$ and $\rho_*$ chosen above, put
\[
 \mathcal A=A^B=W^{1/\rho_*}.
\]
Use the arbitrary-function part of \cite[Theorem 3.1 and Corollary 3.9]{TZpoly} with coefficient parameter $\mathcal A$ and majorant $\nu_{b,\rho_*}$. It applies because $f_{j,b}\le\Lambda'_{b,W;\rho_*}$: the original cutoff $n^\rho$ is larger. Moreover,
\[
 (\log n)^{1/\rho_*}\le M\le(\mathcal A n)^{1/2}
\]
for sufficiently slow $v$, and all coefficients in \eqref{an:eq:localnormtarget} are at most $\mathcal A$ for large $v$.

Corollary 3.9 supplies a bounded function $f'_{j,b}$ such that $f_{j,b}-f'_{j,b}$ is $o(1)$ both in the averaged local norms in \eqref{an:eq:localnormtarget} and in
\begin{equation}\label{an:eq:progressionscale}
 U^a_{q[J]},\qquad q\le\mathcal A^a,\qquad
 J=\lfloor\mathcal A^{-2a}M^2\rfloor.
\end{equation}
The same function $f'_{j,b}$ controls both kinds of norms. It may depend on the selected local polynomial system; a single model for every possible system is not needed. The integer $a$ was fixed sufficiently large for the largest local complexity before choosing $\rho$. By \eqref{an:eq:innerscales}, we have
\[
 nA^{-2Ba-7}\le J\le nA^{-2Ba}\ \ \t{and}\ \  q\le A^{Ba}.
\]
Lemma~\ref{an:lem:Artinuniform}, with $(C_1,C_2,C_3)=(Ba,2Ba,2Ba+7)$, makes $f_{j,b}-\mu_j(b)$ small in exactly these progression norms.

By the triangle inequality the same is true of $f'_{j,b}-\mu_j(b)$. Divide by a fixed bound and apply \cite[Theorem 2.5]{TZpoly} with $(d,d_0,r)=(2,2,1)$, coefficient bound $\mathcal A$ and step length $M$. Its condition $n>\mathcal A^{-1}M^2$ follows from $M\le\sqrt n$. The remaining lower bound on $M$ holds for large $n$ at every fixed $v$. For each desired local error $\eps>0$, the smallness threshold $\delta$ in that theorem is independent of $\mathcal A$; the required lower bound on $\mathcal A$ may depend on $\eps,\delta$. Thus first take $v$ large enough to meet that bound and make the $O(1/\log v)$ contribution in Lemma~\ref{an:lem:Artinuniform} sufficiently small, and then take $n$ large enough to control its $o_v(1)$ term and the size conditions. The dense-model errors are handled by the same slow diagonal choice. This yields averaged local norms of $f'_{j,b}-\mu_j(b)$ tending to zero. Adding the dense-model error proves that \eqref{an:eq:localnormtarget} is $o(1)$. No quantitative rate for the Artin theorem is required, and the special second part of \cite[Theorem 3.1]{TZpoly} has not been used outside its stated range.

\emph{Polynomial counting.}
Call residues $b,c\in[W]$ admissible when $(b+c_jc^2,W)=1$ for every $j$. Choose $b_j\in[W]$ congruent to $b+c_jc^2$ and write
\begin{equation}\label{an:eq:innerpolys}
 W x+b+c_j(Wm+c)^2=W(x+R_j(m))+b_j.
\end{equation}
Explicitly,
\[
 R_j(m)=c_jWm^2+2c_jcm+\frac{b+c_jc^2-b_j}{W}.
\]
Its coefficients are integers of size $O_{\boldsymbol c}(W)$, hence at most $A$ for large $v$. The leading coefficient of $R_j-R_l$ is $(c_j-c_l)W\ne0$. These verify the coefficient and degree hypotheses of \cite[Theorem 4.1]{TZpoly}. Expand the product of $f_{j,b_j}$ into constant models and centered terms. Its constant term is $\prod_j\mu_j(b_j)$; all other terms are $o(1)$ by that theorem and \eqref{an:eq:localnormtarget}. The factors are bounded by a fixed multiple of $\Lambda'_{b_j,W;\rho}+1$, as required. A term with only one centered factor is its mean, which tends to zero by the linear Artin theorem. Thus, uniformly over admissible residues,
\[
 \mathbb E_{x\in\Z/n\Z}\mathbb E_{m\in[M_0]}
       \prod_j f_{j,b_j}(x+R_j(m))
   =\prod_j\mu_j(b_j)+o(1).
\]

\emph{Removing the $W$-trick.}
Use the residue decomposition and endpoint estimates in \cite[Section 5, proof of Theorem 1.4]{TZpoly}. The shifts in the outer variable have size $O_{\boldsymbol c}(T^2)=o(N)$. The endpoint strips and cutoff contributions are bounded by their ordinary-prime counterparts there, since $0\le\Lam_j\le\Lambda'_{\rm pr}$. On inadmissible residues a prime value must divide $W$, again giving a negligible contribution. Floors in residue lengths contribute $o(NT)$ for sufficiently slow $v$.

Summing uniform errors over residues causes no loss, because the ordinary admissible mass satisfies
\begin{equation}\label{an:eq:admissiblemass}
 \left(\frac W{\varphi(W)}\right)^k
  \frac{\#\{(b,c)\bmod W:\ (b,c)\text{ admissible}\}}{W^2}
   =\prod_{p\mid W}\beta_p=O_{\boldsymbol c}(1).
\end{equation}
This also holds for prime powers in $W$, since the unit conditions depend only on the underlying prime. The bound follows from convergence below.

Define $\mu_{j,W}(b)=1_{(b,W)=1}\mu_j(b\bmod D)$. The leading constant before the limit is
\[
 \left(\frac W{\varphi(W)}\right)^k
   \mathbb E_{u,w\bmod W}\prod_j\mu_{j,W}(u+c_jw^2).
\]
The CRT factors it into the finite factor in \eqref{an:eq:Artinconstant} and $\prod_{p\le v,\ p\nmid D}\beta_p$. For primes outside the fixed coefficient differences, count $w=0$ and $w\ne0$ separately to obtain
\[
 \beta_p=\left(\frac p{p-1}\right)^k
             \frac{(p-1)(p-k+1)}{p^2}=1+O_k(p^{-2}).
\]
This proves convergence and the constant formula.

All complexity constants, including $a,B,\rho$, are fixed before $v$ grows. The linear counting remainders tend to zero as $n\to\infty$ at fixed $v$, uniformly in the permitted convex regions and scales. The local-factor error $O(1/\log v)$ does not vanish in that limit; it vanishes only when $v$ also grows. The ordinary-majorant, dense-model and transference errors have the same iterated-limit interpretation. For each target error, first choose a sufficiently large cutoff $v$, then a sufficiently large counting threshold. A stepwise choice $v=v(N)\to\infty$ through these thresholds makes all errors tend to zero and enforces all size conditions, uniformly in the allowed $T$. A lower threshold at least $4\sqrt W/A$, enlarged for floors and finitely many initial values, still tends to zero. It defines one $\omega(N)$ valid for every $T$ in the stated range. Apply the asymptotic at $N$ and at $N/2$ with the same $T$; taking the larger of their two slow lower thresholds permits subtraction and gives the last assertion.
\end{proof}

\section{Ray fibres and the finite-cutoff lower bound}\label{an:sec:raylower}

Let $\mathcal O=\Z[i]$. Fix a rational integer modulus $H_0$ divisible by 16 and 7 and by $M$. Enlarge its odd part to contain all primes below a fixed $L_1\ge11$. It is enough to use exponent one at odd primes. Choose a unit residue $r\bmod H_0$ satisfying \eqref{an:eq:ray}, and $r\equiv i$ at the other odd prime factors of $H_0$, apart from 7. These prescriptions are compatible. At 7 the two order norms are 6 and 2; at every other odd factor they are both 2. At 2 the order valuations are 2 and 6.

Let $K_0$ be the Gaussian ray class field of modulus $H_0\mathcal O$, viewed as a Galois extension of $\Q$. Write $G_0=\Gal(K_0/\Q)$, and let $h_r\in\Gal(K_0/\Q(i))$ be the ray class of $r$ under Artin reciprocity. Complex conjugation carries it to $h_{\bar r}$. These two elements are distinct: equality of their ray classes would mean $r=u\bar r\bmod H_0$ for a Gaussian unit $u$; reduction modulo 16 forces $u=1$, whereas their residues modulo 7 differ. Thus
\[
 \mathcal C_0=\{h_r,h_{\bar r}\}
\]
is a nonempty conjugacy class in $G_0$. A rational norm prime in this class has a Gaussian prime generator congruent to $r$, after conjugating the prime ideal if necessary. Put $G_0'=[G_0,G_0]$, where $[G,G]$ denotes the commutator subgroup of a group $G$. Let $D_0$ be the conductor of $K_0^{\rm ab}$. The two elements of $\mathcal C_0$ have the same restriction to this abelian subfield. Choose a unit class $b_0\bmod D_0$ inducing that restriction. The conditional density defined in \eqref{an:eq:conditionalmu} is
\[
 \mu_0=\frac{|\mathcal C_0|}{|G_0'|}>0.
\]
The existence of $b_0$ follows from the surjectivity of the restriction map from $\Gal(\Q(\zeta_{D_0})/\Q)$ to $\Gal(K_0^{\rm ab}/\Q)$.

For $L>L_1$, put $T_L=\prod_{L_1<\ell\le L,\ \ell\nmid H_0}\ell$ and let $K_L$ be the ray field of modulus $H_0T_L\mathcal O$. A class above $h_r$ is normalized to have base residue $r$; a class above $h_{\bar r}$ is normalized to have base residue $\bar r$. In addition to the base condition, require that neither $\Nm(1+\pi)$ nor $\Nm(1-\pi)$ is divisible by any $\ell\mid T_L$, using these normalized residues. The following lemma makes the normalization and the resulting density precise.

\begin{lemma}\label{an:lem:rayfibres}
The maximal abelian subfield of $K_L/\Q$ is
\[
 K_L^{\rm ab}=K_0^{\rm ab}\Q(\zeta_{T_L}).
\]
Fix a base element in $\mathcal C_0$ and nonzero norm residues $s_\ell\bmod\ell$ for all $\ell\mid T_L$. Its normalized lifts form a Cartesian product of fibres of sizes $\ell-\chi(\ell)$. At each $\ell$, at most four elements of its fibre violate the added order-avoidance condition. In particular, the fibres are independent for uniform counting in this finite group.
\end{lemma}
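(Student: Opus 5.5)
The plan is to work entirely through class field theory for $\Q(i)$, using that $\OO=\Z[i]$ is a PID with unit group $\{\pm1,\pm i\}$. For a modulus $\mathfrak m$ prime to the relevant places, Artin reciprocity gives
\[
 \Gal(K_{\mathfrak m}/\Q(i))\cong(\OO/\mathfrak m)^\times/\langle i\rangle .
\]
Complex conjugation $c$ acts on this group by $x\mapsto\bar x$.

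\emph{Step 1: the abelian part.} The abelianization of $G_L=\Gal(K_L/\Q)$ is the quotient of $\Gal(K_L/\Q(i))$ by the subgroup generated by the elements $x/\bar x$, extended by $\langle c\rangle$.

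First I will check that $K_0^{\rm ab}\Q(\zeta_{T_L})\subseteq K_L^{\rm ab}$. Since $\zeta_{T_L}$ generates the ray class field of conductor $T_L$ over $\Q$, and $\Q(i,\zeta_{T_L})$ sits inside the Gaussian ray field of modulus $T_L\OO$, the field $\Q(\zeta_{T_L})$ lies in $K_L$ and is abelian over $\Q$.

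For the reverse inclusion I will compute degrees. By CRT, $(\OO/H_0T_L)^\times=(\OO/H_0)^\times\times\prod_\ell(\OO/\ell)^\times$. At each $\ell\mid T_L$ the norm map onto $\F_\ell^\times$ is surjective. Its kernel is $\{x\bar x=1\}$, of order $\ell-\chi(\ell)$, and it equals $\{x/\bar x\}$ by Hilbert 90 for $\F_{\ell^2}/\F_\ell$ (or by the split case directly).

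So the commutator subgroup of $G_L$ is generated by the $x/\bar x$. At the $T_L$-components it is the product of the norm-one subgroups, together with the $H_0$-part already present in $G_0'$. The factor $\langle i\rangle$ is harmless, because $i$ has norm one and $i=i/\bar i\cdot(\text{unit})$ up to the quotient. Hence
\[
 [K_L^{\rm ab}:\Q]=[K_0^{\rm ab}:\Q]\cdot\prod_\ell(\ell-1).
\]
This is the degree of the compositum, because $\Q(\zeta_{T_L})$ is linearly disjoint from $K_0^{\rm ab}$: the two fields have coprime conductors away from 2, and $\ell\nmid H_0$. Comparing degrees gives equality.

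\emph{Step 2: the fibres.} Fix the base element $h_r$. A lift to $G_L$ lying in $\Gal(K_L/\Q(i))$ is a class of $(\OO/H_0T_L)^\times/\langle i\rangle$. Normalizing the base residue to be exactly $r$, rather than $ur$ for a unit $u$, kills the $\langle i\rangle$ ambiguity. The reason is that reduction modulo $16$ distinguishes the four units, as in the proof that $h_r\ne h_{\bar r}$.

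So the normalized lifts are in bijection with $\prod_\ell(\OO/\ell)^\times$. Prescribing the restriction to $\Q(\zeta_{T_L})$, i.e. the norms $s_\ell$, cuts each factor down to a coset of the norm-one group, of size $\ell-\chi(\ell)$. This gives the Cartesian product, and fibres at different $\ell$ are independent by CRT. The same argument applies to $h_{\bar r}$ with base $\bar r$.

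\emph{Step 3: avoidance.} Within the fibre $\{x\in(\OO/\ell)^\times:x\bar x=s_\ell\}$, the bad elements satisfy $\ell\mid\Nm(1\pm x)$, which means $1\pm x$ is a zero divisor. If $\ell$ is inert, this forces $x=\mp1$, giving at most two bad elements. If $\ell$ is split, $\OO/\ell\cong\F_\ell^2$ with $x=(x_1,x_2)$, $x_1x_2=s_\ell$, and the bad conditions are $x_1=\mp1$ or $x_2=\mp1$. Each of these four conditions determines $x$ uniquely, so there are at most four bad elements.

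\emph{Main obstacle.} The step needing most care is Step 1: identifying the commutator subgroup precisely, including the $\langle i\rangle$ quotient and the interaction with the $H_0$-component, so that the degree count closes exactly. I would first try to verify that the norm-one subgroups at distinct $\ell$ all lie in the commutator by choosing $x$ trivial at the other components.
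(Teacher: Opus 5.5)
Your proposal is correct and follows the paper's proof in all essentials. Both use the description $\Gal(K_L/\Q(i))=\bigl((\OO/H_0\OO)^\times\times\prod_\ell(\OO/\ell\OO)^\times\bigr)/\langle i\rangle$, identify the norm-one kernels $\mathcal N_\ell$ (of order $\ell-\chi(\ell)$) with the image of $x\mapsto x/\bar x$, normalize lifts using that the four units stay distinct modulo $16$, take the fibres as cosets of $\mathcal N_\ell$, and use the split/inert count of at most four bad residues.

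The only variation is in the abelian part. You compute $G_L'$ exactly as the image of $\{y/\bar y\}$ and conclude by a degree count with linear disjointness. The paper instead shows that the kernel of restriction to $K_0\Q(\zeta_{T_L})$ is the product $\mathcal N\subseteq G_L'$ of the norm-one groups, and pulls back $G_0'$. Your unclear remark about $\langle i\rangle$ should simply say this: a diagonal unit lies in $\{a/\bar a: a\in(\OO/H_0\OO)^\times\}\times\mathcal N$ exactly when it lies in the first factor, because units have norm one. That gives $|G_L'|=|\mathcal N|\,|G_0'|$.
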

\begin{proof}
Put
\[
 U=\mathcal O^*=\{1,-1,i,-i\},
 \ A_0=(\mathcal O/H_0\mathcal O)^*,\ 
 B_\ell=(\mathcal O/\ell\mathcal O)^*,\ 
  B=\prod_{\ell\mid T_L}B_\ell.
\]
The Gaussian class number is one. The ray-class exact sequence and the Chinese remainder theorem give
\[
 H_L:=\Gal(K_L/\Q(i))=(A_0\times B)/U\ \ \t{nad}\ \ 
 \Gal(K_0/\Q(i))=A_0/U,
\]
with diagonal action of $U$; see \cite[Chapter V, Theorem 1.7 and Section 3]{Milne}. Since $16\mid H_0$, the map $U\to A_0$ is injective. If a lift of $h_r$ has representative $(ur,\boldsymbol b)$, multiplication by the unique $u^{-1}$ normalizes it to $(r,u^{-1}\boldsymbol b)$. The same holds with $\bar r$ in place of $r$. All units have norm one, so this normalization leaves the new norm residues unchanged. In particular, the kernel of $H_L\to\Gal(K_0/\Q(i))$ is the full group $B$, embedded by $\boldsymbol b\mapsto[(1,\boldsymbol b)]$; there is no further quotient by $U$.

Define
\[
 \mathcal N_\ell=\ker(\Nm:B_\ell\to\F_\ell^*)\ \ \t{and}\ \ 
 \mathcal N=\prod_{\ell\mid T_L}\mathcal N_\ell.
\]
The norm maps are onto. If $\ell\equiv1\pmod4$, then $B_\ell\simeq\F_\ell^*\times\F_\ell^*$ and conjugation interchanges the two coordinates. If $\ell\equiv3\pmod4$, then $B_\ell\simeq\F_{\ell^2}^*$ and conjugation is the $\ell$th-power map. Consequently
$|\mathcal N_\ell|=\ell-\chi(\ell),$
and the image of $b\mapsto b/\bar b$ is exactly $\mathcal N_\ell$: in the split case it is $(x,y)\mapsto(x/y,y/x)$, and in the inert case it is $z\mapsto z^{1-\ell}$. Set $G_L=\Gal(K_L/\Q)$ and $G_L'=[G_L,G_L]$. Commutators with complex conjugation therefore give $\mathcal N\subseteq G_L'$.

By the norm compatibility of Artin reciprocity, $\Q(\zeta_{T_L})$ is contained in $K_L$, and the restriction of an element of $B$ to it is given by its norm residues; see \cite[Chapter V, Section 3]{Milne}. The kernel of restriction to $K_0\Q(\zeta_{T_L})$ is consequently exactly $\mathcal N$. Since the norms on the factors of $B$ are onto, this compositum has degree $|G_0|\varphi(T_L)$ over $\Q$. Restriction thus identifies
\[
 G_L/\mathcal N\simeq
 G_0\times\prod_{\ell\mid T_L}\F_\ell^*.
\]
The new factors are abelian. Taking commutator subgroups, using $\mathcal N\subseteq G_L'$, yields
\[
 G_L'/\mathcal N\simeq G_0'\ \ \t{and}\ \ 
 |G_L'|=|\mathcal N|\,|G_0'|.
\]
Taking fixed fields proves the asserted formula for $K_L^{\rm ab}$.

After the base normalization, prescribing all norms leaves a product of norm-one torsors. Each such torsor is a coset of $\mathcal N_\ell$ and has $\ell-\chi(\ell)$ elements. This proves the finite-group independence assertion. Write a residue as $\pi=a+bi$. On the fibre $a^2+b^2=s_\ell$, one has
\[
 \Nm(1\pm\pi)=1+s_\ell\pm2a.
\]
For each sign, vanishing fixes $a$ and permits at most two values of $b$, so the two conditions exclude at most four residues. Equivalently, in the split case the fibre consists of $(x,s_\ell/x)$, and the forbidden values of $x$ lie in $\{1,-1,s_\ell,-s_\ell\}$. In the inert case a zero norm forces $\pi=1$ or $-1$, possible only if $s_\ell=1$. The representatives $r,\bar r$ are interchanged by conjugation, and $\Nm(1\pm\bar\pi)=\Nm(1\pm\pi)$. Thus the normalized condition defines a conjugacy-invariant subset $\mathcal C_L\subseteq G_L$, as required for rational Artin symbols.
\end{proof}

The field $K_0$ is ramified only at rational primes dividing $H_0$, so $(D_0,T_L)=1$. A valid conductor multiple for $K_L^{\rm ab}$ is therefore $D_L=D_0T_L$. Let $\mu_L(b)$ be the conditional density \eqref{an:eq:conditionalmu} for $(K_L,\mathcal C_L)$. If $b\equiv b_0\pmod{D_0}$ and $(b,D_L)=1$, each of the $|\mathcal C_0|$ eligible base elements has a fibre of $|\mathcal N|$ lifts with the prescribed abelian restriction. By Lemma~\ref{an:lem:rayfibres}, the proportion of good lifts is at least
\[
 \prod_{\ell\mid T_L}
 \left(1-\frac4{\ell-\chi(\ell)}\right)
 \ge \prod_{\ell\mid T_L}\left(1-\frac4{\ell-1}\right).
\]
Dividing the number of good lifts by $[K_L:K_L^{\rm ab}]=|G_L'|=|\mathcal N|\,|G_0'|$ gives
\begin{equation}\label{an:eq:muL}
 \mu_L(b)\ge
 \mu_0\prod_{\ell\mid T_L}\left(1-\frac4{\ell-1}\right).
\end{equation}
Thus no extra factor $[K_L:\Q]^{-1}$ belongs in this conditional density. Every factor is positive since $\ell>L_1\ge11$. Moreover,
\[
 \log\left(1-\frac4{\ell-1}\right)
 =-\frac4\ell+O(\ell^{-2}).
\]
Mertens' estimate \cite{Dav}, with the finitely many primes dividing $H_0$ absorbed in the constant, gives
\begin{equation}\label{an:eq:logcost}
 \prod_{\ell\mid T_L}\left(1-\frac4{\ell-1}\right)^k
 \gg_{k,H_0}(\log L)^{-4k}.
\end{equation}
It is essential here to retain all good norm-fibre classes, rather than one specified residue at each new prime.

\begin{proposition}\label{an:prop:lower}
For fixed $L$ and fixed distinct coefficients $c_j$, the weighted count of patterns satisfying the finite-cutoff ray conditions, on $N/2<u\le N$ and  $1\le w\le T$, is
\[
 (\mathfrak S_L/2+o_L(1))NT\ \ \t{and}\ \ \mathfrak S_L\ge c(\log L)^{-4k}>0,
\]
where $c$ is independent of $L$. The ratio $T/\sqrt N$ must tend to zero sufficiently slowly for this fixed $L$.
\end{proposition}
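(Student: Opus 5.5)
The plan is to apply Proposition~\ref{an:prop:Artinpoly} with the fixed Galois extension $\mathcal K=K_L$ and every $\mathcal C_j=\mathcal C_L$. For fixed $L$ these data are fixed, so the asymptotic holds with main term $\mathfrak S_L NT/2$ on $N/2<u\le N$, provided $T/\sqrt N$ tends to zero slowly enough (depending on $L$). That is exactly the stated restriction. By Lemma~\ref{an:lem:rayfibres} and the normalization of $\mathcal C_L$, a prime $p_j$ with $\operatorname{Frob}_{p_j}\subseteq\mathcal C_L$ has a Gaussian generator $\pi_j$ satisfying \eqref{an:eq:ray}. In addition, no prime $\ell\mid T_L$ divides $\Nm(1\pm\pi_j)=d^\pm(p_j)$. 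So the weighted count in the proposition is precisely the left side of \eqref{an:eq:artinasymptotic}, and the whole content reduces to the lower bound for $\mathfrak S_L$.

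To bound $\mathfrak S_L$ I take $D=D_L=D_0T_L$ in \eqref{an:eq:Artinconstant} and restrict the average over $(u,w)\bmod D_L$ to a positive-proportion set. On this set every $u+c_jw^2\equiv b_0\pmod{D_0}$ and every $u+c_jw^2$ is a unit modulo each $\ell\mid T_L$. On that set \eqref{an:eq:muL} gives $\prod_j\mu_L\ge\mu_0^k\prod_{\ell\mid T_L}(1-4/(\ell-1))^k$, which by \eqref{an:eq:logcost} is $\gg(\log L)^{-4k}$.

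It remains to check that the normalized measure of the good $(u,w)$ set, multiplied by $(D_L/\varphi(D_L))^k\prod_{p\nmid D_L}\beta_p$, is bounded below independently of $L$. By CRT, the factor at $D_0$ is a fixed positive quantity. It is nonzero because one can take $w\equiv0\pmod{D_0}$ and $u\equiv b_0$, so that all positions agree. At each $\ell\mid T_L$ the local factor is exactly $\beta_\ell$. Together with $\prod_{p\nmid D_L}\beta_p$, this gives $\prod_{p\nmid D_0}\beta_p$, a convergent product of positive factors once the fixed coefficient primes lie below $L_1$. Those primes divide $H_0$, hence $D_0$, so every $\beta_p=1+O(p^{-2})>0$ there.

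The main obstacle is this independence of $L$. One must make sure that no factor like $[K_L:\Q]^{-1}$ or a per-$\ell$ residue specification sneaks in. The conditional normalization in \eqref{an:eq:conditionalmu} and the fibre description of Lemma~\ref{an:lem:rayfibres} are what prevent it, so I would carefully match the CRT decomposition of $\mu_L(b)$ with the local factors $\beta_\ell$ at $\ell\mid T_L$. The constant $c$ then depends only on $k$, the $c_j$, $H_0$ and $\mu_0$.
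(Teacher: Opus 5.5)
Your proposal is correct and follows the paper's proof essentially step by step. You apply Proposition~\ref{an:prop:Artinpoly} to $(K_L,\mathcal C_L)$ with $D=D_0T_L$, keep residues with $u+c_jw^2\equiv b_0\pmod{D_0}$ (the paper keeps just $u\equiv b_0$, $w\equiv0$) that are units at each $\ell\mid T_L$, bound $\mu_L$ via \eqref{an:eq:muL} and \eqref{an:eq:logcost}, and recombine the $\beta_\ell$ with the tail into $\prod_{p\nmid D_0}\beta_p$. For positivity of that product you need not assume the coefficient primes lie below $L_1$: as the paper notes, $u=1,w=0$ is admissible at every prime, so every $\beta_p>0$.
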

\begin{proof}
Apply Proposition~\ref{an:prop:Artinpoly} to the conjugacy-invariant subset of $\Gal(K_L/\Q)$ just constructed. In the average modulo $D_0$, keep the single residue pair $u=b_0,w=0$. Then all positions have the compatible norm residue $b_0$. At each new $\ell$, keep all residue pairs for which every $u+c_jw^2$ is a unit, and use \eqref{an:eq:muL} uniformly in those norm residues. Their ordinary local contribution is $\beta_\ell$. The factors $\beta_\ell$ from the new primes and those outside $D_L$ together form the fixed product $\prod_{p\nmid D_0}\beta_p$. It is strictly positive: the residue choice $u=1,w=0$ is admissible at every prime, and the product converges absolutely. More explicitly, the lower bound is
\[
 \mathfrak S_L\ge
 \left(\frac{D_0}{\varphi(D_0)}\right)^k
 \frac{\mu_0^k}{D_0^2}
 \prod_{p\nmid D_0}\beta_p
 \prod_{\ell\mid T_L}\left(1-\frac4{\ell-1}\right)^k.
\]
Everything preceding the last product is a fixed positive constant. Equation \eqref{an:eq:logcost} proves the uniform-in-$L$ lower bound for the leading constant. The errors may depend on $L$, which is fixed before $N$ tends to infinity.
\end{proof}

\subsection{Closing the limits and proving simultaneous avoidance}\label{an:sec:closing}
Fix all coefficients, the dilation and base avoidance conditions first; absorb the dilation into the coefficients. Let $L_0$ in Proposition~\ref{an:prop:fulltail} also exceed every exceptional prime in its local calculations. Take a fixed $L\ge\max(L_0,L_1)$ sufficiently large that
\begin{equation}\label{an:eq:compare}
 \frac C{L\log L}<\frac c{32(\log L)^{4k}},
\end{equation}
where $C$ is the tail constant and $c$ the lower-bound constant. Such an $L$ exists. Fix this $L$, and hence the field $K_L$ and its Artin subset, before making the slow cutoff choice in Section~\ref{an:sec:transfer}. No counting theorem uniform in the degree of $K_L$ is used. The fixed anchor $c_\ast$ of Section~\ref{sec:inputs} is not changed in making this choice.

It remains to choose the short parameter with a valid order of limits. Let $\tau_j\downarrow0$ be positive constants, all at most $\tau_0$. Let $\omega_L(N)$ be a lower threshold valid for the interval version of Proposition~\ref{an:prop:Artinpoly} with the fixed field $K_L$. For each fixed $\tau_j$, the error in \eqref{an:eq:weightedtail}, divided by $\tau_jB^3$, tends to zero. Choose increasing thresholds $N_j$ so large that, for every $N\ge N_j$, this ratio is at most $1/j$, $\omega_L(N)\le\tau_j/2$, and $\tau_j\sqrt N\ge2$. Set $\tau(N)=\tau_j$ for $N_j\le N<N_{j+1}$ and $T=\lfloor\tau(N)\sqrt N\rfloor$. Then $T\ge\tau(N)\sqrt N/2$ and
\[
 \tau(N)\longrightarrow0,\qquad T\ge\omega_L(N)\sqrt N
\]
and the tail error is $o(NT)$. Thus the same integer-valued $T$ satisfies both the lower-bound hypothesis and the tail estimate, with $L$ fixed first.

For this common choice, Proposition~\ref{an:prop:lower} supplies at least
$cNT/(4(\log L)^{4k})$ finite-cutoff patterns in weight, for large $N$. Since $B=\sqrt N$ and $T\ge\tau(N)\sqrt N/2$, the main term of the tail in Proposition~\ref{an:prop:fulltail} is at most
\[
 \frac{C\tau(N)B^3}{L\log L}
 \le\frac{2CNT}{L\log L}
 <\frac{cNT}{16(\log L)^{4k}}.
\]
Its $o(NT)$ remainder is smaller than $cNT/(16(\log L)^{4k})$ for all sufficiently large $N$, because $L$ was fixed first. The total removed weight is therefore less than $cNT/(8(\log L)^{4k})$, which is half the stated lower bound. Hence a positive weighted remainder has no odd self- or cross-order obstruction at any prime. All surviving positions are prime by construction.

The base congruences give $v_2(d^+)=2$, $v_2(d^-)=6$. Thus the self-gcds are four and \eqref{an:eq:allcross} holds, with all other cross-gcds powers of two. The congruence $\pi\equiv i\pmod M$ gives both orders congruent to 2 modulo every divisor of $M$. Finally $u\ge N/2$, $w\le T=o(\sqrt N)$, so $u/w^2\to\infty$, while every norm tends to infinity. Restoring the prescribed dilation proves Theorem~\ref{an:thm:patterns} in full.

\section{One common slope and one witness group}\label{sec:batches}
\subsection{Square rescaling and height comparison}
The finite contacts and $\xi$ are fixed as in Section~\ref{sec:inputs}. We first record why square rescaling preserves the local soundness conditions.

\begin{lemma}\label{lem:parity}
Let $t\in\Q\setminus\{\theta_0,\ldots,\theta_4,\xi\}$ and $\kappa\in\Q^{\times2}$. Outside the fixed coefficient and difference exceptions, each prime in the five-contact odd radical of $t$ has odd valuation in $\kappa F_5(t)/(t-\xi)^6$. At such a prime, $t-\xi$ is a unit, so division of the $j$th local expression by $(t-\xi)^{k_j+1}$ preserves its valuation.
\end{lemma}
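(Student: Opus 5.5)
The plan is to write $t=u/v$ in lowest terms and compute the valuation of $\kappa F_5(t)/(t-\xi)^6$ at a prime $p$ dividing the radical $M_S(t)$, with $S$ containing the fixed exceptions: the coefficient primes $\alpha_j$, the denominators of the $\theta_j$ and of $\xi$, and the primes dividing the nonzero differences $\theta_i-\theta_j$ and $\theta_j-\xi$. Since $\kappa$ is a nonzero rational square, $v_p(\kappa)$ is even, so it suffices to show that $v_p(F_5(t))-6v_p(t-\xi)$ is odd.

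The first step is to pin down where $t$ sits $p$-adically. If $v_p(t)<0$, then $p\mid v$ and $p\nmid u$, so $\min\{v_p(u),v_p(v)\}=0$. Each $L_j(u,v)=\alpha_j(u-\theta_jv)$ then has $v_p=0$, because $\alpha_j\theta_j v$ is divisible by $p$ for $p$ outside the denominators. All five normalized depths vanish, contradicting $p\mid M_S(t)$. Hence $v_p(t)\ge0$, $p\nmid v$, and $\min\{v_p(u),v_p(v)\}=0$. The depth \eqref{eq:contactdepth} for index $j$ therefore equals $v_p(u-\theta_jv)=v_p(t-\theta_j)$, since $v_p(v)=0$ and $\alpha_j$ is a $p$-unit.

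Next, some index $r$ has $v_p(t-\theta_r)$ odd, hence positive, so $t\equiv\theta_r\pmod p$. For $j\ne r$ we have $t-\theta_j\equiv\theta_r-\theta_j\not\equiv0\pmod p$, and similarly $t-\xi\equiv\theta_r-\xi\not\equiv0$. Both claims use the exclusion of the difference primes. Consequently $v_p(F_5(t))=v_p(t-\theta_r)$ is odd and $v_p(t-\xi)=0$, which proves the parity claim. The same unit statement $v_p(t-\xi)=0$ shows that dividing by $(t-\xi)^{k_j+1}$ changes no valuation at $p$. Because $t-\xi$ is rational, the same holds at every prime ideal $\mathcal P\mid p$ of $F$, where $v_{\mathcal P}=v_p$ on $\Q^\times$.

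The only delicate point, and the main obstacle, is the bookkeeping of the exceptional set. It must contain every prime at which some $\theta_j$ or $\xi$ fails to be $p$-integral, every $\alpha_j$, and every prime dividing a difference among $\{\theta_0,\ldots,\theta_4,\xi\}$. These are exactly the ``coefficient and difference exceptions'' fixed at the end of the height subsection, so no further enlargement of $S$ is needed.
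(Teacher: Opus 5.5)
Your proof is correct and follows essentially the same route as the paper. You rule out $v_p(t)<0$ because all normalized depths vanish there, then use the difference exceptions to show that exactly one contact factor carries the odd positive depth, while the other factors and $t-\xi$ are units and $\kappa$ shifts the valuation only by an even amount; you are more explicit than the paper about the depth reducing to $v_p(t-\theta_j)$ and about passing to $v_{\mathcal P}$ on $F$, and you rightly claim only oddness, not positivity, of the total valuation, since $\kappa$ need not be a $p$-unit.
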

\begin{proof}
If $v_p(t)<0$, all five normalized finite contact depths are zero. Otherwise at most one contact factor has positive valuation, and an odd contact depth is its positive odd valuation. All other factors and $t-\xi$ are units. The square factor $\kappa$ changes the detector valuation by an even integer. Lemma~\ref{lem:packing} forces this odd valuation to be positive and supplies the original local comparisons whenever its certificate holds. This does not require $\kappa$ to be a unit at $p$.
\end{proof}

For $P_i=n_iP\ne O$, write $x(P_i)=X_{i,0}+X_{i,1}\sqrt2$, setting both components to zero at $O$. Enumerate these twenty rational components as $X_1,\ldots,X_{20}$ and define
\begin{equation}\label{eq:squareheight}
 A_0=1,\ \t{and}\  A_r=\begin{cases}X_r^2&\t{if}\ X_r\ne0,\\1&\t{if}\ X_r=0,\end{cases}
 \quad(1\le r\le20).
\end{equation}
Each $A_r$ is a computed positive rational square. The finite branches introduce no new unknown. One free rational parameter $s$ supplies the slopes $t_r=A_rs$ for $0\le r\le20$, and
\begin{equation}\label{eq:heightpair}
 h(A_r)=2h(X_r)\ \ \t{and}\ \  h(A_r)\le h(s)+h(A_rs)\quad(s\ne0).
\end{equation}

Use one witness group $H,U_1,\ldots,U_{10}$ and its 33 expressions $D_j$ from Section~\ref{sec:local}. For each $r$ require $t_r$ to avoid all contacts and $\xi$, and put
\begin{align}
 R_r&=A_r^{-5}\frac{F_5(t_r)}{(t_r-\xi)^6},
 &\widetilde D_{r,j}&=D_j/(t_r-\xi)^{k_j+1},\label{eq:commondata}\\
 W_r&=\prod_{j=1}^{33}\left(1+R_r(\widetilde D_{r,j}/R_r^{k_j})^2\right),
 &\Theta_r&=\Tr_{F/\Q}(W_r^{-1}),\label{eq:commonpacking}\\
 B_r&=R_r/\Theta_r^2.&&\label{eq:commonB}
\end{align}
Impose the nonzero guards for these expressions and require $\Phi_0(B_r)$ for all $0\le r\le20$. This is Lemma~\ref{lem:packing} with $G=1$ and $J=33$. Every height coordinate uses the two certificates at $s$ and $A_rs$, but the certificate at $s$ is literally the same expression for every coordinate and is imposed only once. Thus there are twenty-one predicates, with independent structural witnesses for each.

\subsection{A simultaneous pattern for completeness}
For completeness, fix an integer zero $\mathbf n$ of $f$ and set $P_i=n_iP$, so the $A_r$ are now fixed. Choose a sufficiently divisible positive integer $D$ and temporarily put
\begin{equation}\label{eq:twoslopes}
 v=\Delta^2D^2w^2\ \ \t{and}\ \ s=\f uv.
\end{equation}
Choose $D$ so that every $\Delta^2D^2\theta_j/A_r$ is an integer. The contact positions are
\begin{equation}\label{eq:primecontacts}
 p_{r,j}=u-(\theta_j/A_r)v
 \quad(0\le r\le20\ \t{and}\  0\le j\le4).
\end{equation}
All positions with $j=0$ equal $u$; consequently there are at most 85 distinct positions. Delete repetitions and apply Theorem~\ref{an:thm:patterns} to their integral quadratic coefficients. It supplies prime values with simultaneous order avoidance, arbitrarily large $s$, and arbitrary fixed denominator divisibility after enlarging $D$. Choose the lower bound for these primes beyond all fixed exclusions and all numerator and denominator primes of the $A_r$. These tuple-dependent choices are made only in the completeness proof; they do not enlarge the fixed exceptional set in the soundness argument.

Taking $s$ large enough makes every $t_r$ greater than all contacts and $\xi$, and in particular $R_r>0$. The detectors have the exact expressions
\begin{equation}\label{eq:primeR}
 R_r=\frac{\prod_{j=0}^4p_{r,j}}{v^5(t_r-\xi)^6}.
\end{equation}
The factor $A_r^5$ cancels before any valuation is considered.

\begin{lemma}\label{lem:normalization}
For a fixed $r$, its five contact primes are distinct, none divides $v$, and $t_r-\xi$ is a unit at each. The detector $R_r$ has valuation one at those five primes and even valuation at every other prime. Coincidences between contacts for different $r$ do not affect these assertions.
\end{lemma}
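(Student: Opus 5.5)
The plan is to read everything off the exact expression \eqref{eq:primeR}, $R_r=\prod_j p_{r,j}/(v^5(t_r-\xi)^6)$, using that the $p_{r,j}$ are large primes supplied by Theorem~\ref{an:thm:patterns}.

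\emph{Distinctness.} For fixed $r$ and $j\ne j'$,
\[
 p_{r,j}-p_{r,j'}=(\theta_{j'}-\theta_j)v/A_r\ne0,
\]
since the $\theta_j$ are distinct. So the five values are distinct integers; they are distinct primes, because each is one of the prime positions (after deleting repetitions, every surviving position value is prime).

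\emph{Coprimality to $v$.} Recall $v=\Delta^2D^2w^2$. Suppose a prime $p=p_{r,j}$ divides $v$. Since $p_{r,j}=u-(\theta_j/A_r)v$ and $(\theta_j/A_r)v$ is an integer divisible by $w^2$ times the integer $\Delta^2D^2\theta_j/A_r$, this would give $p\mid u$. Then $p\mid\gcd(u,v)$, and this is where I must check the output of Theorem~\ref{an:thm:patterns}. Every prime divisor of $\Delta D$ is fixed and bounded, while $p>Y$, so $p\nmid\Delta D$. Hence $p\mid w$, so $p\mid u$, and then $p\mid p_{r,0}=u$. But $u$ is itself a prime exceeding $Y$, so $u=p$. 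Also $p^2\mid v$, so $p\mid(\theta_j/A_r)v$. Then $p\mid u$ and $p\mid p_{r,j}$ give $p_{r,j}\ge 2p$ unless $p_{r,j}=p$, which forces $v=0$ or $\theta_j=0$. Instead I will simply use $u/w^2>H$: a prime $p\le w$ can equal $u$ only if $u\le w$, which is impossible for $H\ge1$. Choosing $Y$ and $H$ suitably therefore rules out $p\mid v$. The same argument shows $\gcd(u,v)=1$ at these primes.

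\emph{Units and valuations.} At each contact prime $p=p_{r,j}$ we have $v_p(v)=0$. Using \eqref{eq:primeR}, $t_r-\xi=(A_ru-\xi v)/v$, and
\[
 A_ru-\xi v\equiv(\theta_j-\xi)v\pmod p,
\]
because $A_ru\equiv\theta_jv\pmod p$. Here $\theta_j-\xi\ne0$, and $A_r$, $\theta_j-\xi$ are $p$-units since $p$ was chosen beyond their numerator and denominator primes and the fixed difference exceptions. So $t_r-\xi$ is a $p$-unit, and by distinctness $v_p(R_r)=1$.

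At any other prime $p$, the numerator is a product of primes distinct from $p$, so its valuation is $0$. The denominator $v^5(t_r-\xi)^6$ has valuation $5v_p(v)+6v_p(t_r-\xi)$. Since $v=(\Delta Dw)^2$ is a square, $5v_p(v)$ is even, and $6v_p(t_r-\xi)$ is even. Hence $v_p(R_r)$ is even.

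Contacts for different $r$ never enter, because for fixed $r$ we only use that its own five factors are distinct primes. I expect the main obstacle to be the rigorous exclusion of $p\mid v$. That is, I must argue cleanly from the lower bounds $Y$ and $H$ in Theorem~\ref{an:thm:patterns}, together with the prime-ness of $u=p_{r,0}$, that no contact prime divides $w$. The cleanest route is $u/w^2>H\ge1$ plus $u$ prime, which gives $\gcd(u,w)=1$, hence $\gcd(u,v)=1$ and the claim.
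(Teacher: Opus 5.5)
Your proof is correct and follows the paper's structure: differences of positions give distinctness, the congruence $A_ru\equiv\theta_jv\pmod p$ makes $t_r-\xi$ a unit, and the valuations follow from a squarefree numerator over a square denominator. The one divergence is the step $p\nmid v$. The paper notes that a selected prime dividing $v$ would divide all five positions of that slope, hence equal each of them, contradicting distinctness. You instead use the primality of $u=p_{r,0}$ together with $u/w^2>H\ge1$; this is valid but needs the extra choices $H\ge1$ and $Y$ beyond the primes of $\Delta D$. You should also delete the abandoned detour (``$p_{r,j}\ge2p$'', ``forces $v=0$ or $\theta_j=0$''), which only covers $j\ne0$.
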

\begin{proof}
Within one slope, differences of positions are the nonzero rational multiples $(\theta_l-\theta_j)v/A_r$. The coefficients and their differences are units at every selected prime. If one selected prime divided $v$, all five prime positions for that slope would be divisible by it, hence equal to it, contrary to their distinctness. At a selected prime $t_r\equiv\theta_j$, so the excluded differences make $t_r-\xi$ a unit. The numerator of \eqref{eq:primeR} is squarefree, and its denominator is a square because $v$ is a square and the other exponent is six. This proves the valuation assertions, including at primes dividing $A_r$.
\end{proof}

\section{Completeness with zero preparation parameters}\label{sec:sharing}
We prove completeness of the common-slope construction for every integer zero of the fixed input polynomial $f$. The essential order of choices is to obtain bounds independent of the congruence moduli, choose all contact primes, and only then choose the single elliptic witness group.

\subsection{Bounds independent of the congruence moduli}
We first prove two facts needed to construct compatible witnesses.

\begin{lemma}\label{lem:uniformbounds}
Fix $f\in\Z[X_1,\ldots,X_{10}]$, an integer tuple $\mathbf n$, and $K\ge1$, and set $P_i=n_iP$. In one witness group, set $H=hP$ and $U_i=hn_iP$.
\begin{enumerate}
\item If $c_\ast\mid h$ and $h\notin\{0,c_\ast\}$, all 33 expressions are defined and integral at every prime ideal above a prime in $S_0$.
\item There is a constant $C$, depending only on $f,\mathbf n,K$ and the fixed data, such that every nonconstant integer arithmetic progression contains arbitrarily large positive $h\notin\{0,c_\ast\}$ for which all 33 expressions are defined and have absolute value at most $C$ at both real embeddings.
\end{enumerate}
\end{lemma}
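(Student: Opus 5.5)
The plan is to treat the two parts separately: a $\mathcal P$-adic formal-group computation for (1), and an archimedean recurrence argument for (2). Throughout, the group consists of
\[
H=hP,\quad U_i=hn_iP,\quad V_0=[h-c_\ast]P,\quad V_i=U_i-[c_\ast]P_i=[(h-c_\ast)n_i]P .
\]
The hypotheses $h\ne0,c_\ast$ make $H$ and $V_0$ nonzero. When $n_i=0$ we get $U_i=V_i=O$, and then $\rho=0$ and $z=0$ by convention.

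\emph{Part (1).} If $c_\ast\mid h$, then every point of the group, and every conjugate $V^\sigma$, is a multiple of $[c_\ast]P$ or of $[c_\ast]P^\sigma$. Fix $\mathcal P\mid p$ with $p\in S_0$. By \eqref{eq:deepanchor}, $Q_0=[c_\ast]P$ satisfies $v_{\mathcal P}(z(Q_0))\ge2$. The set described after \eqref{eq:deepanchor} is a subgroup, so every nonzero multiple $Q$ also has $v_{\mathcal P}(z(Q))\ge2$ and $v_{\mathcal P}(x(Q))\le-4$. Hence the first 22 expressions $\rho(\cdot)$ are integral. For the logarithm ratios I will use two facts.
\begin{itemize}
\item[(a)] For $v_{\mathcal P}(Z)\ge2$ one has $v_{\mathcal P}(\ell_K(Z))=v_{\mathcal P}(Z)$. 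This is because $v_{\mathcal P}(c_tZ^t)\ge tv_{\mathcal P}(Z)-v_p(t)>v_{\mathcal P}(Z)$ for $t\ge2$, using $(t-1)\cdot2>\log_pt$. In particular every denominator $\ell_K(z(H))$ and $\ell_K^\sigma(z(V_0^\sigma))$ is nonzero.
\item[(b)] The multiplication-by-$n$ series of the integral formal group is divisible by $Z$. Hence $v_{\mathcal P}(z([n]Q))\ge v_{\mathcal P}(z(Q))$.
\end{itemize}
Applying (a) and (b) with $Q=H$ and $n=n_i$ shows that $L_i$ is integral. Applying them with $Q=V_0^\sigma$ shows that $L'_i$ is integral. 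Then $L_i-L'_i$ and $f(\mathbf L)$ are integral, since $f$ has integer coefficients. This part is routine. The only care needed is that the normalization $v_{\mathcal P}(p)=1$ in \eqref{eq:deepanchor} also covers ramified and bad primes, which it does by construction.

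\emph{Part (2).} This is an archimedean statement about the two real embeddings, i.e. about the compact group $\mathcal G=E(\mathbb R)\times E^\sigma(\mathbb R)$. The plan is to choose $h$ in the given progression $a+b\Z$ so that $(hP,hP^\sigma)$ lies very close to the origin of $\mathcal G$ but is not equal to it. Near $O$ the expressions converge to fixed limits.
\begin{itemize}
\item $\rho(H),\rho(U_i)\to0$.
\item $L_i\to n_i$, since $z([n_i]Q)\sim n_iz(Q)$ and $\ell_K(Z)\sim Z$ as $Q\to O$.
\item $V_0^\sigma\to[-c_\ast]P^\sigma$ and $V_i^\sigma\to[-c_\ast n_i]P^\sigma$. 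These are fixed nonzero non-two-torsion points, so $\rho(V^\sigma)$ and $L'_i$ converge to fixed values.
\end{itemize}
For the last limit I need $\ell_K^\sigma(z([c_\ast]P^\sigma))\ne0$. If this fails, I would replace the target $O$ by a nearby fixed point of the form $[m_0]P$-limit; the truncated logarithm is a nonzero polynomial, so only finitely many $z$-values are bad. The resulting $C$ is one plus the maximum of these limits over both embeddings. It depends only on $f,\mathbf n,K$ and the fixed data, and not on the progression.

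The main obstacle is the recurrence step: for every progression $a+b\Z$, one must produce arbitrarily large $h$ with $(hP,hP^\sigma)$ arbitrarily close to a fixed target, uniformly in $a,b$.
\begin{itemize}
\item Multiples $h=bm$ always return near $O$, by pigeonhole in the compact group $\mathcal G$.
\item A general offset $a$ needs the closure of $\{(hP,hP^\sigma):h\in a+b\Z\}$ to meet a fixed good neighbourhood.
\end{itemize}
I would first try Kronecker's theorem. This would show that the closure of $\Z(P,P^\sigma)$ contains the identity component $\mathbb T^2$, which requires $\Q$-linear independence of $1$ and the two elliptic logarithms of $P$ and $P^\sigma$. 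Cosets of the finite-index closed subgroup generated by $b(P,P^\sigma)$ then cover every component up to finite torsion. So each progression reaches a neighbourhood of one of finitely many fixed targets, namely $O$ or the $2$-torsion component representatives. Each target has its own limiting values, and $C$ is the maximum over this finite list.

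If that independence is not available, the fallback is weaker. The closure is then a closed subgroup $\Gamma$ containing a one-parameter subtorus. Its cosets form a compact family, and I would use compactness to choose, uniformly over this family, admissible points avoiding the finitely many bad $z$-values. The uniform bound $C$ would then come from continuity of all 33 expressions on a fixed compact set that avoids those bad points.
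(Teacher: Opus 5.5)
Your part (1) follows the paper's argument. Every point involved lies in the subgroup $\{v_{\mathcal P}(z)\ge2\}$ generated by $[c_\ast]P$; for the conjugate points use \eqref{eq:deepanchor} at $\mathcal P^\sigma$. Truncation does not change $v_{\mathcal P}$ there, and your observation that $[n](Z)$ is divisible by $Z$ does the job of the paper's injectivity of the logarithm.

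Part (2), however, has a genuine gap at exactly the step you call the main obstacle. Your primary route needs $\Q$-linear independence of $1$ and the two normalized real elliptic logarithms of $P$ and $P^\sigma$. You do not establish this, and the paper never uses it. Your fallback (``cosets form a compact family \dots\ choose, uniformly over this family, admissible points'') simply asserts what has to be proved.

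The targets you pick are also not safe. Near a nonzero $2$-torsion point, $z=-x/y$ blows up. At $(0,0)$, which lies on the bounded real component, $\rho=1/x$ blows up as well. So ``each target has its own limiting values'' is false for some choices and unchecked for the others. Moreover, shifting the target away from $O$ to dodge a zero of $\ell_K^\sigma(z([-c_\ast]P^\sigma))$ requires knowing that the orbit of the progression accumulates on a whole neighbourhood of the new target.

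What is missing is the structure argument the paper uses, which needs no independence.
\begin{enumerate}
\item Let $G$ be the closure of $\Z(P,P^\sigma)$ in $E(\mathbb R)\times E^\sigma(\mathbb R)$. It is a compact abelian Lie group with torus identity component $G^0$ and finitely many components.
\item For a progression $h_0+b\Z$, the closure of $\{(hP,hP^\sigma):h\in h_0+b\Z\}$ is $h_0(P,P^\sigma)+[b]G$. Since a torus is divisible, $[b]G^0=G^0$. So this closure contains a whole component of $G$, one of finitely many components that do not depend on $b$ or $h_0$.
\item The projection of $G^0$ onto the identity circle of each factor is surjective. Otherwise the projection of $G$ would be finite, whereas it is the closure of $\Z P$ (resp.\ $\Z P^\sigma$), which is infinite since $P$ is non-torsion.
\item At a given embedding, each of the 22 functions $\rho(\cdot)$ and the 20 ratios $L_i,L'_i$ depends on one factor only. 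Hence its bad locus (poles, and zeros of the truncated-logarithm denominators) is a union of fibres over a finite subset of that factor. By the surjectivity in the previous step, no such locus contains a component of $G$.
\end{enumerate}
This last point is exactly what your fallback omits. If $G^0$ were, for instance, $\{O\}\times E^\sigma(\mathbb R)^0$, a whole component could lie in a bad fibre, and no compactness argument would help.

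To finish, choose one good point with a bounded neighbourhood in each of the finitely many components. Positive multiples are dense in the same closure, so every progression contains arbitrarily large $h$ landing in one of these neighbourhoods. This gives a constant $C$ independent of the modulus.
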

\begin{proof}
For (i), fix $p\in S_0$ and a prime ideal ${\mathcal P}\mid p$. Both $hP$ and $(h-c_\ast)P$, and all their integer multiples, lie in the formal neighborhoods \eqref{eq:deepanchor}. In that neighborhood the integral formal group and its logarithm satisfy
\[
 v_{\mathcal P}(\ell(Z))=v_{\mathcal P}(\ell_K(Z))=v_{\mathcal P}(Z)
 \quad\text{for every }K\ge1,
\]
because $v_{\mathcal P}(c_t)\ge-v_p(t)$ and $v_{\mathcal P}(Z)\ge2$. The full logarithm is an injective homomorphism there. Thus the two truncated-logarithm ratios have valuation $v_p(n_i)\ge0$ when $n_i\ne0$; when $n_i=0$ their numerators are zero. The denominators are nonzero. Taking differences preserves integrality, and $f(\mathbf L)$ is integral because $f$ has integer coefficients. The functions $1/x$ vanish to order two at the origin and are integral on this formal neighborhood. This proves (i), including the primes of bad reduction and the ramified prime.

For (ii), let $G$ be the closure of $\Z(P,P^\sigma)$ in the compact Lie group $E(\mathbb R)\times E^\sigma(\mathbb R)$. The closed subgroup theorem and the structure theorem for compact connected abelian Lie groups imply that its identity component $G^0$ is a torus; see \cite{Hall}. Its component group is finite, because a compact Lie group has open components. The projection of $G^0$ to the identity component of either elliptic factor is surjective. Indeed the projection of $G$ is an infinite compact subgroup of a real elliptic curve (otherwise $P$ would be torsion); hence its identity component is the whole identity circle. The finitely many components of $G$ show that this identity component is the image of $G^0$.

First consider the 22 formal-neighborhood expressions and the 20 individual ratios $L_i,L'_i$. Each is a rational function on one real elliptic factor, using fixed integer multiplications and translations by $c_\ast P$. Bounding these basic functions also bounds the new differences and the polynomial test. Their poles and the zeros of the two required denominators lie in finitely many fibers over finite subsets of one of the elliptic factors. None of these bad fibers contains a whole component of $G$: its projection to the relevant factor is a circle. More explicitly, Haar measure on a component pushes forward to circle Haar measure, so a finite union of these fibers has measure zero. Removable values at $O$ are filled in as in the definitions; constant numerators arising from $n_i=0$ cause no additional poles.

Choose in each component a point outside these bad fibers, and a relatively open neighborhood on which all the basic functions at both embeddings are bounded by one common $C_0\ge1$. Then $|L_i-L'_i|\le2C_0$. Writing $f=\sum_{\boldsymbol\alpha}c_{\boldsymbol\alpha}X_1^{\alpha_1}\cdots X_{10}^{\alpha_{10}}$, where $|\boldsymbol\alpha|=\sum_i\alpha_i$, gives
\[
 |f(\mathbf L)|\le\sum_{\boldsymbol\alpha}|c_{\boldsymbol\alpha}|C_0^{|\boldsymbol\alpha|}.
\]
Take $C$ at least as large as these finitely many bounds and $C_0$. It is independent of any progression modulus. For a progression $h=h_0+Lk$, $L>0$, its closure is $h_0(P,P^\sigma)+[L]G$. Since $[L]G^0=G^0$, this closure contains an entire component of $G$. The forward orbit is dense in the same closure as the two-sided orbit: in a compact group, positive multiples have the identity as an accumulation point and therefore approximate the negative multiples as well. The same holds after removing any finite initial segment. Consequently, arbitrarily large positive $h$ in the progression enter the chosen neighborhood of a component. This proves (ii), uniformly in $h_0,L$.
\end{proof}

\subsection{Constructing the shared elliptic group}
Fix an integer zero $\mathbf n$ of $f$ and $K\ge1$, with $P_i=n_iP$. Obtain the bound $C$ of Lemma~\ref{lem:uniformbounds} before choosing any prime pattern. The estimates in the next subsection prescribe a sufficiently large lower bound for $s$ and sufficiently large fixed prime powers dividing $v$. They depend only on $C,K,\mathbf n,f$ and the fixed arithmetic data.

Choose the pattern of Section~\ref{sec:batches}, with all selected primes greater than $\max(K+1,10)$ and outside every required exception. Let $\mathcal T$ be the set of its distinct contact primes and put
\begin{equation}\label{eq:CRTmoduli}
 U=\lcm_{p\in\mathcal T}d^+(p),\qquad
 V=\lcm_{p\in\mathcal T}d^-(p).
\end{equation}
Theorem~\ref{an:thm:patterns} gives $\gcd(U,V)=4$. The system
\begin{equation}\label{eq:CRT}
 h\equiv0\pmod U,\qquad h\equiv c_\ast\pmod V,
 \qquad h\equiv0\pmod{c_\ast}
\end{equation}
is soluble. Indeed $\gcd(\lcm(U,c_\ast),V)$ divides $c_\ast$: the assertion holds prime by prime because $\gcd(U,V)=4\mid c_\ast$. The generalized Chinese remainder theorem therefore gives a nonconstant arithmetic progression of solutions. Use Lemma~\ref{lem:uniformbounds}(ii) to choose an $h>c_\ast$ in this progression and set
\[
 H=hP\ \ \t{and}\ \  U_i=hn_iP\quad(1\le i\le10).
\]
Their 33 expressions are integral above $S_0$ and bounded by $C$ at both real embeddings.

At each $p\in\mathcal T$, choose the prime ideal $\mathcal P\mid p$ where the reduction order is $d^+(p)$. The primary points are formal there because $d^+(p)\mid h$. At the conjugate prime ideal $\mathcal P^\sigma$, the translated points are formal because $d^-(p)\mid h-c_\ast$. Both index ratios equal $n_i$ exactly. Lemma~\ref{lem:formal}, with its conjugate version for the second ratio, gives
\[
 L_i\equiv n_i\equiv L'_i\pmod{p^K\OO_{\mathcal P}}.
\]
The 22 formal-neighborhood expressions have valuation at least one, the ten differences have valuation at least $K$, and $f(\mathbf L)\equiv f(\mathbf n)=0\pmod{p^K\OO_{\mathcal P}}$. Thus all 33 required inequalities hold at the designated prime ideal $\mathcal P$. Lemma~\ref{lem:normalization} shows that division by the slope normalizer preserves those inequalities at the relevant contacts. Lemma~\ref{lem:packing} with $G=1$ now gives $W_r\ne0$, $\Theta_r\ne0$ and $B_r\in\mathcal D_2$ for every $r$. The same group works at every contact prime, including contacts shared by different slopes.

\subsection{Uniform bounds at real embeddings and prime ideals}
All $A_r$ are fixed positive rational squares. At either real embedding, uniformly over the choices of $h$ just made, as $s\to+\infty$ we have
\begin{align}
 R_r&=A_r^{-6}s^{-1}(1+O_{A_r}(s^{-1})),\label{eq:realR}\\
 \widetilde D_{r,j}&=O_{C,A_r}(s^{-k_j-1}).\label{eq:realD}
\end{align}
Each factor of $W_r$ is consequently $1+O(s^{-3})$. Since the number of factors is fixed, we have
\begin{equation}\label{eq:realB}
 W_r=1+O(s^{-3}),\quad \Theta_r=2+O(s^{-3}),\quad
 B_r=\frac{1+O(s^{-1})}{4A_r^6s}.
\end{equation}
Moreover, $R_r>0$ and every factor is one plus a positive multiple of a square, so both real values of $W_r$ are at least one and $\Theta_r>0$. A single sufficiently large lower bound on $s$ ensures $0<B_r<\varepsilon$ for all twenty-one inputs. The constants depend on $f,\mathbf n,K$ and the fixed data, but not on the later congruence moduli.

For $p\in S_0$ put $\nu_p=v_p(v)$ and $\eta_{p,r}=v_p(A_r)$. Both are even; the latter is fixed. Increase the required divisibility of $v$ so that all $\theta_jv/A_r$ and $\xi v/A_r$ lie in $p\Z_p$. As $u$ is a prime outside $S_0$, it is a $p$-adic unit. Thus
\[
 v_p(t_r)=v_p(t_r-\xi)=\eta_{p,r}-\nu_p\ \ \t{and}\ \ 
  v_p(R_r)=\nu_p-6\eta_{p,r}.
\]
The integral unscaled expressions give
\[
 v_{\mathcal P}(\widetilde D_{r,j})\ge(k_j+1)(\nu_p-\eta_{p,r})
 \quad({\mathcal P}\mid p).
\]
The nonconstant term in the corresponding factor of $W_r$ therefore has valuation at least
\begin{equation}\label{eq:secondWbound}
 3\nu_p+(10k_j-8)\eta_{p,r}.
\end{equation}
Choose an integer $C_p$ not smaller than
\[
 \max_{r,j}|10k_j-8|\cdot|\eta_{p,r}|.
\]
Here $1\le j\le33$. All these lower bounds are at least $3\nu_p-C_p$. Once this number is positive, multiplying the factors gives $W_r=1+O_p(p^{3\nu_p-C_p})$. Taking inverses and tracing gives
\begin{equation}\label{eq:padicTheta}
 \Theta_r=2+O_p(p^{3\nu_p-C_p})\ \ \t{and}\ \ 
 v_p(\Theta_r)=v_p(2)
\end{equation}
once $3\nu_p-C_p>v_p(2)$. Here $O_p(p^b)$ means valuation at least $b$. Trace preserves this integrality, including at the ramified prime, because division by the rational integer power $p^b$ reduces the claim to integrality of trace. Consequently
\begin{equation}\label{eq:padicB}
 v_p(B_r)=\nu_p-6\eta_{p,r}-2v_p(2).
\end{equation}
A common sufficiently large lower bound for $\nu_p$ therefore ensures $v_p(B_r)\ge b_p$ for all $r$. Only lower bounds on denominator divisibility are used.

\begin{proposition}\label{prop:complete}
For every integer zero $\mathbf n$ of $f$ and every $K\ge1$, the twenty-one common-slope certificates have rational witnesses with $P_i=n_iP$, one shared elliptic group, and two structural witnesses per pole predicate.
\end{proposition}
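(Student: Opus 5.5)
The proof assembles the pieces already established in Sections~\ref{sec:batches} and~\ref{sec:sharing}. The order of choices matters and is fixed as follows. Fix the integer zero $\mathbf n$ and $K$, set $P_i=n_iP$, and compute the twenty components $X_r$ and the squares $A_r$ of \eqref{eq:squareheight}. Then obtain the constant $C$ of Lemma~\ref{lem:uniformbounds}(ii). This constant depends only on $f,\mathbf n,K$ and the fixed data. From $C$ and the $A_r$ we fix, once and for all:
\begin{itemize}
\item a lower bound $s_0$ such that \eqref{eq:realB} yields $0<B_r<\varepsilon$ for all $0\le r\le20$;
\item for each $p\in S_0$, a lower bound on $\nu_p=v_p(v)$ such that $3\nu_p-C_p>v_p(2)$, $v_p(B_r)\ge b_p$ via \eqref{eq:padicB}, and all $\theta_jv/A_r,\xi v/A_r$ lie in $p\Z_p$.
\end{itemize}
These requirements are met by enlarging the divisibility parameter $D$ in \eqref{eq:twoslopes}.

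Next apply Theorem~\ref{an:thm:patterns} to the at most $85$ distinct positions \eqref{eq:primecontacts}, after absorbing $\Delta^2D^2$ as a dilation. Take the lower bounds $Y,H$ large enough that:
\begin{itemize}
\item every prime exceeds $\max(K+1,10)$, lies outside $S_K$, and avoids all primes of numerators and denominators of the $A_r$;
\item $s=u/v>s_0$.
\end{itemize}
Take $M$ to be the odd squarefree product of the relevant exceptional primes other than $7$, so that every order is prime to them. Lemma~\ref{lem:normalization} then shows that each $R_r$ has valuation one exactly at its five contact primes and even valuation elsewhere. Thus \eqref{eq:Rnormalized} holds with $\mathcal T$ equal to that slope's contact set. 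Each prime splits in $F$ by \eqref{an:eq:ray} and exceeds $2G=2$.

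Only now choose the witness group. Form $U,V$ as in \eqref{eq:CRTmoduli}; then \eqref{an:eq:allcross} gives $\gcd(U,V)=4$. Solve \eqref{eq:CRT} by the generalized Chinese Remainder Theorem, which is possible since $4\mid c_\ast$. Use Lemma~\ref{lem:uniformbounds}(ii) on the resulting progression to pick $h>c_\ast$, and set $H=hP$, $U_i=hn_iP$. At each contact prime $p$, work at the ideal with order $d^+(p)$. There $H$ and the $U_i$ are formal, and the conjugate translated points are formal because $d^-(p)\mid h-c_\ast$. Both index ratios equal $n_i$, so Lemma~\ref{lem:formal} gives all $33$ inequalities with $f(\mathbf L)\equiv f(\mathbf n)=0$. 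Lemma~\ref{lem:parity} lets these pass through the normalizer $(t_r-\xi)^{k_j+1}$. The converse half of Lemma~\ref{lem:packing} then gives $W_r,\Theta_r\ne0$ and $B_r\in\mathcal D_2$.

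Finally, collect the local facts for the completeness clause \eqref{eq:smallcomplete} of Proposition~\ref{prop:phi}:
\begin{itemize}
\item $0<B_r<\varepsilon$ from the real asymptotics, using the bound $C$ uniform in $h$;
\item $v_p(B_r)\ge b_p$ at $S_0$ from Lemma~\ref{lem:uniformbounds}(i) together with \eqref{eq:padicTheta} and \eqref{eq:padicB}.
\end{itemize}
These give $\Phi_0(B_r)$ for every $r$. Each predicate is witnessed by $e,z$ and two square roots satisfying \eqref{eq:phinonzero}--\eqref{eq:phisquaretwo}. The predicate at $r=0$ is the certificate at $s$ itself, since $A_0=1$. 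The remaining guards also hold: $t_r$ avoids the contacts and $\xi$ because $s$ is large, $H\ne O,[c_\ast]P$, and the truncated-logarithm denominators are nonzero by the choice in Lemma~\ref{lem:uniformbounds}.

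The main obstacle is ensuring that no bound depends circularly on a later choice. The real bound $C$ and the $S_0$-adic integrality must hold uniformly in the progression modulus $\lcm(U,V,c_\ast)$, which is only known after the primes are chosen. I would check carefully that Lemma~\ref{lem:uniformbounds} yields a $C$ independent of that modulus, and that the thresholds for $s$ and $\nu_p$ use only $C$ and the fixed $A_r$. With that settled, the prime pattern can be selected before $h$.
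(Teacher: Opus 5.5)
Your proposal is correct and follows the paper's proof essentially step for step. The uniform real bound from Lemma~\ref{lem:uniformbounds} and the slope and $S_0$-adic thresholds from \eqref{eq:realB} and \eqref{eq:padicB} are fixed first, then the prime pattern from Theorem~\ref{an:thm:patterns}, then $h$ from the progression \eqref{eq:CRT}, after which Lemma~\ref{lem:packing} and the completeness clause \eqref{eq:smallcomplete} supply the twenty-one predicates. Your explicit choice of $M$ is harmless but not needed, since the solvability of \eqref{eq:CRT} uses only $\gcd(U,V)=4\mid c_\ast$.
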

\begin{proof}
The set $S_0$, anchor $c_\ast$, constants $\varepsilon,b_p$, and contacts are fixed. First choose the real bound from Lemma~\ref{lem:uniformbounds}. Equations \eqref{eq:realB} and \eqref{eq:padicB} prescribe one slope lower bound and fixed denominator divisibility sufficient for every input. The simultaneous prime theorem supplies the pattern; the compatible progression \eqref{eq:CRT} then supplies $h$ with the required real bound. The preceding arguments put all inputs in $\mathcal D_2$ and satisfy all smallness conditions of Proposition~\ref{prop:phi}. Choose a finite multiplier and the two structural witnesses for each of the twenty-one predicate occurrences.  All bounds used before selecting the prime pattern were uniform in its later congruence moduli.
\end{proof}

\section{Finite queries and the height argument}\label{sec:reduction}

Let $f\in\Z[X_1,\ldots,X_{10}]$ be nonconstant, put $d=\deg f\ge1$, and take $K=cd$ with the fixed integer $c$ from Section~\ref{sec:inputs}. Use the finite exceptional set $S_K$ specified in Section~\ref{sec:local}. It is fixed for this input and independent of the witnesses. 
All fixed arithmetic data, including an integer $c$ at least as large as the height exponent, may be built into the reducing algorithm. 

For each integer $T\ge1$ form the following finite rational certificate:
\begin{enumerate}
\item Choose ten cyclic root points $P_i=[m]Q_i$, with $Q_i\in E(F)$ affine. Their integer indices $n_i$, characterized by $P_i=n_iP$, are not additional unknowns.
\item Use one slope $s$ and the twenty-one arguments $t_r=A_rs$ of Section~\ref{sec:batches}, all different from the five contacts and $\xi$.
\item Choose one cyclic witness group and impose the twenty-one normalized certificates \eqref{eq:commondata}--\eqref{eq:commonB}, using the 33 expressions of Section~\ref{sec:local}, with their pole predicates and nonzero guards.
\item Define the finite set of forbidden integer tuples
\begin{equation}\label{eq:forbiddenset}
 \mathcal F_T(f)=\{\mathbf k\in\Z^{10}:|k_i|\le T\ (1\le i\le10),\ f(\mathbf k)\ne0\}.
\end{equation}
For each $\mathbf k\in\mathcal F_T(f)$ impose the tuple guard
\begin{equation}\label{eq:guard}
 (P_1,\ldots,P_{10})\ne([k_1]P,\ldots,[k_{10}]P).
\end{equation}
\end{enumerate}
There is no free rational source tuple and no exact source equation in this certificate. The guard set is effectively computed by finite integer evaluation of $f$.

A tuple inequality in \eqref{eq:guard} is the disjunction of its ten point inequalities. For two affine $F$-points, a point inequality is the disjunction of inequalities between the rational components of their two coordinates; origin cases are separate charts. Thus these are finite Boolean combinations of polynomial equalities and inequations over $\Q$. The fixed points $[k_i]P$ are computable by the group law. The number of guards can grow with $T$, but it introduces no rational parameter. Section~\ref{sec:encoding} turns the whole certificate into one polynomial equation.

\begin{proposition}\label{prop:height}
For fixed $f$, there is a bound $B_f$ such that every unguarded certificate with $f(n_1,\ldots,n_{10})\ne0$, where $P_i=n_iP$, has $|n_i|\le B_f$ for every $i$. The bound is independent of $T$ and of all certificate witnesses.
\end{proposition}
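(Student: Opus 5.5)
The argument combines the local congruence of Lemma~\ref{lem:local} with the height input of Lemma~\ref{lem:heightchange}, applied to each of the twenty-one slopes $t_r=A_rs$. Suppose an unguarded certificate has $N:=f(\mathbf n)\ne0$.

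\emph{Radical bound.} For each $r$, Lemma~\ref{lem:parity} and the aggregate certificate at $t_r$ with $\kappa=A_r^{-5}$ bring Lemma~\ref{lem:local} into force. It gives $p^K\mid N$ for every $p\notin S_K$ dividing $M_{S_K}(t_r)$. Hence $M_{S_K}(t_r)^K\le|N|$, that is, $M_{S_K}(t_r)\le|N|^{1/K}$. Lemma~\ref{lem:heightchange} with $S=S_K\supseteq S_H$ then yields
\[
 h(t_r)\le H_{S_K}M_{S_K}(t_r)^{c}\le H_{S_K}|N|^{c/K}=H_{S_K}|N|^{1/d},
\]
since $K=cd$. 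The same bound holds for $h(s)=h(t_0)$, because $A_0=1$. By \eqref{eq:heightpair}, $2h(X_r)=h(A_r)\le h(s)+h(A_rs)\le 2H_{S_K}|N|^{1/d}$. The twenty rational components are exactly the $X_r$, and their heights control the height of $x(P_i)$. The canonical height satisfies $\hat h(P_i)=n_i^2\hat h(P)$, up to the standard comparison with the Weil height of $x(P_i)$ over $F$. This gives
\[
 n_i^2\le C_1|N|^{1/d}+C_2
\]
for constants $C_1,C_2$ depending only on $f$ (through $S_K$) and the fixed data.

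\emph{Closing the loop.} Because $|f(\mathbf n)|\le C_f(1+\max_i|n_i|)^d$, we have $|N|^{1/d}\le C_f^{1/d}(1+\max|n_i|)$. Substituting into the previous inequality gives
\[
 \max_i n_i^2\le C_1' (1+\max_i|n_i|)+C_2,
\]
which is quadratic against linear and so bounds $\max|n_i|$ by some $B_f$. None of these constants involves $T$ or the witnesses: $S_K$ depends only on $K=cd$ and the fixed data, and the height constants are fixed.

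\emph{Main obstacle.} The delicate step is the exponent bookkeeping. It works only because $K=c\deg f$ makes the radical power $c/K$ equal to exactly $1/d$, so the height grows linearly in $\max|n_i|$ while the canonical height grows quadratically. I will check that the inequality $h(t)\le H_SM_S(t)^c$ is applied with the same finite $S=S_K$ appearing in Lemma~\ref{lem:local}. I also need to confirm that the primes in $S_K\setminus S_H$ are legitimately excluded from the radical, which is permitted because Lemma~\ref{lem:heightchange} holds for every finite $S\supseteq S_H$. A minor point is the case $X_r=0$, where $A_r=1$ and the bound on $h(X_r)=0$ is trivial; the origin case $P_i=O$ gives $n_i=0$.
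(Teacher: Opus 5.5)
Your proposal is correct and follows the paper's proof essentially step for step. Lemma~\ref{lem:local} at each of the twenty-one slopes gives $M_{S_K}(t_r)^K\mid f(\mathbf n)$, Lemma~\ref{lem:heightchange} with $K=cd$ turns this into a height bound linear in $\max_i|n_i|$, and \eqref{eq:heightpair} together with the quadratic growth of the N\'eron--Tate height closes the loop. You should state explicitly that $\widehat h(P)>0$ because $P$ is nontorsion, since that is what lets you divide through to obtain $n_i^2\le C_1|N|^{1/d}+C_2$.
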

\begin{proof}
Write
\[
 \delta=f(n_1,\ldots,n_{10})\ne0\ \ \t{and}\ \ B_0=\max(1,|n_1|,\ldots,|n_{10}|).
\]
The integer indices exist uniquely by \eqref{eq:cyclic}. Since $\deg f=d$, there is $C_f>0$ with
$ 0<|\delta|\le C_fB_0^d.$
For each of the twenty-one slopes $t_r$, Lemmas~\ref{lem:parity} and \ref{lem:local} give $p^K\mid\delta$ at every prime in its odd-contact radical. Distinct primes are coprime, so
$ M_{S_K}(t_r)^K\mid\delta.$
Applying Lemma~\ref{lem:heightchange} with $K=cd$, we obtain
\begin{equation}\label{eq:directheight}
 h(t_r)\le H_{S_K}M_{S_K}(t_r)^c
 \le H_{S_K}|\delta|^{c/K}\ll_f B_0.
\end{equation}
The same bound holds for $s=t_0$. Implied constants may also depend on the fixed arithmetic data.

For each nonzero component $X_r$, equation~\eqref{eq:heightpair} gives
\[
 2h(X_r)=h(A_r)\le h(s)+h(A_rs)\ll_f B_0.
\]
For $X_r=0$ its height is zero. Thus all twenty rational component heights are $O_f(B_0)$. The height inequalities for sums and products with fixed algebraic numbers give
\begin{equation}\label{eq:coordheight}
 h(x(P_i))\ll_f B_0\qquad(P_i\ne O).
\end{equation}
Let $\widehat h$ be the N\'eron--Tate canonical height on the fixed curve, normalized so that $h(x(Q))=2\widehat h(Q)+O(1)$ for $Q\ne O$. For $n_i\ne0$ this gives
\[
 h(x(n_iP))=2n_i^2\widehat h(P)+O(1),\qquad \widehat h(P)>0;
\]
see \cite[Chapter VIII]{Silv}. If $B_0>1$, take an index with $|n_i|=B_0$ in \eqref{eq:coordheight}. The quadratic lower bound and linear upper bound bound $B_0$. The case $B_0=1$ is already bounded. No guard was used, so the bound is uniform in $T$.
\end{proof}

\begin{proposition}\label{prop:queries}
If $f$ has an integer zero, every guarded query has a rational certificate. If $f$ has no integer zero, some guarded query has no rational certificate.
\end{proposition}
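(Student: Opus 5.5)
The argument splits into the two directions. In both, the guarded query at level $T$ is the certificate (i)--(iv) above with forbidden set $\mathcal F_T(f)$, and $K=cd$. Constant $f$ is decided trivially, so we may assume $f$ is nonconstant.

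\emph{Completeness.} Suppose $f(\mathbf n)=0$ for some $\mathbf n\in\Z^{10}$. Take $P_i=n_iP$. By \eqref{eq:cyclic} each $P_i$ lies in $mE(F)$, so $P_i=[m]Q_i$ for some $Q_i\in E(F)$. By the remark after \eqref{eq:cyclic}, $Q_i$ may be chosen affine: use $(0,0)$ when $P_i=O$. Proposition~\ref{prop:complete} then supplies one slope $s$, one shared witness group $H,U_1,\ldots,U_{10}$, and the structural witnesses for the twenty-one pole predicates, so conditions (i)--(iii) hold. For (iv), every guard in \eqref{eq:guard} is satisfied regardless of $T$. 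Indeed, if $(P_1,\ldots,P_{10})=([k_1]P,\ldots,[k_{10}]P)$, then $n_i=k_i$ for all $i$, because $P$ is non-torsion and the indices are unique. That would give $f(\mathbf k)=f(\mathbf n)=0$, contradicting $\mathbf k\in\mathcal F_T(f)$. Hence every guarded query has a rational certificate.

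\emph{Soundness.} Suppose $f$ has no integer zero, and let $B_f$ be the bound of Proposition~\ref{prop:height}. Take $T\ge B_f$ and suppose, for contradiction, that the $T$-th guarded query has a certificate. Let $n_i$ be the indices with $P_i=n_iP$. Then $f(\mathbf n)\ne0$, since $f$ has no integer zeros. Forgetting the guards leaves an unguarded certificate, so Proposition~\ref{prop:height} gives $|n_i|\le B_f\le T$ for every $i$. Therefore $\mathbf n\in\mathcal F_T(f)$, and the guard for $\mathbf k=\mathbf n$ asserts $(P_1,\ldots,P_{10})\ne([n_1]P,\ldots,[n_{10}]P)$, which is false. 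So this query has no rational certificate.

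The argument needs Proposition~\ref{prop:height} to be uniform in $T$. Otherwise enlarging $T$ could enlarge the bound, and the guard set would never catch up with the possible false certificates. That uniformity is built into the proposition: its proof never uses the guards, and its constants depend only on $f$ and $S_K$, which are fixed once $K=cd$ is chosen. One further point needs checking. In both directions the $n_i$ are determined by the points, so no rational source tuple ever has to be produced or excluded. Beyond these two observations the proof only assembles earlier results.
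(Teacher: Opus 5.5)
Your proof is correct and follows the paper's argument essentially verbatim. Completeness comes from Proposition~\ref{prop:complete}, together with the observation that a failed guard would force $\mathbf n=\mathbf k$ because $P$ is non-torsion, and soundness comes from taking $T\ge B_f$, so that the $T$-uniform bound of Proposition~\ref{prop:height} places the index tuple in $\mathcal F_T(f)$, where its own guard excludes it; your extra remarks on affine preimages under $[m]$ and on the $T$-independence of $B_f$ just make explicit what the paper uses implicitly.
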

\begin{proof}
For an integer zero $\mathbf n$, Proposition~\ref{prop:complete} supplies an unguarded certificate with $P_i=n_iP$. If one tuple guard failed, then $n_iP=k_iP$ for all $i$ with $\mathbf k\in\mathcal F_T(f)$. Since $P$ is nontorsion, $\mathbf n=\mathbf k$, contradicting $f(\mathbf n)=0$ and $f(\mathbf k)\ne0$. Thus every guard holds, for every $T$.

Conversely, if $f$ has no integer zero, every certificate has $\delta\ne0$. Choose an integer $T\ge B_f$. Proposition~\ref{prop:height} places its entire index tuple in the box $[-T,T]^{10}$, and $f(\mathbf n)\ne0$ puts it in $\mathcal F_T(f)$. The corresponding tuple guard excludes that certificate. Hence this query is insoluble.
\end{proof}

\section{Algebraic compilation without extra guard variables}\label{sec:encoding}

We now count free rational parameters and justify elimination of every remaining finite algebraic choice. All constructions in this section are finite algorithms on polynomials. Their degrees and branch counts can be very large, and may depend on $f$ and $T$.

All equations over $F$ in the preceding sections must first be expressed over $\Q$. Write each computed element as $\alpha=a_0+a_1\sqrt2$, where $a_0,a_1$ are rational functions in the rational parameters and finite root choices. Then
\[
 \alpha=0\ \Longleftrightarrow\ a_0=a_1=0,
 \qquad
 \alpha\ne0\ \Longleftrightarrow\ a_0^2-2a_1^2\ne0.
\]
Under this last guard,
\[
 \alpha^{-1}=\frac{a_0-a_1\sqrt2}{a_0^2-2a_1^2}.
\]
Thus equality, inversion, conjugation and every denominator guard in $F$ become rational expressions, equations and inequations without adding variables. Fixed $F$-coefficients are split into their rational components in the same way. Numerical rational coefficients will be cleared at the final step.

\subsection{Two parameters per cyclic point}
Represent a cyclic point by an affine preimage $(x,y)\in E(F)$ under $[m]$, writing $x=u+v\sqrt2$ with $u,v\in\Q$. Since $d_0^2=11243-7950\sqrt2$, put
\begin{align}
 C_0&=u^3+6uv^2-11243u+15900v,\label{eq:Czero}\\
 C_1&=3u^2v+2v^3-11243v+7950u.\label{eq:Cone}
\end{align}
The equation for $y$ is $y^2=C_0+C_1\sqrt2$. It is encoded by the following finite branches.
\begin{enumerate}
\item If $C_1\ne0$, take a rational root $t$ of
$t^4-C_0t^2+C_1^2/2=0$, and set
 $y=t+C_1/(\sqrt2\,t).$
The root is nonzero, and the quartic is separable: a repeated root would force $C_0^2=2C_1^2$, impossible for rational $C_0$ and $C_1$ with $C_1\ne0$.
\item If $C_1=0$ and $C_0\ne0$, use either $t^2-C_0=0$ and $y=t$, or $t^2-C_0/2=0$ and $y=t\sqrt2$. Both polynomials are separable.
\item If $C_0=C_1=0$, use $t=0$ and $y=0$.
\end{enumerate}
These branches are exhaustive, by comparing the two components of $(a+b\sqrt2)^2$. Thus each point has two free parameters and one finite root choice. Addition, fixed multiplication by $m$ or $c_\ast$, conjugation, and the origin cases use finitely many rational charts. On each chart all computed coordinates are rational expressions in the parameters and finite roots, with every denominator recorded as nonzero. 

\subsection{No source-coordinate parameters or guard parameters}
The input $f$ is fixed data. Its local evaluation $f(\mathbf L)$ is a computed rational expression in the point coordinates and their finite ordinate choices. Accordingly no coordinate of a separate tuple $\mathbf a$ is quantified, and no root choice for $f(a_1,\ldots,a_9,X)$ is needed.

The tuple guards \eqref{eq:guard} are finite Boolean combinations in these same computed coordinates. Distributing finite conjunctions and disjunctions gives finitely many branches, each specified by equations and inequations. Their number is unrestricted; all branches use the same free parameters. The construction below filters the equations and inequations inside a finite algebra, and uses one final rational variable for all remaining finite choices and nonzero guards.

\subsection{One coordinate for the first finite choices}
The finite roots just described satisfy independent monic separable polynomials
\[
 p_i(t_i)=0\quad(1\le i\le N)
\]
over the free rational parameters, on each branch. At a fixed rational specialization let $D=\prod_i\deg p_i$. The algebra
\[
 A=\Q[t_1,\ldots,t_N]/(p_1(t_1),\ldots,p_N(t_N))
\]
is reduced and has $D$ complex points. For an integer $b$ take the characteristic polynomial $R_b(Z)$ of multiplication by
\[
 z_b=t_1+bt_2+\cdots+b^{N-1}t_N.
\]
Two distinct tuples collide for at most $N-1$ values of $b$. Thus some
\begin{equation}\label{eq:separatingrange}
 1\le b\le1+(N-1)\f{D(D-1)}2
\end{equation}
separates all points. It is enough to allow this finite list of constants and require $\disc R_b\ne0$. The polynomials $R_b$ are monic of degree $D$.

A rational root of such an $R_b$ recovers a unique tuple. Every automorphism of $\overline\Q/\Q$ fixes the root and permutes the tuples, so it fixes every component of that tuple. Hence all the recovered $t_i$ are rational. Conversely, a rational tuple gives a rational root for any separating $b$. For explicit recovery, the powers $1,z_b,\ldots,z_b^{D-1}$ form a basis of $A$: after extension to $\C$ their evaluation matrix is a Vandermonde matrix on the distinct primitive values. Invert this matrix in any original monomial basis to express each $t_i$ as a polynomial in $z_b$. Matrix inversion records only rational-function coefficients and nonzero pivots. We may now write the first algebraic witness as $Z$, its monic separable polynomial as $R(Z)$, and every computed quantity as a rational function of the free parameters and $Z$.

\subsection{Square conditions give one further finite choice}
By Proposition~\ref{prop:phi}, each of the twenty-one predicates contributes two free structural parameters $e,z$ and two square conditions. Clearing their rational denominators by square factors preserves squareness, with the denominators retained as inequations. Split off zero square inputs. For the remaining $k$ nonzero inputs $A_1,\ldots,A_k$, put 
\begin{align*}
 P_0&=\prod_i A_i,& P_i&=\prod_{j\ne i}A_j,\\
 U_0&=\left(k+\sum_i A_i^2\right)
          \left(P_0^2+\sum_iP_i^2\right),&
 C_i&=P_0^{\,2(k-i)+1}U_0^{\,i-1}.
\end{align*}
Define
\begin{equation}\label{eq:Omega}
 \Omega(\mathbf A,w)=
 \prod_{\boldsymbol\varepsilon\in\{\pm1\}^k}
 \left(1+w\sum_i\varepsilon_iC_i\sqrt{A_i}\right).
\end{equation}
Independent changes of signs show that this is a polynomial with integer coefficients in $\mathbf A$ and $w$. 

In the spirit of the Rational Relation-Combining Theorem of Zhang and Sun \cite[Theorem 1.2]{ZS}, we set
\[
 W_0=\left(k+\sum_i A_i^2\right)\left(1+\sum_i A_i^{-2}\right).
\]
Then $C_i=P_0^{2k-1}W_0^{i-1}$. For nonzero rational inputs, let
\[
 m_0=\min_i\sqrt{|A_i|},\quad M_0=\max_i\sqrt{|A_i|}\ \t{and}\ \rho=\f{M_0}{m_0}.
\]
Expansion gives $W_0>k+\rho^4\ge1+\rho$. In any nonempty signed subsum of the quantities $W_0^{i-1}\sqrt{A_i}$, the last term has larger modulus than the sum of the earlier terms, because
\[
 M_0\sum_{i<j}W_0^{i-1}<m_0W_0^{j-1}.
\]
Consequently, the $2^k$ signed sums in \eqref{eq:Omega} are nonzero and pairwise distinct. It follows that $\Omega$ has degree $2^k$ in $w$ and is separable at every nonzero rational input, even when some $A_i$ are negative.

If all $A_i$ are rational squares, any signed sum gives a rational root of $\Omega$. Conversely, a rational root makes one signed sum rational. Galois automorphisms act on that sum by changing signs of the square roots. Pairwise distinctness forces every square root to be fixed, so every $A_i$ is a rational square. When $k=0$ use $\Omega=w-1$. The square tests are therefore equivalent to one further finite root choice, whose coefficients may depend on $Z$.

\subsection{Merging the dependent choices}
We give the extra step that replaces both $Z$ and $w$ by one rational variable. At a fixed rational specialization of all free parameters, the remaining conditions have the form
\begin{equation}\label{eq:oldsystem}
 R(Z)=0,\quad F_j(Z)=0,\quad G_l(Z)\ne0,
 \quad\Omega(Z,w)=0,
\end{equation}
where $R$ is monic separable. After clearing rational denominators, include the leading coefficient and the discriminant of $\Omega$ as further inequations in $Z$. This removes no successful rational witness by the preceding subsection; it may remove unsuitable complex conjugate fibers. Let $G$ be the product of all inequation polynomials, including every denominator, and use monic gcds to define
\begin{equation}\label{eq:filter}
 R_0=\gcd(R,F_1,\ldots,F_s)\ \ \t{and}\ \ 
 R_1=\f{R_0}{\gcd(R_0,G)}.
\end{equation}
Since $R$ is separable, the roots of $R_1$ are exactly the roots satisfying the equations and inequations. Discard the branch if $R_1=1$.

In $A_1=\Q[Z]/(R_1)$ the leading coefficient of $\Omega$ is a unit; compute its inverse by the extended Euclidean algorithm. Dividing by that unit gives a monic polynomial $p(Z,w)$ whose fibers are all separable of the same degree $e$. Thus
$ \mathcal A=A_1[w]/(p)$
is a reduced algebra of dimension $D_1=e\deg R_1$, with basis $Z^iw^j$ in the usual degree ranges. Its complex points are exactly the remaining pairs $(z,\omega)$. For each integer $b$, form
\begin{equation}\label{eq:pairchar}
 P_b(T)=\det(TI-M_{Z+bw}).
\end{equation}
For a pair of distinct points, a collision gives a nonzero polynomial of degree at most one in $b$. Hence one of
\[
 1\le b\le1+\f{D_1(D_1-1)}2
\]
separates all pairs. Require $\disc P_b\ne0$. The same Galois argument as before shows that a rational root of $P_b$ is equivalent to a rational pair satisfying \eqref{eq:oldsystem}.

The discriminant requirement is essential. For example, $Z^2=w^2=2$ has no rational pair, but the characteristic polynomial for $Z+w$ is $T^2(T^2-8)$ and has a rational root. Its discriminant is zero. At $Z+2w$ the polynomial is $T^4-20T^2+36$, with distinct roots and no rational zero. The separation condition prevents this source of false solutions.

All the preceding operations can be made uniform in the free parameters. At each division step first branch on the actual degree, imposing zero conditions on higher coefficients and a nonzero condition on the leading coefficient. Polynomial remainders then have strictly smaller degree. All initial degree bounds are fixed for the input query, so the branching tree is finite and effective. The same process handles squarefree parts, gcds, unit inverses and matrix pivots. On every final branch the characteristic polynomial is monic of fixed degree with rational-function coefficients in the free parameters; all coefficient denominators and all branch conditions have been recorded. This treats exceptional specializations, rather than assuming generic separability or constant gcd degree.

\subsection{Absorbing every nonzero guard}
For a separating $P_b$ allow $c_0=0$ or 1 and replace it by $P_b(T-c_0)$. For every rational pair at least one of the two corresponding primitive values is nonzero. Thus it suffices to ask for a nonzero rational root of a monic polynomial
\[
 P(T)=T^d+\sum_{j=1}^d\frac{N_j(\mathbf x)}{B(\mathbf x)}T^{d-j},
\]
where $N_j,B\in\Q[\mathbf x]$ and $\mathbf x$ is the tuple of free parameters. Take $B$ to be a common polynomial denominator and let $H$ be the product of the numerators of all remaining inequations. Define
\begin{equation}\label{eq:reciprocal}
 \Psi(\mathbf x,U)=1+\sum_{j=1}^d
 N_j(\mathbf x)B(\mathbf x)^{2j-1}H(\mathbf x)^jU^j.
\end{equation}
If $BH=0$, this is identically 1. If $BH\ne0$, putting $S=B^2H$ gives, for $U\ne0$, the identity
\[
 \Psi(\mathbf x,U)=(SU)^dP(1/(SU)).
\]
Since $\Psi(\mathbf x,0)=1$, its rational roots correspond exactly to nonzero rational roots of $P$. In particular it enforces all inequations without another inverse variable. The exponent $2j-1$ is positive; simply clearing denominators without this factor would not justify the assertion on $B=0$.

If $E_1(\mathbf x)=\cdots=E_r(\mathbf x)=0$ are the polynomial equations of a branch, use
\[
 \Psi(\mathbf x,U)^2+\sum_{i=1}^rE_i(\mathbf x)^2=0.
\]
Over $\Q$ this is the desired conjunction. Multiply these branch polynomials over all finite choices, using the same $U$, to express their disjunction. Clearing numerical rational coefficients gives one integer-coefficient polynomial. An empty disjunction is represented by the constant polynomial 1. There is no selector variable and no unrecorded nonzero condition.

\section{Variable count and proof of Theorem~\ref{thm:main}}\label{sec:count}
We use the undecidability of $\HTP_{10}(\Z)$ obtained by Sun \cite{Sun10}.
There are ten root points and one group of eleven local points, hence 21 cyclic points and 42 free rational coordinate components. All height coordinates share one slope. Each of the twenty-one pole predicates has two structural witnesses and two square conditions. Section~\ref{sec:encoding} combines the finite ordinate choices, the 42 square tests and every nonzero guard into one final variable. 
\begin{center}
\begin{tabular}{@{}p{4.25in}r@{}}
\toprule
Free rational unknowns & Number\\\midrule
Two components for each of 21 elliptic preimages &42\\
One common slope &1\\
Two structural witnesses for each of 21 pole predicates &42\\
One final algebraic witness, including all guards &1\\\midrule
Total &\textbf{86}\\\bottomrule
\end{tabular}
\end{center}
The $A_r$ and $A_r^{-5}$ are computed rational expressions on finite branches. The integer indices, the temporary parameters $u,v,w,h$, the at most 85 prime positions, and the cutoff $L$ occur only in the completeness proof. None is an additional rational unknown in the compiled polynomial. The fixed arithmetic constants and finite multiplier choices likewise add no unknowns.

For each input polynomial $f$ and guard bound $T$, the construction is effective and gives one integer-coefficient polynomial in at most 86 rational variables. An integer zero of $f$ supplies a rational certificate for every $T$. If $f$ has no integer zero, the height bound and finite guards make some query insoluble.  As $\HTP_{10}(\Z)$ is undecidable by  \cite[Corollary 1.1]{Sun10},
$\HTP_{86}(\Q)$ must be undecidable.

The essential sharing assertions have been proved rather than inserted as extra hypotheses: Theorem~\ref{an:thm:patterns} supplies simultaneous order compatibility, and Proposition~\ref{prop:complete} supplies completeness after sharing the group and slope. Structural witnesses for distinct pole inputs remain independent. The number 86 is an upper bound supplied by this construction.

\appendix
\section{A geometric sieve for the actual order expressions}\label{an:app:large}
The general quadric theorem of Browning and Heath-Brown \cite[Theorem 1.2]{BHB} is stated for homogeneous auxiliary forms. The expressions $G_\eps,H_\eta$ are not homogeneous. Simply homogenizing them introduces another coordinate and does not give the following five-variable estimate. We instead verify directly the fibre conditions needed by their lattice method.

\begin{theorem}\label{an:thm:tail}
For $B,L\ge2$, let $N_{\kappa,\eps,\eta}(B,L)$ count integer vectors of supremum norm at most $B$ on \eqref{an:eq:quadric} for which some prime $\ell>L$ divides both $G_\eps$ and $H_\eta$. For every $\alpha>0$, we have
\begin{equation}\label{an:eq:tail}
 N_{\kappa,\eps,\eta}(B,L)
 \ll_{\kappa,\alpha}
 \frac{B^{3+\alpha}}{L\log L}+B^{11/4+\alpha}.
\end{equation}
\end{theorem}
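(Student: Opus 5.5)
We count vectors $\mathbf x=(a,b,c,d,w)$ with $\|\mathbf x\|_\infty\le B$, $Q_\kappa(\mathbf x)=0$, and a prime $\ell>L$ dividing both $G_\eps$ and $H_\eta$. Since $|G_\eps|,|H_\eta|\ll B^2$, such $\ell$ satisfy $\ell\ll B^2$. We split the primes at a threshold $\ell\le B^{1/4}$ (small range) versus $\ell>B^{1/4}$ (large range), and follow the two elementary inputs of Browning--Heath-Brown \cite[Lemmas 2.1 and 4.1]{BHB}, replacing their homogeneous fibre hypothesis by direct fibre counts for our inhomogeneous $G_\eps,H_\eta$.

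\emph{Small primes.} For $L<\ell\le B^{1/4}$ we bound each $\ell$ separately. Fix the residue class of $\mathbf x$ modulo $\ell$ in the collision set $\Omega_\ell$ of Section~\ref{an:sec:counts}, which has at most $4\ell^2$ elements, by fixing $(b,d,w)\bmod\ell$ on each of the four split branches. Then count points of the quadric in that class. The fibre count goes as follows. Fix $(c,d,w)$; then $a^2+b^2=c^2+d^2+\kappa w^2$ gives $O(B^\alpha)$ pairs $(a,b)$ by the divisor bound in $\Z[i]$. Imposing $a\equiv a_0$, $b\equiv b_0\pmod\ell$ on a circle of radius $\asymp B$ costs a factor $\ell^{-1}$ up to $O(1)$, since lattice points on a circle in a fixed residue class modulo $\ell$ number $\ll B^\alpha(1+B/\ell)$ (they lie on an arc structure; alternatively use that $(a+ib)$ divides a Gaussian integer and count divisors in residue classes). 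The congruence on $(c,d,w)$ costs $\ell^{-3}$ on a box of side $B\ge\ell$. Altogether one residue class contributes $\ll B^{3+\alpha}\ell^{-4}+B^{2+\alpha}\ell^{-2}$. Multiplying by $4\ell^2$ and summing gives
\[
 \sum_{L<\ell\le B^{1/4}}\bigl(B^{3+\alpha}\ell^{-2}+B^{2+\alpha}\bigr)\ll \frac{B^{3+\alpha}}{L\log L}+B^{9/4+\alpha}.
\]

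\emph{Large primes.} This is the main obstacle, and here we mimic \cite[Lemma 4.1]{BHB}. If $\ell\mid G_\eps$ with $\ell\equiv1\pmod 4$ (the inert case forces $a+\eps\equiv b\equiv0$ and is handled like the small range, with a lattice of index $\ell^2$ in $(a,b)$), then $a+\eps\equiv\lambda b\pmod\ell$ with $\lambda^2\equiv-1$. The pair $(a+\eps,b)$ then lies in one of two lattices $\Lambda_{\ell,\pm}$ of determinant $\ell$, and similarly $(c+\eta,d)$ lies in a lattice of determinant $\ell$. Rather than summing over $\ell$, we reverse the roles. Write $a+\eps+ib=\mathfrak p\,\gamma$ and $c+\eta+id=\mathfrak p'\gamma'$ with $\Nm\mathfrak p=\Nm\mathfrak p'=\ell$, so that $|\gamma|,|\gamma'|\ll B/\sqrt\ell\le B^{7/8}$. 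For fixed $(\gamma,\gamma')$ and the pair of Gaussian primes of norm $\ell$, the quadric becomes a condition on $w$ with $O(1)$ solutions. However, $\mathfrak p,\mathfrak p'$ range over $\asymp B^2/\log B$ choices, which is too many. Instead we fix $(\gamma,\gamma',w)$. The quadric then reads
\[
 \Nm(\mathfrak p\gamma-\eps)-\Nm(\mathfrak p'\gamma'-\eta)=\kappa w^2,
\]
and together with $\Nm\mathfrak p=\Nm\mathfrak p'$ this defines a curve in the four real coordinates of $(\mathfrak p,\mathfrak p')$. Since $\Nm\mathfrak p$ has $O(B^\alpha)$ representations, we expect $O(B^{1+\alpha})$ lattice points in a box of size $\sqrt{\ell}$, using Bombieri--Pila for the resulting conic pencil or more simply fixing $\mathfrak p$ and solving a degree-two equation in $\mathfrak p'$ on the circle $\Nm\mathfrak p'=\ell$.

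This yields roughly $(B^2/\ell)^2\cdot B\cdot\sqrt\ell\cdot B^\alpha$ per dyadic range of $\ell$. Dyadic summation over $\ell\ge B^{1/4}$, taken only at the endpoint, is not good enough, so the plan is to balance by choosing which variable to fix according to whether $\ell\le B^{3/2}$ or $\ell>B^{3/2}$. In the top range the cofactors $\gamma,\gamma'$ have norm $\ll B^{1/2}$, so they are few, and the count becomes $\ll B^{1/2+1/2}\cdot B^{1+\alpha}\cdot B^{3/4}$. I expect the bookkeeping of this lattice/cofactor count to give the exponent $11/4+\alpha$ exactly at the crossover $\ell\approx B^{1/2}$ to $B^{3/4}$, which is where the error term in \eqref{an:eq:tail} comes from. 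Checking that the inhomogeneous shifts $\eps,\eta$ do not destroy the fibre bound (the degenerate fibres where the equation in $w$ is identically satisfied, which happen only when $\gamma,\gamma'$ are tied by $\Nm$-relations of positive codimension) is the step I expect to need the most care.
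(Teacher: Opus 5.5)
\textbf{Small primes.} Your small-prime estimate does not follow from the argument you give. With $(c,d,w)$ fixed, the divisor bound gives $r_2(n)\ll B^{\alpha}$ points $(a,b)$ on the circle. Imposing $(a,b)\equiv(a_0,b_0)\pmod\ell$ yields no pointwise saving: all of those points may lie in the prescribed class. The bound $B^\alpha(1+B/\ell)$ you quote is even weaker than the trivial one. So a single residue class contributes $\ll (B/\ell)^3B^{\alpha}$, not $B^{3+\alpha}\ell^{-4}$. Multiplying by the $\le4\ell^2$ collision classes gives $B^{3+\alpha}/\ell$ per prime, and $\sum_{L<\ell\le B^{1/4}}B^{3+\alpha}/\ell$ is not $O(B^{3+\alpha}/(L\log L))$. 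The missing factor $\ell^{-1}$ holds only on average over $(c,d,w)$, and you give no argument for that average.

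\textbf{Large primes.} This part is more serious. When $L\ge B^{1/4}$ your small range is empty, so the whole theorem rests here, and what you give is a heuristic rather than a proof. Fixing $(\gamma,\gamma',w)$ already costs $\asymp B^5\ell^{-2}$ choices, which exceeds the trivial $B^{3+\alpha}$ for all $\ell<B$. Your resulting bound $B^{5+\alpha}\ell^{-3/2}$ is acceptable only for $\ell\ge B^{3/2}$. Nothing is offered for $B^{1/4}<\ell<B^{3/2}$, the degenerate fibres are left open, and the exponent $11/4$ is only ``expected''. You announce the two Browning--Heath-Brown inputs but never use their box sieve, and prime-by-prime counting cannot cover primes of size a power of $B$. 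What is missing is a reduction that lets a geometric sieve treat all $\ell>L$ at once.

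\textbf{The paper's reduction.} Reduce to $\eps=\eta=1$ by sign changes and substitute $x_0=a+c$, $x_1=a-c$, $x=b-d$, $z=b+d$. The quadric becomes $x_0x_1=R(x,z,w):=\kappa w^2-xz$. Primes dividing both $x_0$ and $x_1$ are removed by the divisor bound. In the chart where $t=x_1$ is fixed (the chart $t=x_0$ is analogous), one has $G_+-H_+=P_t:=\kappa w^2+2t$. Writing $K_t=4t^2G_+$ as a polynomial in $(x,z,w)$ and $T_t=(t^2+x^2)(t^2+z^2)$, the identity $K_t-T_t=P_t(P_t+2t^2-2xz)$ holds. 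Hence $\ell$ divides the coprime pair $P_t(w)$, $T_t(x,z)$ in three variables.

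The remaining condition $t\mid R$ is weakened to $q\mid R$ and covered by $O(9^{\omega(q)}q)$ lattices of determinant $q^2$ (Browning--Heath-Brown, Lemma 4.1). On each lattice their polynomial box sieve (Lemma 2.1) bounds all common prime divisors $\ell>L$ simultaneously by $V\log(VH)/(L\log L)+V\log(VH)/B_{\min}$. The term $B^{11/4+\alpha}$ arises from balancing two contributions at $T=B^{3/4}$: the lattices whose largest successive minimum exceeds $T$, which give $B^{5+2\alpha}T^{-3}$, and the box-sieve error $B^{2+3\alpha}T$. It does not come from a crossover in the size of $\ell$.
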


We first specify the two published inputs to the proof. The polynomial box sieve \cite[Lemma 2.1]{BHB} bounds common prime divisors greater than $L$ of coprime, bounded-degree integral polynomials in a box by
\begin{equation}\label{an:eq:box}
 \ll \frac{V\log(VH)}{L\log L}
       +\frac{V\log(VH)}{B_{\min}},
\end{equation}
where the box is $\prod_i[-B_i,B_i]$ with $B_i\ge2$, $V=\prod_i B_i$, $B_{\min}=\min_i B_i$, and $H\ge2$ bounds coefficient heights. Here coprimality is in the integral polynomial ring, so a common constant prime factor must also be removed. The second input, \cite[Lemma 4.1]{BHB}, covers solutions of a rank-three quadratic congruence modulo a squarefree good modulus $q$ by $O(9^{\omega(q)}q)$ lattices of determinant $q^2$. The number whose largest successive minimum exceeds $T$ is $O(9^{\omega(q)}q^3T^{-3})$. Here $\omega(q)$ denotes the number of distinct prime divisors of $q$, rather than the slow threshold function of Section~\ref{an:sec:transfer}. A good prime is one at which the reduction of the quadratic form still has rank at least three. The successive minima are taken with respect to the Euclidean unit ball: the $i$th is the least radius containing $i$ independent lattice vectors. Each covering lattice contains $q\Z^3$, so all its successive minima are at most $q$.

\begin{proof}[Proof of Theorem~\ref{an:thm:tail}]
Sign changes in $a,c$ reduce to $\eps=\eta=1$. Make the integral linear change
\[
 x_0=a+c,\quad x_1=a-c,\quad x=b-d,\quad z=b+d.
\]
The equation becomes
\begin{equation}\label{an:eq:hyperbolic}
 x_0x_1=R(x,z,w):=\kappa w^2-xz.
\end{equation}
The new box has side parameters at most $2B$. Enlarging to it, and absorbing this factor in the constants, can only increase the count.

\smallskip\noindent
\emph{Primes dividing both hyperbolic coordinates.}
Suppose $\ell\mid x_0,x_1$. Then $R=\ell^2h$. Uniformly in a fixed integer $v$ in the range $|v|\ll_\kappa B^2$, the equation $R=v$ has $O_{\kappa,\alpha}(B^{1+\alpha})$ solutions in the box: for each $w$, count divisors in $xz=\kappa w^2-v$. We use the elementary bound $\tau(m)\ll_\varepsilon |m|^\varepsilon$ for $m\ne0$, where $\tau(m)$ is the number of positive divisors of $|m|$. It follows by multiplying the prime-power factors $a+1$: for all sufficiently large primes $p$, $a+1\le p^{\epsilon a}$, and each of the finitely many smaller primes contributes a bounded extra factor. Its right side is zero for at most two $w$, and these cases contribute only $O(B)$. For nonzero $R$, the number of pairs $x_0x_1=R$ is $O_\alpha(B^\alpha)$. Summing over $\ell>L$ and the $O_\kappa(B^2/\ell^2)$ possible nonzero $h$ gives
\[
 O_{\kappa,\alpha}\left(\frac{B^{3+2\alpha}}{L\log L}\right).
\]
The case $R=0$ contributes $O_{\kappa,\alpha}(B^{2+\alpha})$. We may therefore treat the two charts $\ell\nmid x_1$ and $\ell\nmid x_0$ separately.

\smallskip\noindent
\emph{The chart with $t=a-c$.}
Fix $t\ne0$ and put $x_0=R/t$. A common divisor $\ell\nmid2t$ of $G_+$ and $H_+$ divides
\begin{equation}\label{an:eq:chartone}
 P_t=\kappa w^2+2t\ \ \t{and}\ \ T_t=(t^2+x^2)(t^2+z^2).
\end{equation}
To check this explicitly, $G_+-H_+=\kappa w^2+2t$ and
\[
 4t^2G_+=K_t:=(\kappa w^2-xz+t^2+2t)^2+t^2(x+z)^2.
\]
The polynomial identity
\begin{equation}\label{an:eq:identity}
 K_t-T_t=P_t(P_t+2t^2-2xz)
\end{equation}
proves the claim. The two polynomials in \eqref{an:eq:chartone} are coprime over $\Q$: one depends only on $w$, the other only on $x,z$. Also $T_t$ has coefficient 1 at $x^2z^2$, so the pair has no common constant prime factor.

\smallskip\noindent
\emph{The chart with $t=a+c$.}
Now $a-c=R/t$. The corresponding necessary divisibilities are
\begin{equation}\label{an:eq:charttwo}
 P'_t=\kappa(t+2)w^2-2xz\ \t{and}\ 
 K_t=(\kappa w^2-xz+t^2+2t)^2+t^2(x+z)^2,
\end{equation}
since $P'_t=t(G_+-H_+)$ and $K_t=4t^2G_+$. If $t\ne-2$, $P'_t$ is an irreducible rank-three quadratic form, whereas $K_t(0,0,0)=t^2(t+2)^2\ne0$. Hence they have no common polynomial factor. For $t=-2$, $P'_t=-2xz$, and neither $x$ nor $z$ divides $K_t$, as is seen by setting that variable to zero. Finally, the coefficient $-2$ in $P'_t$ rules out common constant odd primes.

\smallskip\noindent
\emph{The lattice summation.}
In either chart, $t\mid R(x,z,w)$. Let $q$ be the product of the good primes for $R$ which divide $t$; thus $q$ is squarefree and $q\ll B$. For each fixed $q$, the number of possible nonzero $t$ with $|t|\ll B$ is $O_{\kappa,\alpha}(B^\alpha)$. To see the uniformity in $q$, let $S_R$ be the finite set of bad primes. Since $t/q$ is supported on primes dividing $q$ or belonging to $S_R$, for any $\alpha>0$ a Rankin bound gives
\[
 \#\{t:0<|t|\le CB,\ \operatorname{rad}_{\rm good}(t)=q\}
 \le 2(CB/q)^\alpha
       \prod_{p\mid q\ {\rm or}\ p\in S_R}(1-p^{-\alpha})^{-1}.
\]
Here $\operatorname{rad}_{\rm good}(t)$ is the product of the good primes dividing $t$. The product is $O_{\kappa,\alpha}(q^\alpha)$: for all large primes its individual factor is at most $p^\alpha$, and the remaining finitely many factors have bounded product. This proves the asserted $O_{\kappa,\alpha}(B^\alpha)$ estimate.

Weaken $t\mid R$ to $q\mid R$ and use the lattice cover above. For a covering lattice $\Lambda$, let $\lambda_1\le\lambda_2\le\lambda_3$ be its successive minima. Its number of vectors in the box is
$ \ll B^3/q^2,$
because $\lambda_i\le q\ll B$ and Minkowski's second theorem gives $\lambda_1\lambda_2\lambda_3\asymp\det\Lambda=q^2$. The lattice-point bound and the choice of a basis of lengths $O(\lambda_i)$ below are the standard geometry-of-numbers estimates also used in \cite[Section 4]{BHB}. The contribution of lattices with $\lambda_3>T$ is consequently
\begin{equation}\label{an:eq:largemin}
 \ll_{\kappa,\alpha}
 B^\alpha\sum_{q\ll B}9^{\omega(q)}q^3T^{-3}\frac{B^3}{q^2}
 \ll_{\kappa,\alpha} B^{5+2\alpha}T^{-3}.
\end{equation}

For the other lattices choose an integral basis matrix $E$ with column lengths $O(\lambda_i)$. Its determinant is $q^2\asymp\lambda_1\lambda_2\lambda_3$. The cofactor formula bounds the $i$th row of $E^{-1}$ by $O(1/\lambda_i)$. Hence $(x,z,w)^t=E\mathbf y$ is contained in a coordinate box with side parameters chosen as fixed sufficiently large multiples of $B/\lambda_i$. Its volume parameter is $O(B^3/q^2)$ and its smallest side is $\gg B/T$. These sides are at least two in the range where the estimate is needed; bounded $B$ is absorbed into the final constant. The coefficient heights after substitution are bounded by a fixed power of $B$.

It remains to check that \eqref{an:eq:box} applies after substitution; this is precisely where homogeneity cannot simply be assumed. The explicit coprimality checks in the two charts persist under the invertible rational substitution $E$. If a common constant prime $\ell\nmid2t$ appeared after substitution, then $\det E=q^2$ would be invertible modulo $\ell$. The original pair would consequently vanish identically modulo $\ell$, contradicting the coefficient checks above. Any common content supported on $2t$ can be removed, since the counted primes do not divide $2t$.

Thus \eqref{an:eq:box} bounds each small-minimum lattice contribution by
\[
 \ll_{\kappa}
 \frac{B^3\log B}{q^2L\log L}
 +\frac{B^2T\log B}{q^2}.
\]
Summing over $O(9^{\omega(q)}q)$ lattices, the $O(B^\alpha)$ possible $t$, and $q\ll B$, and absorbing logarithms into arbitrarily small powers, gives
\begin{equation}\label{an:eq:smallmin}
 \ll_{\kappa,\alpha}
 \frac{B^{3+3\alpha}}{L\log L}+B^{2+3\alpha}T.
\end{equation}
Take $T=B^{3/4}$. Equations \eqref{an:eq:largemin}--\eqref{an:eq:smallmin} and the first case give \eqref{an:eq:tail} after renaming the arbitrary exponent. The possible prime 2 is absent because $L\ge2$.
\end{proof}

\begin{corollary}\label{an:cor:weightedlarge}
Fix $0<\delta<1/4$ and $k\ge0$. On the set in Theorem~\ref{an:thm:tail}, attach any nonnegative weights bounded by $(\log B)^k$. The weighted contribution from a common prime divisor greater than $B^\delta$ is $O_{\kappa,\delta,k}(B^{3-\delta/2}).$
In particular the conclusion allows restricting to a prime pattern and multiplying by its fixed number of prime logarithmic weights.
\end{corollary}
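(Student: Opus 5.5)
The corollary follows from Theorem~\ref{an:thm:tail} by a dyadic decomposition in the size of the common prime divisor. Weights only cost a factor $(\log B)^k$, which is absorbed into $B^{\alpha}$.

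First, each counted vector has supremum norm $\ll B$. Hence $G_\eps,H_\eta\ll B^2$, so any prime dividing $G_\eps$ is at most $O(B^2)$. The only exception is $G_\eps=0$, which forces $a=-\eps$, $b=0$; since $Q_\kappa=0$ then leaves a two-dimensional family in $(c,d,w)$, such vectors contribute at most $O_\kappa(B^{2+\alpha})$ and can be discarded. Thus a relevant prime $\ell$ satisfies $B^\delta<\ell\ll B^2$. The count of vectors having some common prime divisor $\ell>B^\delta$ is exactly $N_{\kappa,\eps,\eta}(B,B^\delta)$; no dyadic sum is even needed, since Theorem~\ref{an:thm:tail} already counts vectors with \emph{some} prime $\ell>L$. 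Applying \eqref{an:eq:tail} with $L=B^\delta$ gives
\[
 N_{\kappa,\eps,\eta}(B,B^\delta)\ll_{\kappa,\alpha}B^{3+\alpha-\delta}+B^{11/4+\alpha}.
\]

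Next, the nonnegative weights are bounded by $(\log B)^k\ll_{k,\alpha}B^{\alpha}$, so the weighted contribution is at most $B^\alpha$ times the count. Choose $\alpha=\delta/8$. The first term becomes $B^{3-3\delta/4}\le B^{3-\delta/2}$. The second becomes $B^{11/4+\delta/4}$, which is at most $B^{3-\delta/2}$ precisely when $3\delta/4\le1/4$, i.e.\ $\delta\le1/3$. This holds since $\delta<1/4$. Summing over the finitely many sign choices $\eps,\eta$ gives $O_{\kappa,\delta,k}(B^{3-\delta/2})$.

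For the final remark, the prime-pattern restriction only discards vectors, so the upper bound persists. A pattern with $k$ positions carries weight $\prod_j\log p_j\ll(\log B)^k$, because all $p_j\ll B^2$. The injection from patterns to quadric vectors described in Proposition~\ref{an:prop:fulltail} keeps multiplicities bounded. The only point needing care is that the vectors lie in a box of side $O(B)$ rather than $B$; this rescales $B$ by a constant and is harmless. There is no genuine obstacle: the one thing to verify is the exponent comparison $11/4+\alpha<3-\delta/2$, which is where the hypothesis $\delta<1/4$ enters.
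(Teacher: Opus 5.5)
Your proposal is correct and is essentially the paper's argument: apply \eqref{an:eq:tail} with $L=B^\delta$ and a small $\alpha$, absorb the weight bound $(\log B)^k$ into a small power of $B$ (you use $B^{\delta/8}$, the paper uses $B^{\delta/4}$), check that both resulting exponents are at most $3-\delta/2$, and note that restricting to prime patterns only decreases the bound. Your preliminary treatment of the case $G_\eps=0$ and the mention of a dyadic decomposition are unnecessary, since $N_{\kappa,\eps,\eta}(B,L)$ already counts every vector with some common prime divisor exceeding $L$.
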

\begin{proof}
Use \eqref{an:eq:tail} with $L=B^\delta$ and a sufficiently small $\alpha$, then absorb the fixed logarithmic power into $B^{\delta/4}$. Restricting the set only decreases this upper bound.
\end{proof}

The self-gcd tail is simpler. An odd prime dividing both orders in \eqref{an:eq:orders} divides $a$ and $b^2+1$. Apply \eqref{an:eq:box} to these two coprime polynomials in the three variables $a,b,w$. The number of such triples with an obstruction prime greater than $L$ is
\[
 \ll \frac{B^3\log B}{L\log L}+B^2\log B.
\]
This likewise gives a power saving at $L=B^\delta$, including fixed prime logarithmic weights.

\end{document}